\newtheorem{remark}[theorem]{Remark}
\def\ep{\varepsilon}
\def\th{\theta}
\def\la{\lambda}
\def\rh{\varrho}
\def\si{\sigma}
\def\ph{\varphi}
\def\om{\omega}
\def\De{\Delta}
\def\Om{\Omega}
\let\on=\operatorname
\def\inv{^{-1}}
\def\x{\times}
\def\p{\partial}
\def\R{{\mathbb R}}
\def\Imm{\on{Imm}}
\newcommand{\ud}{\,\mathrm{d}}
\newcommand{\ip}[2]{\langle #1,#2 \rangle}
\title{A Numerical Framework for Sobolev Metrics on the Space of Curves\thanks{Parts of this work have been published in the conference proceedings \cite{BBHM2015a,BBHM2015b}. All authors were partially supported by the Erwin Schr\"{o}dinger Institute programme: Infinite-Dimensional Riemannian Geometry with Applications to Image Matching and Shape Analysis. }} 
\author{Martin Bauer\footnotemark[2] \and Martins Bruveris\footnotemark[3] \and Philipp Harms\footnotemark[4] \and Jakob M{\o}ller-Andersen\footnotemark[5]}
\begin{document}
\maketitle
\newcommand{\slugmaster}{}

\renewcommand{\thefootnote}{\fnsymbol{footnote}}

\footnotetext[2]{Department of Mathematics, Florida State University and Faculty of Mathematics, TU Wien, \url{bauer@math.fsu.edu}. Supported by the FWF project ``Geometry of shape spaces and related infinite dimensional spaces'' (P246251).}
\footnotetext[3]{Department of Mathematics, Brunel University London,  \url{martins.bruveris@brunel.ac.uk}. Supported by the BRIEF award from Brunel University London.}
\footnotetext[4]{Department of Mathematics, Freiburg University, \url{philipp.harms@stochastik.uni-freiburg.de }}
\footnotetext[5]{Department of Applied Mathematics and Computer Science, Technical University of Denmark, \url{jakmo@dtu.dk}}

\renewcommand{\thefootnote}{\arabic{footnote}}

\begin{abstract}
Statistical shape analysis can be done in a Riemannian framework by endowing the set of shapes with a Riemannian metric. Sobolev metrics of order two and higher on shape spaces of parametrized or unparametrized curves have several desirable properties not present in lower order metrics, but their discretization is still largely missing. In this paper, we present algorithms to numerically solve the geodesic initial and boundary value problems for these metrics. The combination of these algorithms enables one to compute Karcher means in a Riemannian gradient-based optimization scheme and perform principal component analysis and clustering. Our framework is sufficiently general to be applicable to a wide class of metrics. We demonstrate the effectiveness of our approach by analyzing a collection of shapes representing HeLa cell nuclei.
\end{abstract}

\begin{keywords}shape analysis, shape registration, Sobolev metric, geodesics, Karcher mean, B-splines\end{keywords}

\begin{AMS}58B20, 58E50 (Primary); 49M25, 68U05 (Secondary)\end{AMS}

\pagestyle{myheadings}
\thispagestyle{plain}
\markboth{BAUER, BRUVERIS, HARMS, M{\O}LLER-ANDERSEN}{A NUMERICAL FRAMEWORK FOR SOBOLEV METRICS ON THE SPACE OF CURVES}

\section{Introduction}
The comparison and analysis of geometric shapes plays a central role in many applications. 
A particularly important class of shapes is the space of curves, 
which is used to model applied problems in medical imaging \cite{XKS2014,Younes2012}, object tracking \cite{Sundaramoorthi2011,Sundaramoorthi2008}, computer animation \cite{Esl2014b,Esl2014}, 
speech recognition \cite{su2014b}, biology \cite{Laga2014,su2014}, and many other fields \cite{Bauer2014,Krim2006}. 

In this article we consider the space $\Imm(S^1,\R^d)$ of closed, regular (or immersed) curves in $\R^d$ as well as some quotients of this space by reparametrizations and Euclidean motions.
These spaces of shapes are inherently nonlinear. To make standard methods of statistical analysis applicable, one can linearize the space locally around each shape. This can be achieved by introducing a Riemannian structure, which describes both the global nonlinearity of the space as well as its local linearity. Over the past decade Riemannian shape analysis has become an active area of research in pure and applied mathematics. Driven by applications, a variety of different Riemannian metrics has been used. 

An important class of metrics are Sobolev metrics. These metrics can be defined initially on the space $\Imm(S^1,\R^d)$ and then induced on quotients of this space by requiring the projections to be Riemannian submersions (see Def.~\ref{def:sobolev_metric} and Thm.~\ref{thm:quotient_metric}). Recently Sobolev metrics of order two were shown to possess much nicer properties than metrics of lower order: the geodesic distance is non-degenerate, the geodesic equation is globally well-posed, any two curves in the same connected component can be connected by a minimizing geodesic, the metric completion consists of all $H^2$-immersions, and the metric extends to a strong Riemannian metric on the metric completion \cite{Bruveris2014b_preprint, Bruveris2014}. 

Numerical methods for the statistical analysis of shapes under second order metrics are, however, still largely missing. This is in contrast to first order metrics, where isometries to simpler spaces led to explicit formulas for geodesics under many parameter configurations of the metric \cite{Bauer2014b, Klassen2004, Jermyn2011, Michor2008a}. For certain $H^2$-metrics an analogous approach was developed in \cite{Bauer2014c}. Moreover, the geodesic boundary value problem under second order Finsler metrics on the space of $BV^2$-curves was implemented numerically in \cite{Vialard2014_preprint}. For general second order Sobolev metrics on spaces of unparametrized curves a numerical framework is, however, still lacking. This is the topic of this paper.  

We present a numerical implementation of the initial and boundary value problems for geodesics under second order Sobolev metrics.\footnote{Our code can be downloaded from \url{https://github.com/h2metrics/h2metrics.git}.} Our implementation is based on a discretization of the Riemannian energy functional using B-splines. The boundary value problem for geodesics is solved by a standard minimization procedure on the set of discretized paths and the initial value problem by discrete geodesic calculus  \cite{Rumpf2014}. Our approach is general in that it allows to factor out reparametrizations and rigid transformations. Moreover, it involves no restriction on the parameters of the metric and could be applied to other, higher-order metrics, as well. 

In future work our framework could be applied to other spaces of mappings like manifold-valued curves, embedded surfaces, or more general spaces of immersions (see \cite{Bauer2011a, Bauer2011b} for details and \cite{Bauer2014} for a general overview). 
\section{Sobolev metrics on spaces of curves}

\subsection{Notation}
The space of smooth, regular curves with values in $\R^d$ is
\begin{align}
\Imm(S^1,\R^d)=\left\{c\in C^{\infty}(S^1,\R^d)\colon \forall \th \in S^1, c'(\th) \neq 0 \right\}\,,
\end{align}
where $\Imm$ stands for \emph{immersions}. We call such curves parametrized to distinguish them from unparametrized curves defined in Sect.~\ref{sec:unparametrized}. The space $\Imm(S^1,\R^d)$ is an open subset of the Fr\'echet space $C^\infty(S^1,\R^d)$ and therefore can be considered as a Fr\'echet manifold. Its tangent space $T_c\Imm(S^1,\R^d)$ at any curve $c$ is the vector space $C^\infty(S^1,\R^d)$ itself. We denote the Euclidean inner product on $\R^d$ by $\langle\cdot,\cdot\rangle$. Differentiation is sometimes denoted using subscripts as in $c_\theta=\partial_\theta c=c'$. Moreover, for any fixed curve $c$, we denote differentiation and integration with respect to arc length by $D_s=\partial_{\theta}/|c_\theta|$ and $\mathrm ds=|c_\theta|\ud \theta$, respectively. A path of curves is a mapping $c\colon\![0,1]\to\Imm(S^1,\R^d)$; its velocity is denoted by $c_t=\partial_t c=\dot c$.

\subsection{Parametrized curves}
\label{sec:parametrized}

In this article we study the following class of weak Riemannian metrics on $\Imm(S^1,\R^d)$.

\begin{definition}
\label{def:sobolev_metric}
A \emph{second order Sobolev metric with constant coefficients} on $\Imm(S^1,\R^d)$ is a weak Riemannian metric of the form
\begin{equation}\label{eq:sobolev_metric}
G_c(h,k) = \int_{S^1} a_0\langle h,k \rangle+a_1\langle D_s h,D_s k \rangle+a_2\langle D_s^2 h,D_s^2 k \rangle \ud s \,,
\end{equation}
where $h,k \in T_c\Imm(S^1,\R^d)$, and $a_j \in \R$ are constants with $a_0, a_2 > 0$ and $a_1 \geq 0$. If $a_2=0$ and $a_1 > 0$ it is a first order metric and if $a_1=a_2=0$ it is a zero order or $L^2$-metric.
\end{definition}

Note that the symbols $D_s$ and $\ud s$ hide the dependency of the Riemannian metric on the base point $c$. Expressing derivatives in terms of $\theta$ instead of arc length, one has
\begin{equation}\label{def:sobolev_metric2}
\begin{aligned}
 G_c(h,k)&= \int_0^{2\pi} a_0 |c'| \langle h, k \rangle + \frac{a_1}{|c'|} \langle h', k' \rangle
+ \frac{a_2}{|c'|^7} \langle c', c'' \rangle^2 \langle h', k' \rangle \\
&\qquad
- \frac{a_2}{|c'|^5} \langle c', c'' \rangle \big( \langle h', k'' \rangle + \langle h'', k' \rangle \big) +
\frac{a_2}{|c'|^3} \langle h'', k'' \rangle
\ud \th\,.
\end{aligned}
\end{equation}

In the Riemannian setting the length of a path $c\colon[0,1]\to\Imm(S^1,\R^d)$ is defined as
\begin{align}
L(c) = \int_0^1 \sqrt{G_{c(t)}(c_t(t),c_t(t))} \ud t\,,
\end{align}
and the geodesic distance between two curves $c_0,c_1\in\Imm(S^1,\R^d)$ is the infimum of the lengths of all paths connecting these curves, i.e., 
\[
\on{dist}(c_0, c_1) = \inf_c\left\{L(c)\colon c(0)=c_0,\, c(1)=c_1\right\}\,.
\]
On finite-dimensional manifolds the topology induced by the geodesic distance coincides with the manifold topology by the Hopf--Rinow theorem. On infinite-dimensional manifolds with weak Riemannian metrics this is not true anymore. For example, the geodesic distance induced by the $L^2$-metric on curves vanishes identically \cite{Bauer2012c, Michor2006c}. On the other hand, first and second order metrics overcome this degeneracy, as the following result of \cite{Michor2006c,Michor2007} shows.

\begin{theorem}
The geodesic distance of first and second order metrics on $\Imm(S^1,\R^d)$ separates points, i.e., $\on{dist}(c_0, c_1)>0$ holds for all $c_0\neq c_1$.
\end{theorem}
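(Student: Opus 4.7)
The plan is to produce a functional on $\Imm(S^1,\R^d)$ that is Lipschitz with respect to the geodesic distance and that separates any two distinct curves. A natural choice is the pointwise evaluation $\on{ev}_{\th_0}\colon c\mapsto c(\th_0)$ for fixed $\th_0\in S^1$. If every such $\on{ev}_{\th_0}$ is locally Lipschitz, then given distinct $c_0,c_1$ one can pick $\th_0$ with $c_0(\th_0)\neq c_1(\th_0)$, and the Lipschitz bound forces $\on{dist}(c_0,c_1)\geq C\inv|c_0(\th_0)-c_1(\th_0)|>0$.

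The Lipschitz property would follow from a Sobolev-type embedding
$\|h\|_\infty \leq C(\ell(c))\sqrt{G_c(h,h)}$ for all $h\in T_c\Imm(S^1,\R^d)$,
where $\ell(c)=\int_{S^1}\ud s$ is the length of $c$ and $C(\ell)$ depends only on $\ell$. For a first order metric this is the $H^1\hookrightarrow C^0$ embedding on $S^1$ parametrized by arc length, which is indeed controlled by the $a_0$- and $a_1$-terms of $G_c$; for a second order metric with $a_1=0$ the same estimate follows from $H^2\hookrightarrow C^0$ on compact 1D manifolds via an interpolation between the $a_0$- and $a_2$-terms. Integration along a smooth path $c\colon[0,1]\to\Imm(S^1,\R^d)$ then yields
\begin{equation*}
|c(1,\th_0)-c(0,\th_0)| \;\leq\; \int_0^1 \|c_t(t,\cdot)\|_\infty\ud t \;\leq\; \int_0^1 C(\ell(c(t)))\,\sqrt{G_{c(t)}(c_t,c_t)}\,\ud t,
\end{equation*}
so Lipschitz continuity of $\on{ev}_{\th_0}$ will follow once $C(\ell(c(t)))$ is uniformly controlled along minimizing sequences of paths.

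The hard part is exactly this control: along a path of bounded $G$-length, the length $\ell(c(t))$ must not blow up and must not collapse to zero. I would establish a Gr\"onwall-type estimate by differentiating $\sqrt{\ell(c(t))}$ in time, expanding $\p_t|c_\th|$ in terms of $\langle D_s c_t,v\rangle$ for the unit tangent $v$, and applying Cauchy--Schwarz to pull out a factor of $\sqrt{G_{c(t)}(c_t,c_t)}$. This would show that $|\sqrt{\ell(c(t_1))}-\sqrt{\ell(c(t_0))}|$ is bounded by a constant times the $G$-length of $c|_{[t_0,t_1]}$, hence $\ell(c(t))$ stays bounded along any path of length at most $\on{dist}(c_0,c_1)+\ep$. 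With this estimate in hand the constants $C(\ell(c(t)))$ are uniformly bounded along minimizing sequences, and passing to the infimum over paths gives $|c_0(\th_0)-c_1(\th_0)|\leq C\cdot\on{dist}(c_0,c_1)$, which completes the argument.
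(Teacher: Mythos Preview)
The paper does not give its own proof of this statement; it is quoted from \cite{Michor2006c,Michor2007}. Your outline is correct and is essentially the argument found there: one shows that $\sqrt{\ell_c}$ is Lipschitz for the geodesic distance (your Gr\"onwall step), which confines $\ell_c$ to a compact subinterval of $(0,\infty)$ along any sufficiently short path, and then the Sobolev embedding $\|h\|_\infty\leq C(\ell_c)\sqrt{G_c(h,h)}$ makes the evaluation maps $c\mapsto c(\th_0)$ locally Lipschitz, forcing $\on{dist}(c_0,c_1)>0$ whenever $c_0(\th_0)\neq c_1(\th_0)$.

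One detail worth making explicit for the second-order case with $a_1=0$: bounding $|\p_t\sqrt{\ell}|$ by a constant times $\sqrt{G_c(c_t,c_t)}$ requires control of $\|D_sc_t\|_{L^2(ds)}$ in terms of $G_c$, which now contains only the $L^2$- and $D_s^2$-terms. This follows from the interpolation inequality $\|D_sh\|_{L^2(ds)}^2\leq\|h\|_{L^2(ds)}\|D_s^2h\|_{L^2(ds)}$ on the circle (expand in arc-length Fourier modes); the constant is $1$ and in particular independent of $\ell_c$, so the Lipschitz bound on $\sqrt{\ell_c}$ holds with a constant depending only on $a_0$ and $a_2$, and the rest of your argument goes through unchanged.
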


Geodesics are locally distance-minimizing paths. They can be described by a partial differential equation, called the geodesic equation. It is the first order condition for minima of the energy functional
\begin{equation} \label{eq:EnergyFunctional}
\begin{aligned}
E(c) &= \int_0^1 G_{c(t)}(c_t(t), c_t(t)) \ud t \,.
\end{aligned}
\end{equation}
Recently some local and global existence results for geodesics of Sobolev metrics were shown in \cite{Bruveris2014, Bruveris2014b_preprint, Michor2007}. We summarize them here since they provide the theoretical underpinnings for the numerical methods presented in this paper.

\begin{theorem}
\label{thm:long_time}
The geodesic equation of second order metrics, written in terms of the momentum 
$p=|c'| (a_0 c_t-a_1 D^2_s c_t+a_2 D_s^4c_t)$, is given by
\begin{align}
\partial_t p = &-\frac{a_0}2 |c_\theta| D_s(\langle c_t,c_t \rangle D_sc) + \frac{a_1}{2}|c_\theta| D_s (\langle D_s c_t,D_s c_t \rangle D_sc) \nonumber
\\&
-\frac{a_2}{2}|c_\theta|D_s (\langle D^3_s c_t,D_s c_t \rangle D_sc)+\frac{a_2}{2}|c_\theta|D_s (\langle D^2_s c_t,D^2_s c_t \rangle D_sc)\,.
\label{eq:GeodesicEquation}
\end{align}
For any initial condition $(c_0, u_0) \in T\Imm(S^1,\R^d)$ the geodesic equation has a unique solution, which exists for all time. In contrast, the geodesic equation of first order Sobolev metrics is locally, but not globally, well-posed. 
\end{theorem}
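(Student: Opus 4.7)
The statement bundles three assertions: the form of the geodesic equation, long-time existence for second order metrics, and local-but-not-global existence for first order metrics. I would tackle each in turn.

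For the \emph{derivation of the geodesic equation}, the plan is to compute the first variation of the energy functional \eqref{eq:EnergyFunctional} along a variation $h$ of the path $c$ with fixed endpoints. The key preliminary step is to record the variation formulas for arc length and arc length differentiation, namely
\begin{equation*}
\p_{c,h} \ud s = \langle D_s h, D_s c\rangle \ud s,\qquad \p_{c,h} D_s f = -\langle D_s h, D_s c\rangle D_s f,
\end{equation*}
which follow from $|c'|=\sqrt{\langle c',c'\rangle}$ and the quotient rule. Applying these to \eqref{eq:sobolev_metric}, one obtains variations of each of the three integrands; collecting terms and integrating by parts in $\theta$ (moving $D_s$ off of $h$) and then in $t$ (moving $\partial_t$ off of $h$) produces an Euler--Lagrange equation. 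Introducing the momentum $p = |c'|(a_0 c_t - a_1 D_s^2 c_t + a_2 D_s^4 c_t)$ rewrites this equation in the stated momentum form.

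For \emph{long-time existence}, I would follow the strategy of \cite{Bruveris2014}. First, one extends the metric to the Hilbert manifold $\Imm^{H^k}(S^1,\R^d)$ for $k\ge 2$ and checks that the geodesic spray is smooth; standard ODE theory on Hilbert manifolds then yields short-time existence and a no-loss-no-gain regularity result (smooth data yield smooth geodesics). The substantive step is to derive a priori bounds that rule out blow-up in finite time. One uses conservation of the Riemannian energy $G_{c(t)}(c_t,c_t)$ together with the Sobolev embedding $H^2(S^1) \hookrightarrow C^1(S^1)$, which is the reason the argument works for $a_2 > 0$ but not for first order metrics. Specifically, the coercivity $a_2 \int |D_s^2 c_t|^2 \ud s \le G_{c(t)}(c_t,c_t)$, combined with control on $|c'|$ from below, lets one bound $\|c_t\|_{C^1}$, and then Grönwall-type estimates on $|c'|$ and on the curvature prevent the immersion from degenerating in finite time. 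The main obstacle is exactly this step: one must show simultaneously that $|c'|$ stays bounded above and bounded away from zero, and that the momentum $p$ remains in a regular regime. Once these bounds are in place, the local solution can be extended indefinitely.

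For the \emph{first order case}, I would exhibit (or cite from \cite{Michor2007}) a counterexample. For a shrinking circle under a first order metric the radius satisfies an ODE whose solution reaches zero in finite time, so the curve leaves $\Imm(S^1,\R^d)$; the absence of the $a_2$-term means one loses the $C^1$-control on $c_t$, and hence the argument from the second order case breaks down. This shows that global existence genuinely requires $a_2 > 0$ while local existence follows from the same Hilbert-manifold ODE argument as above applied at regularity $k\ge 1$.
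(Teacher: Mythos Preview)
The paper does not prove this theorem at all: it is stated as a summary of results from \cite{Bruveris2014, Bruveris2014b_preprint, Michor2007}, with no accompanying argument. So there is no proof to compare your proposal against; you are sketching the content of the cited references rather than reproducing anything from the paper.

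That said, your sketch tracks the cited literature faithfully. The derivation of the geodesic equation via the variation formulas for $\mathrm ds$ and $D_s$ and integration by parts is exactly how it is done in \cite{Michor2007}, and the long-time existence argument you outline (smooth spray on the Sobolev completion, conservation of $G_c(c_t,c_t)$, the embedding $H^2\hookrightarrow C^1$, and Gr\"onwall control on $|c'|$) is the strategy of \cite{Bruveris2014}. One small caveat on the first-order counterexample: for a pure $H^1$ metric the shrinking-circle computation does not obviously produce finite-time collapse, since conservation of $G_c(c_t,c_t)=2\pi r'^2(a_0 r + a_1/r)$ forces $r'\to 0$ as $r\to 0$ when $a_1>0$. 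The actual blow-up examples in the literature are more delicate (geodesics leaving the space of immersions by losing the condition $|c'|>0$ at a point rather than collapsing globally), so if you want to supply a concrete example you should either cite one directly or work with a different ansatz.
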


\emph{\begin{remark}
\emph{~The choice of parameters $a_0$, $a_1$, and $a_2$ of the Riemannian metric can have a large influence on the resulting optimal deformations. We illustrate this in Fig.~\ref{fig:geodesicsConstants}, where we show the geodesic  between a fish-like and a tool-like curve for various choices of parameters. 
}\end{remark}}

\begin{figure}
\centering
	\includegraphics[width=.2\textwidth]{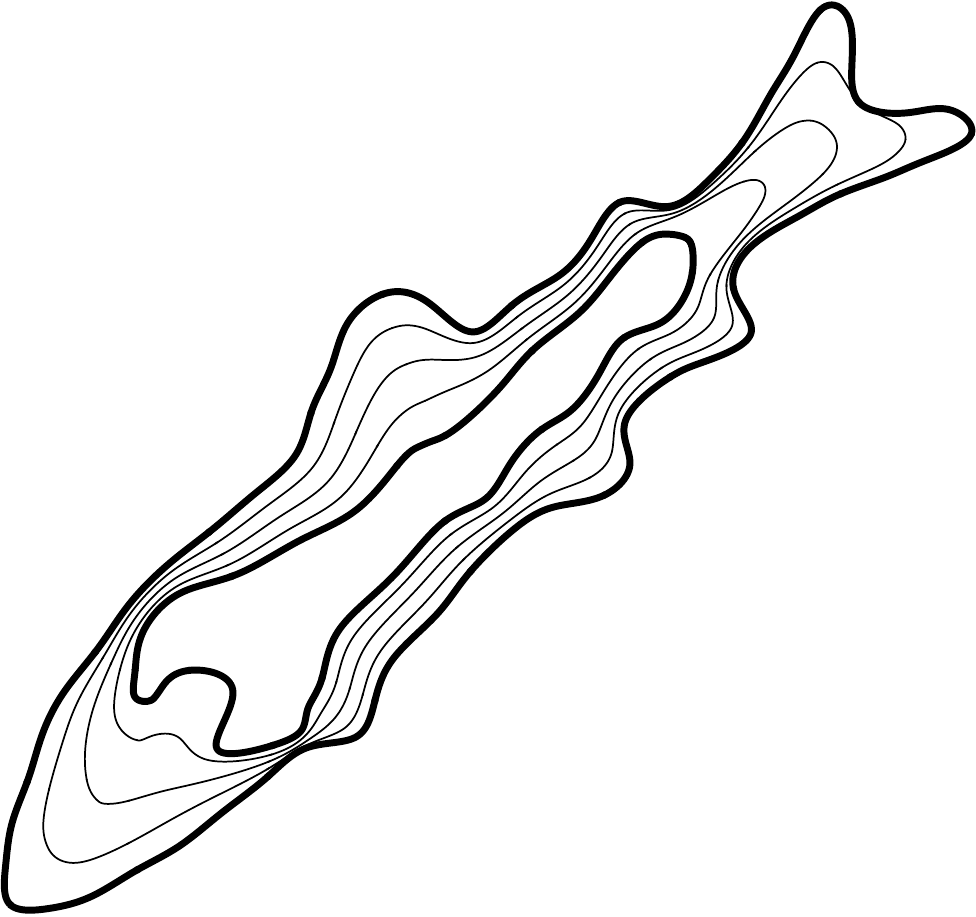}
	\includegraphics[width=.2\textwidth]{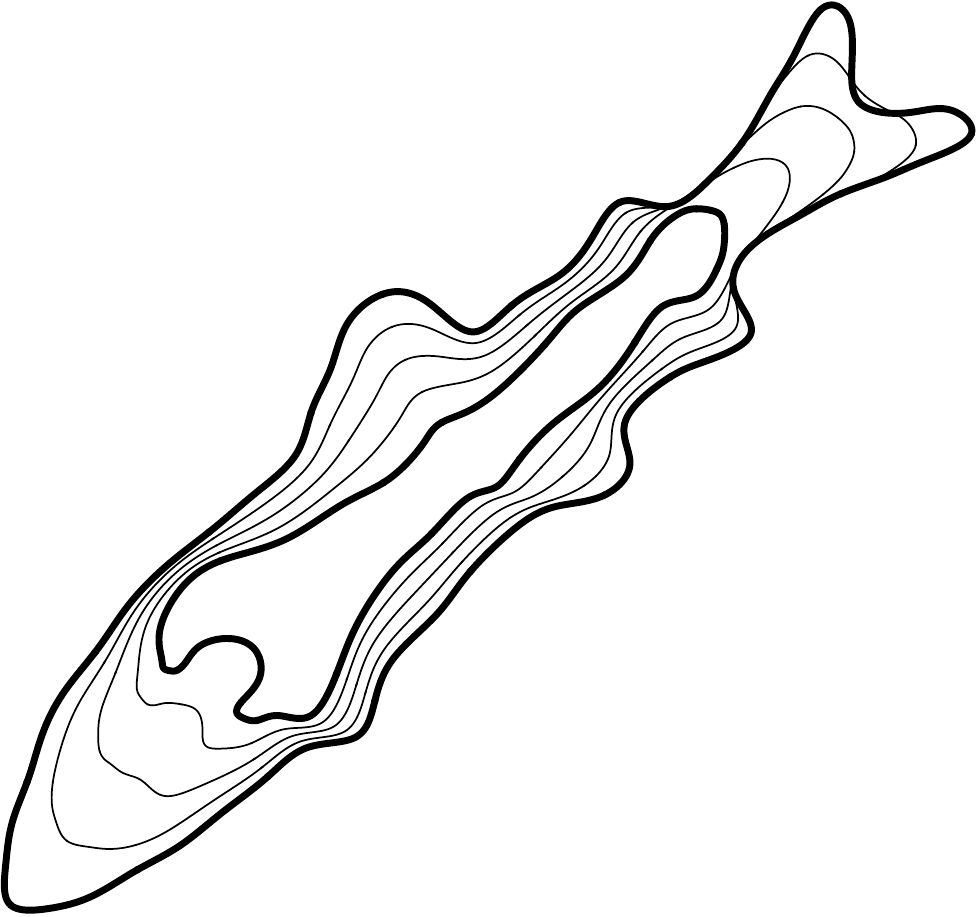}	
	\includegraphics[width=.2\textwidth]{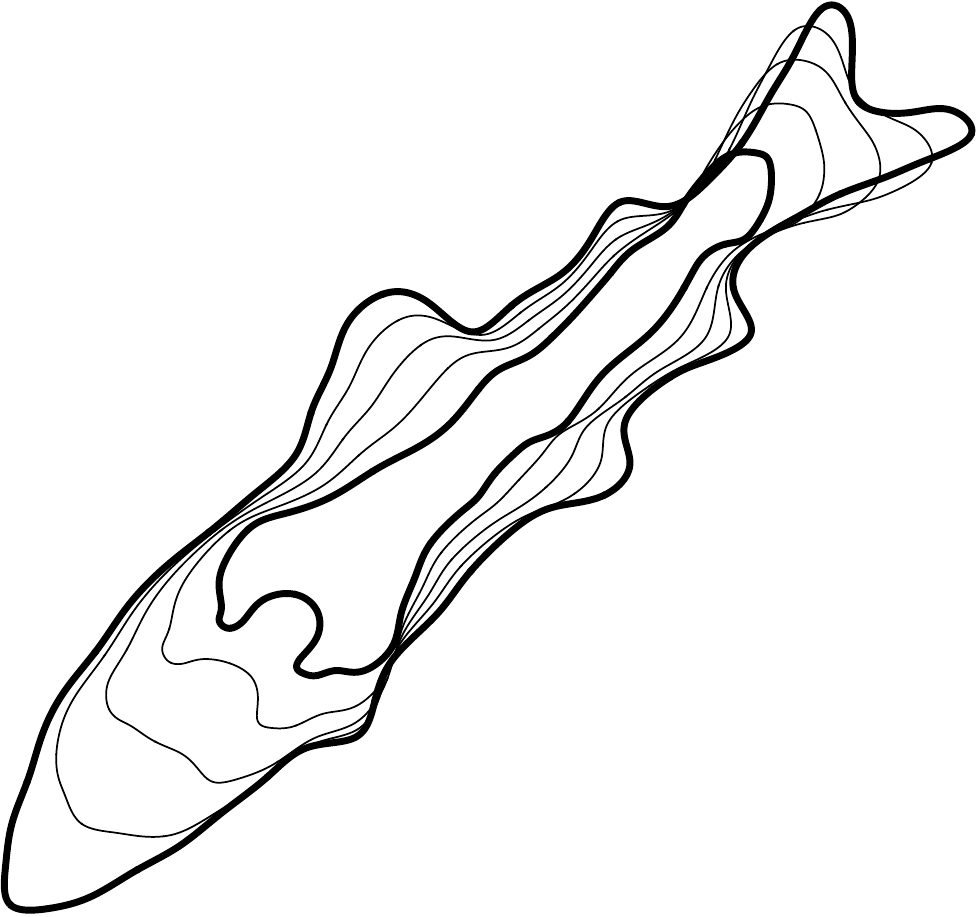}		
	\includegraphics[width=.2\textwidth]{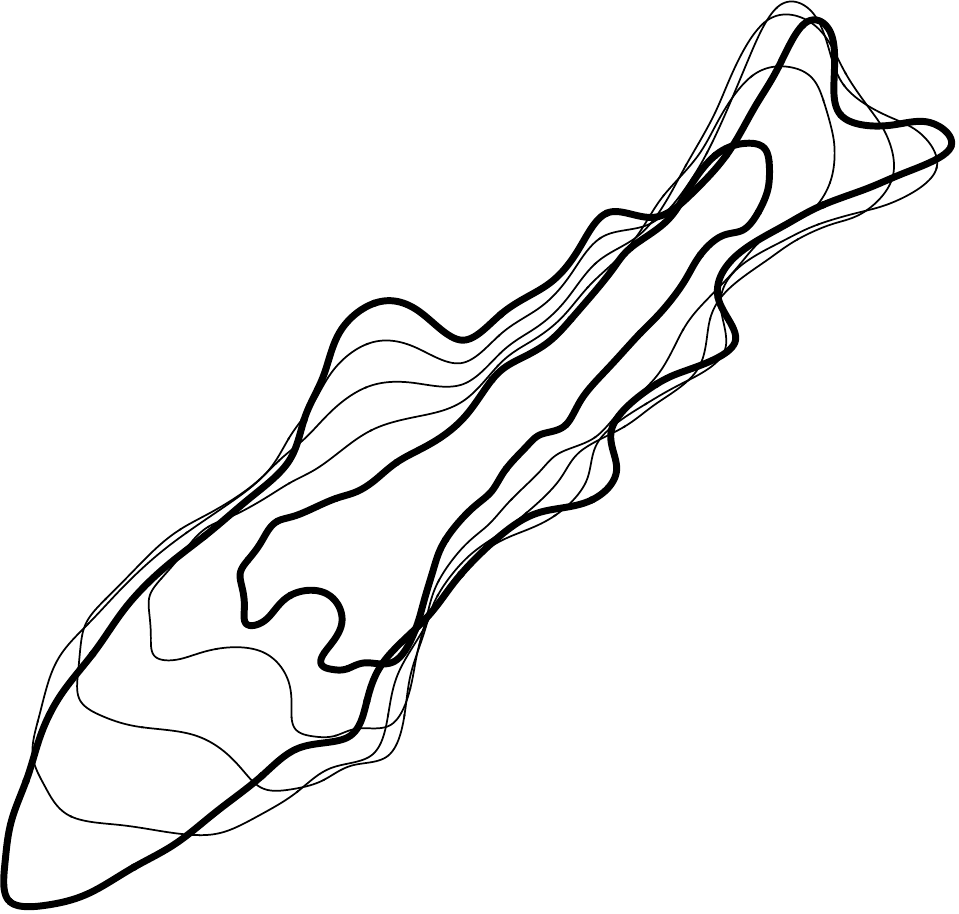}
	\caption{Influence of the constants in the metric on geodesics between a fish and a tool in the space of unparametrized curves. The metric parameter $a_1$ is set to zero, whereas the parameter $a_2$ is increased by a factor 10 in the second, a factor 100 in the third, and a factor 1000 in the fourth column. The corresponding geodesic distances are $135.65$, $162.35$, $229.26$ and $451.9$. Note that since we also optimize over translations and rotations of the target curve, the position in space varies.}
	\label{fig:geodesicsConstants}
\end{figure}

For second order metrics it is possible to compute the metric completion of the space of smooth immersions. We introduce the Banach manifold of Sobolev immersions
\begin{equation}
\mathcal I^2(S^1,\R^d)=\left\{c\in H^2(S^1,\R^d)\colon \forall \th \in S^1, c'(\th) \neq 0 \right\}\,.
\end{equation}
By the Sobolev embedding theorem this space is well-defined and an open subset of the space of all $C^1$-immersions.
It has been shown in \cite{BH2015,Bruveris2014b_preprint} that $\mathcal I^2(S^1,\R^d)$ coincides with the metric completion of the space of smooth immersions:
\begin{theorem}\label{thm:completenessImm}
The metric completion of the space $\Imm(S^1,\R^d)$ endowed with a second order Sobolev metric is $\mathcal I^2(S^1,\R^d)$. Furthermore, any two two curves $c_0$ and $c_1$ in the same connected component of 
$\mathcal I^2(S^1,\R^d)$ can be joined by a minimizing geodesic.
\end{theorem}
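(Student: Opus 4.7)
The plan is to prove the two assertions in turn, using in an essential way that the second order Sobolev metric extends to a smooth \emph{strong} Riemannian metric on the Banach manifold $\mathcal I^2(S^1,\R^d)$.

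First I would verify that $G$ is well-defined on $T\mathcal I^2(S^1,\R^d)$. Inspection of formula~\eqref{def:sobolev_metric2} shows that all integrands involve at most two derivatives of $c$ and of $h,k$, together with the pointwise reciprocal of $|c'|$; since $\mathcal I^2(S^1,\R^d)$ is open in $H^2$ and $c\in\mathcal I^2$ implies $|c'|\geq \ep>0$ by continuity, the integral makes sense for $c\in\mathcal I^2$ and $h,k\in H^2(S^1,\R^d)$. A direct estimate, combined with a Poincar\'e--Wirtinger type inequality controlling the lower order terms by $\|D_s^2 h\|_{L^2}$ plus $\|h\|_{L^2}$, shows that on every set on which $|c'|$ is bounded from above and away from zero, the quadratic form $G_c(\cdot,\cdot)$ is equivalent to the standard $H^2$ inner product. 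Hence $G$ defines a smooth strong Riemannian metric on $\mathcal I^2(S^1,\R^d)$.

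To identify the metric completion, I would then compare the geodesic distance $\on{dist}$ with the $H^2$-distance in two directions. For one inequality, smooth paths are admissible so $\on{dist}\le C\,\|c_0-c_1\|_{H^2}$ on each $H^2$-bounded subset of $\mathcal I^2$ by estimating the length of the straight line between $c_0$ and $c_1$ using the norm equivalence from the previous step. For the converse, I would use the strong metric structure: along any sufficiently short path, $G$-arclength controls $H^2$-arclength thanks to the lower bound $G_{c(t)}(c_t,c_t)\gtrsim \|c_t\|_{H^2}^2$, provided $|c(t)'|$ stays uniformly away from zero along the path. The crucial point---and the main technical obstacle---is to prevent Cauchy sequences for $\on{dist}$ from approaching the boundary $\{c'=0\}$; this is handled by a bootstrap argument showing that the $L^\infty$-norm of $|c'|^{-1}$ cannot blow up along a $\on{dist}$-Cauchy sequence, since such blow-up would produce a uniform lower bound on the $G$-length of any connecting path. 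Once this is established, density of $C^\infty$-immersions in $\mathcal I^2(S^1,\R^d)$ with respect to the $H^2$-norm, together with the two-sided equivalence, identifies the completion with $\mathcal I^2(S^1,\R^d)$.

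For existence of minimizing geodesics I would use the direct method of calculus of variations. Fix $c_0,c_1$ in the same connected component and consider a minimizing sequence $(c^{(n)})$ of $H^1$-paths in $\mathcal I^2(S^1,\R^d)$ joining them, with energy converging to $\inf E$. The uniform energy bound yields a uniform bound on the $H^1([0,1];H^2(S^1,\R^d))$-norm of $c^{(n)}$ and, combined with the non-degeneracy argument above, a uniform lower bound on $\inf_{t,\th}|c^{(n)}_\theta(t,\th)|$. Passing to a subsequence converging weakly in $H^1([0,1];H^2)$ and uniformly in $C([0,1];C^1)$, one obtains a limit path $c$ still lying in $\mathcal I^2$. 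Lower semicontinuity of $E$ under weak convergence, which follows from convexity of $G_c(\cdot,\cdot)$ in the velocity and continuity in the base point, then gives that $c$ attains the infimum, and standard regularity theory for the geodesic equation in Theorem~\ref{thm:long_time} shows that the minimizer is in fact a geodesic.
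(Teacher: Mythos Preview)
The paper does not actually prove this theorem: it is quoted as a known result from \cite{BH2015,Bruveris2014b_preprint}, with no argument supplied in the text. Your proof plan is therefore not being compared against anything in the present paper, but it does track the strategy of the cited reference \cite{Bruveris2014b_preprint} quite closely. The three pillars you identify---extension of $G$ to a strong Riemannian metric on $\mathcal I^2(S^1,\R^d)$, a two-sided local comparison between the geodesic distance and the $H^2$-distance hinging on a quantitative non-degeneracy estimate for $|c'|$ along $\on{dist}$-Cauchy sequences, and the direct method applied to the energy functional with weak lower semicontinuity---are exactly the ingredients used there.

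One point worth sharpening: the ``bootstrap'' preventing $|c'|^{-1}$ from blowing up is the heart of the matter and is not quite as you describe it. In \cite{Bruveris2014b_preprint} this is obtained by first controlling the arc length $\ell_c$ and $\|\log|c'|\|_{L^\infty}$ in terms of the geodesic distance (via Gronwall-type estimates along paths), which then feeds into the uniform equivalence of $G_c$ with the flat $H^2$-norm on metric balls. Your phrasing suggests a contradiction argument, but what is actually needed---and what makes the argument work---is a quantitative Lipschitz bound of the form $\big|\,\|\log|c_0'|\|_\infty - \|\log|c_1'|\|_\infty\,\big| \le C\,\on{dist}(c_0,c_1)$ valid on metric balls. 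Once you have that, the rest of your outline goes through essentially as written.
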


\subsection{Unparametrized curves}
\label{sec:unparametrized}

In many applications curves are considered equal if they differ only by their parametrization, i.e., we identify the curves $c$ and $c \circ \ph$, where $\ph \in \on{Diff}(S^1)$ is a reparametrization. The reparametrization group $\on{Diff}(S^1)$ is the diffeomorphism group of the circle,
\begin{equation*}
\on{Diff}(S^1) = \left\{ \ph \in C^\infty(S^1,S^1) \colon \ph' > 0 \right\}\,,
\end{equation*}
which is an infinite-dimensional regular Fr\'echet Lie group \cite{Michor1997}. Reparametrizations act on curves by composition from the right, i.e., $c\circ \ph$ is a reparametrization of $c$. The space 
\begin{equation*}
 B_i(S^1,\R^d)=\Imm(S^1,\R^d) / \on{Diff}(S^1)\,,
\end{equation*}
of unparametrized curves is the orbit space of this group action. This space is not a manifold; it has singularities at any curve $c$ with nontrivial isotropy subgroup \cite{Michor1991}. We therefore restrict ourselves to the dense open subset $\Imm_{f}(S^1,\R^d)$ of curves upon which $\on{Diff}(S^1)$ acts freely and define 
\begin{equation*}
 B_{i,f}(S^1,\R^d) = \Imm_{f}(S^1,\R^d) / \on{Diff}(S^1)\,.
\end{equation*}
This restriction, albeit important for theoretical reasons, has no influence on the practical applications of Sobolev metrics, since $B_{i,f}(S^1,\R^d)$ is open and dense in $B_i(S^1,\R^d)$. We have the following result concerning the manifold structure of the orbit space and the descending properties of Sobolev metrics \cite{Bauer2011b, Michor1991, Michor2007}.

\begin{theorem}\label{thm:shape_space} 
The space $ B_{i,f}(S^1,\R^d)$ is a Fr\'echet manifold and the base space of the principal fibre bundle
\begin{equation*}
\pi: \Imm_f(S^1,\R^d) \to  B_{i,f}(S^1,\R^d)\,,\quad c \mapsto c \circ \on{Diff}(S^1)\,,
\end{equation*}
with structure group $\on{Diff}(S^1)$. A Sobolev metric $G$ on $\Imm_f(S^1,\R^d)$ induces a metric on $B_{i,f}(S^1,\R^d)$ such that the projection $\pi$ is a Riemannian submersion.
\end{theorem}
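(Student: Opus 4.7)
The plan is to separate the statement into two parts: the principal bundle structure (which confers the Fr\'echet manifold structure on $B_{i,f}(S^1,\R^d)$), and the descending of the metric $G$ making $\pi$ a Riemannian submersion.

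For the bundle structure I would invoke a slice theorem for the action of $\on{Diff}(S^1)$ on $\Imm_f(S^1,\R^d)$. Freeness is built into the definition of $\Imm_f$. The nontrivial input is the existence of local slices: near a base point $c_0$ one constructs a submanifold $S \subset \Imm_f$ transverse to the orbit, for instance by taking $S$ to consist of small perturbations of $c_0$ in directions pointwise orthogonal to $c_0'$. An inverse-function-theorem argument in the convenient-calculus framework, as in the cited works of Michor and collaborators, shows that the map $S \x \on{Diff}(S^1) \to \Imm_f$, $(\tilde c, \ph) \mapsto \tilde c \circ \ph$, is a local diffeomorphism, which provides the local trivializations exhibiting $\pi$ as a principal $\on{Diff}(S^1)$-bundle and gives the quotient its Fr\'echet manifold charts.

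For the descended metric, the first step is to verify invariance of $G$: $G_{c \circ \ph}(h \circ \ph, k \circ \ph) = G_c(h,k)$ for all $\ph \in \on{Diff}(S^1)$. This is immediate from the intrinsic character of the arc length objects, since $D_s^{c \circ \ph}(h \circ \ph) = (D_s^c h) \circ \ph$ and $\ud s^{c \circ \ph} = \ph^\ast \ud s^c$, together with a change of variables in the defining integral \eqref{eq:sobolev_metric}. Invariance implies that the vertical subspace $V_c = T_c(c \circ \on{Diff}(S^1)) = \{c' f : f \in C^\infty(S^1,\R)\}$ and its $G_c$-orthogonal complement $H_c$ are both equivariant under the group action. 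The remaining step is to produce the topological splitting $T_c \Imm_f = V_c \oplus H_c$: for each $h \in T_c \Imm_f$ one seeks a scalar function $f$ such that $h - c' f \in H_c$, which amounts to
\[
G_c(c' f, c' g) = G_c(h, c' g) \qquad \forall g \in C^\infty(S^1, \R)\,.
\]
The bilinear form on the left corresponds to a self-adjoint fourth-order ordinary differential operator $\mathcal L_c$ acting on $f$, which is elliptic because $a_0 > 0$ and $a_2 > 0$ and injective because $c'f \equiv 0$ forces $f \equiv 0$. Elliptic theory on the compact manifold $S^1$ provides $\mathcal L_c^{-1}$ and hence a smooth horizontal projection, and defining $\bar G_{\pi(c)}(T\pi \cdot h, T\pi \cdot k) := G_c(h^\perp, k^\perp)$ yields a well-defined metric on $B_{i,f}(S^1,\R^d)$ which, by construction, makes $\pi$ a Riemannian submersion.

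The main obstacles I expect are analytical. The slice theorem requires establishing properness of the $\on{Diff}(S^1)$-action in the Fr\'echet setting, which is subtle because the diffeomorphism group is only a regular Lie group in Michor's sense. The construction of $H_c$ requires smoothness of $\mathcal L_c^{-1}$ and its smooth dependence on the base point $c$, which in turn rests on elliptic regularity arguments carried out uniformly over the Fr\'echet manifold of immersions.
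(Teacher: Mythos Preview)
The paper does not prove this theorem; it merely cites \cite{Bauer2011b, Michor1991, Michor2007} for both the principal bundle structure and the descending of the metric. Your outline is essentially the strategy carried out in those references: \cite{Michor1991} constructs the slice through $c_0$ exactly as you suggest, via normal perturbations $\{c_0 + f\,n_{c_0}\}$, and obtains the local trivialization from a convenient-calculus inverse function theorem; \cite{Michor2007, Bauer2011b} establish the Riemannian submersion by verifying reparametrization invariance of $G$ and producing the horizontal projection through the ellipticity of the operator $h \mapsto a_0 h - a_1 D_s^2 h + a_2 D_s^4 h$.

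One correction to your list of obstacles: properness of the $\on{Diff}(S^1)$-action is not actually needed, and indeed is not established in \cite{Michor1991}. The slice construction there is direct---one shows by hand that the map $(\tilde c,\ph)\mapsto \tilde c\circ\ph$ is a local diffeomorphism near $(c_0,\on{Id})$---and freeness of the action on $\Imm_f$ then suffices to assemble the local trivializations into a principal bundle. The genuine analytic work lies where you correctly identify it: in the smooth invertibility of the elliptic operator governing the horizontal projection and its smooth dependence on $c$, which is handled in \cite{Michor2007, Bauer2011b}.
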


The induced Riemannian metric on $B_{i,f}(S^1,\R^d)$ defines a geodesic distance, which can also be calculated using paths in $\on{Imm}_f(S^1,\R^d)$ connecting $c_0$ to the orbit $c_1 \circ \on{Diff}(S^1)$, i.e., for $\pi(c_0),\pi(c_1) \in B_{i,f}(S^1,\R^d)$ we have,
\begin{equation*}
\on{dist}\big(\pi(c_0), \pi(c_1)\big) = \inf \left\{ L(c)\colon c(0) = c_0,\, c(1) \in c_1 \circ \on{Diff}(S^1)\right\} \,.	
\end{equation*}
To relate the geometries of $\on{Imm}(S^1,\R^d)$ and $B_{i,f}(S^1,\R^d)$, one defines the vertical and horizontal subspaces of $T_c\on{Imm}_f(S^1,\R^d)$,
\[
\on{Ver}_c = \on{ker}(T_c \pi)\,,\quad
\on{Hor}_c = \left(\on{Ver}_c\right)^{\perp,G_c}\,.
\]
As shown in \cite{Michor2007} they form a decomposition of $T_c\on{Imm}_f(S^1,\R^d)$,
\begin{align*}
T_c\Imm_f(S^1,\R^d)&=\on{Ver}_c\oplus \on{Hor}_c\,,
\end{align*}
as a direct sum. More explicitly, 
\begin{align*}
\on{Ver}_c&=\left\{g.v_c\in  T_c\Imm_f(S^1,\R^d)\colon g\in C^{\infty}(S^1)\right\} \\
\on{Hor}_c&=\left\{k\in  T_c\Imm_f(S^1,\R^d)\colon 
\langle a_0 k-a_1 D^2_s k+a_2 D_s^4k,v_c\rangle=0 \right\}\,,
\end{align*}
with $v_c = D_s c$ the unit tangent vector to $c$.

A geodesic $c$ on $\on{Imm}_f(S^1,\R^d)$ is called \emph{horizontal} at $t$, if $\p_t c(t) \in \on{Hor}_{c(t)}$. It can be shown that if $c$ is horizontal at $t=0$, then it is horizontal at all $t$. Furthermore, geodesics on $B_{i,f}(S^1,\R^d)$ can be lifted to horizontal geodesics on $\Imm_f(S^1,\R^d)$ and the lift is unique if we specify the initial position of the lift; conversely, horizontal geodesics on $\on{Imm}(S^1,\R^d)$ project down to geodesics on $B_{i,f}(S^1,\R^d)$.

What about long-time existence of geodesics on $B_{i,f}(S^1,\R^d)$? Using the correspondence between geodesics on $B_{i,f}(S^1,\R^d)$ and horizontal geodesics on $\on{Imm}_f(S^1,\R^d)$ together with Thm.~\ref{thm:long_time} we see that the horizontal lift of a geodesic can be extended for all times. However, it can leave the subset of free immersions and pass through curves with a non-trivial isotropy group. Thus the space $B_{i,f}(S^1,\R^d)$ is not geodesically complete, but we can regain geodesic completeness if we allow geodesics to pass through $B_i(S^1,\R^d)$.

The space $B_{i}(S^1,\R^d)$ inherits some of the completeness properties of $\Imm(S^1,\R^d)$. To formulate these properties we introduce the group $\mathcal D^2(S^1)$ of $H^2$-diffeomorphisms and the corresponding shape space of Sobolev immersions,
\begin{equation*}
\mathcal B^2(S^1,\R^d)= \mathcal I^2(S^1,\R^d)/\mathcal D^2(S^1)\,.
\end{equation*}
It is not known whether this space is a smooth Banach manifold, it is however a metric length space. The structure of it is explained in more detail in the article \cite{Bruveris2014b_preprint}, where the following completeness result is proven.
\begin{theorem}\label{thm:quotient_metric}
 Let $G$ be a second order Sobolev metric with constant coefficients.
\begin{enumerate}[(1)]
\item
The space $\left(\mathcal B^2(S^1,\R^d),\on{dist}\right)$, where $\on{dist}$ is the quotient distance induced by $\left(\mathcal I^2(S^1,\R^d),\on{dist}\right)$, is a complete metric space, and it is the metric completion of $\left(B_{i,f}(S^1,\R^d), \on{dist}\right)$.

\item
Given two unparametrized curves $C_1, C_2 \in \mathcal B^2(S^1,\R^d)$ in the same connected component, there exist $c_1, c_2 \in 
\mathcal I^2(S^1,\R^d)$ with $c_1 \in \pi\inv(C_1)$ and $c_2 \in \pi\inv(C_2)$, such that
\[
\on{dist}(C_1, C_2) = \on{dist}(c_1, c_2)\,;
\]
equivalently the infimum in
\[
\on{dist}(\pi(c_1), \pi(c_2)) = \inf_{\ph \in \mathcal D^2(S^1)} \on{dist}(c_1, c_2 \circ \ph)
\]
is attained.

\item
The metric space $\left(\mathcal B^2(S^1,\R^d),\on{dist}\right)$ is a length space and any two shapes in the same connected component can be joined by a minimizing geodesic.
\end{enumerate}
\end{theorem}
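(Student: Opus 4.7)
The plan is to bootstrap all three statements from Thm.~\ref{thm:completenessImm} (completeness of $\mathcal I^2(S^1,\R^d)$ and existence of minimizing geodesics between curves in the same connected component) by carefully exploiting that $\mathcal B^2(S^1,\R^d)$ is the orbit space of the action of $\mathcal D^2(S^1)$ on $\mathcal I^2(S^1,\R^d)$.

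For part (1), I would first verify that the quotient pseudo-distance $\on{dist}(\pi(c_1),\pi(c_2)) = \inf_\ph \on{dist}(c_1, c_2\circ \ph)$ actually separates orbits. The core fact required is that each orbit $c\circ \mathcal D^2(S^1)$ is closed in $(\mathcal I^2,\on{dist})$; this reduces to continuity of the right action of $\mathcal D^2(S^1)$ on $\mathcal I^2(S^1,\R^d)$, which follows from standard composition properties in $H^2$ together with Sobolev embedding into $C^1$. Completeness of $(\mathcal B^2,\on{dist})$ then follows by a subsequence argument: given a Cauchy sequence $[c_n]$, pick a fast-converging subsequence with $\on{dist}([c_n],[c_{n+1}])<2^{-n}$, iteratively choose representatives $c_n$ with $\on{dist}(c_n,c_{n+1})<2^{-n+1}$, deduce $(c_n)$ is Cauchy in $\mathcal I^2$, take a limit $c_\infty$, and observe $[c_n]\to [c_\infty]$. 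Density of $B_{i,f}$ in $\mathcal B^2$ follows from density of smooth free immersions in $\mathcal I^2$ together with the fact that $\pi$ is non-expansive.

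For part (2) -- which I expect to be the main obstacle -- I would argue by direct minimization. Let $\ph_n\in\mathcal D^2(S^1)$ be a minimizing sequence for $\inf_\ph \on{dist}(c_1, c_2\circ \ph)$. The boundedness of the geodesic distances keeps $c_2\circ \ph_n$ in a bounded metric ball around $c_1$; since $G$ extends to a strong Riemannian metric on $\mathcal I^2$ by Thm.~\ref{thm:completenessImm} and the references cited there, this bounds $c_2\circ\ph_n$ in the $H^2$-norm, which in turn forces the sequence $\ph_n$ to be bounded in $H^2(S^1,S^1)$ (one reads off the $H^2$-norm of $\ph_n$ from $c_2\circ\ph_n$ using that $c_2$ is a fixed immersion and the chain rule). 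Extract a weakly $H^2$-convergent, and hence $C^1$-convergent, subsequence with limit $\ph$. The delicate step is to verify $\ph\in\mathcal D^2(S^1)$, i.e.\ that $\ph'>0$ is preserved in the limit: this is where one uses that $\mathcal I^2$ is open in $H^2$ so that the infimum cannot be achieved by letting $\ph$ degenerate. Lower semicontinuity of $\on{dist}(c_1,\cdot)$ under $H^2$-convergence and continuity of $\ph\mapsto c_2\circ\ph$ then yield $\on{dist}(c_1,c_2\circ\ph)=\on{dist}([c_1],[c_2])$.

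Once (2) is available, (3) is immediate. Given $C_1,C_2\in\mathcal B^2$ in the same connected component, pick optimal representatives $c_1\in\pi^{-1}(C_1)$, $c_2\in\pi^{-1}(C_2)$ from (2), invoke Thm.~\ref{thm:completenessImm} to obtain a minimizing geodesic $c\colon[0,1]\to\mathcal I^2$ from $c_1$ to $c_2$, and project to $\pi\circ c$. Since $\pi$ is $1$-Lipschitz with respect to the quotient distance, the length of $\pi\circ c$ is bounded above by the length of $c$, which equals $\on{dist}(c_1,c_2)=\on{dist}(C_1,C_2)$. The reverse inequality holds by definition of the quotient geodesic distance, so $\pi\circ c$ realizes $\on{dist}(C_1,C_2)$; this simultaneously establishes the length space property and the existence of minimizing geodesics.
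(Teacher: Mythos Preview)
The paper does not prove this theorem; it is quoted from \cite{Bruveris2014b_preprint}, so there is no in-paper argument to compare against. Your overall strategy is the natural one and broadly matches what is done in that reference, but part (2) as written has two soft spots.

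First, the sentence ``this is where one uses that $\mathcal I^2$ is open in $H^2$ so that the infimum cannot be achieved by letting $\ph$ degenerate'' is not an argument. Openness alone does not prevent a minimizing sequence from drifting to the boundary; what you actually need is the quantitative statement that on any metric ball in $(\mathcal I^2,\on{dist})$ the speed $|c'|$ is bounded below by a positive constant (this is \cite[Lem.~4.2]{Bruveris2014b_preprint}, which the present paper already invokes in the $\Gamma$-convergence proof). Once you have that, $|(c_2\circ\ph_n)'| = |c_2'\circ\ph_n|\,|\ph_n'| \geq \de$ together with an upper bound on $|c_2'|$ gives a uniform lower bound on $|\ph_n'|$, and then $C^1$-convergence preserves $\ph'>0$.

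Second, ``lower semicontinuity of $\on{dist}(c_1,\cdot)$ under $H^2$-convergence'' is doing real work you have not supplied. From weak $H^2$-convergence of $\ph_n$ you only get \emph{weak} $H^2$-convergence of $c_2\circ\ph_n$ (the second derivative contains $\ph_n''$, which converges only weakly), and weak lower semicontinuity of the geodesic distance is not automatic. The clean way around this---and this is how \cite{Bruveris2014b_preprint} proceeds---is to minimize over \emph{paths} rather than endpoints: take paths $\ga_n$ from $c_1$ to $c_2\circ\ph_n$ with $E(\ga_n)\to\on{dist}([c_1],[c_2])^2$, extract a weak $H^{1,2}$-limit $\ga$, and use weak lower semicontinuity of the energy $E$ (this is \cite[Thm.~5.2]{Bruveris2014b_preprint}, again already cited in the paper). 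The endpoint $\ga(1)$ then furnishes the optimal representative in the orbit of $c_2$.

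Parts (1) and (3) are fine as sketched.
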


In the last statement of the above theorem we have to understand a minimizing geodesic in the sense of metric spaces.

\subsection{Euclidean motions}

Curves modulo Euclidean motions are a natural object of consideration in many applications. The Euclidean motion group $SE(d) = SO(d) \ltimes \R^d$ is the semi-direct product of the translation group $\R^d$ and the rotation group $SO(d)$. These groups act on $\Imm(S^1,\R^d)$ by composition from the left. The metric \eqref{def:sobolev_metric2} is invariant under these group actions,
\begin{align*}
G_{R.c+a}(R.h,R.k)=G_{c}(h,k)\qquad\forall (R,a) \in SE(d)\,.
\end{align*}
As in the previous section we obtain an induced Riemannian metric on the quotient space 
\[
\mathcal S(S^1,\R^d)=\Imm_f(S^1,\R^d)/\on{Diff}(S^1)\x SE(d) = B_{i,f}(S^1,\R^d) / SE(d) \,,
\]
such that the projection $\pi : \on{Imm}_f(S^1,\R^d) \to \mathcal S(S^1,\R^d)$ is a Riemannian submersion:
\begin{theorem}The space $\mathcal S(S^1,\R^d)$ is a Fr\'echet manifold and the base space of the principal fibre bundle
\begin{equation*}
\pi: \Imm_f(S^1,\R^d) \to  \mathcal S(S^1,\R^d)\,,\quad c \mapsto SE(d).c \circ \on{Diff}(S^1) \,,
\end{equation*}
with structure group $\on{Diff}(S^1) \x SE(d)$. A Sobolev metric $G$ on $\Imm_f(S^1,\R^d)$ induces a metric on $\mathcal S(S^1,\R^d)$ such that the projection $\pi$ is a Riemannian submersion.
\end{theorem}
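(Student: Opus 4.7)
The plan is to obtain $\mathcal S(S^1,\R^d)$ as an iterated quotient, first by the infinite-dimensional group $\on{Diff}(S^1)$ using Theorem~\ref{thm:shape_space}, then by the finite-dimensional Lie group $SE(d)$. First I would verify that the two actions commute: for $(R,a) \in SE(d)$ and $\ph \in \on{Diff}(S^1)$ we have $(R.c + a)\circ \ph = R.(c\circ \ph) + a$. Hence the product group $\on{Diff}(S^1)\x SE(d)$ acts on $\Imm_f(S^1,\R^d)$ and its orbits are precisely the fibres appearing in the statement.

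Next, I would apply Theorem~\ref{thm:shape_space} to get the Fr\'echet manifold $B_{i,f}(S^1,\R^d)$ together with its induced Riemannian structure. Since $SE(d)$ preserves the Sobolev metric on $\Imm_f(S^1,\R^d)$, the residual action of $SE(d)$ on $B_{i,f}(S^1,\R^d)$ is smooth and isometric. To apply the quotient manifold theorem for smooth actions of finite-dimensional Lie groups on Fr\'echet manifolds I would verify properness and freeness. Properness follows from compactness of $SO(d)$ together with translation control via an $SE(d)$-equivariant quantity such as the centroid of the curve. Freeness can be arranged by restricting $\Imm_f(S^1,\R^d)$ further to curves whose trace admits no nontrivial Euclidean self-symmetry; this is a dense open subset and the restriction is harmless in applications.

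The Fr\'echet manifold and principal bundle structures of $\mathcal S(S^1,\R^d)$ then follow by composing the two principal bundles $\Imm_f \to B_{i,f}$ and $B_{i,f} \to \mathcal S$, whose combined structure group is $\on{Diff}(S^1)\x SE(d)$. For the Riemannian submersion property, the invariance identity $G_{R.c+a}(R.h,R.k) = G_c(h,k)$ ensures that the metric on $B_{i,f}$ descends further to $\mathcal S$ in such a way that $B_{i,f}\to\mathcal S$ is a Riemannian submersion. Since the composition of two Riemannian submersions is again a Riemannian submersion, this yields the final claim.

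The main obstacle is the freeness of the $SE(d)$-action on $B_{i,f}(S^1,\R^d)$: a curve $c$ could satisfy $R.c + a = c\circ\ph$ for some nontrivial $(R,a,\ph)$, meaning its trace has a rotational or reflective self-symmetry as an unparametrized object. This is the one step that goes beyond a routine composition of the earlier quotient constructions, and it is the reason one implicitly imposes an additional genericity condition on the curves in $\Imm_f$. Once this point is handled, everything else is a straightforward assembly of the previously established results.
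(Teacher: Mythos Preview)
The paper does not supply a proof of this theorem; it is stated without argument, in direct analogy with Theorem~\ref{thm:shape_space}, and the only supporting remark is the observation immediately following the statement that the left $SE(d)$-action commutes with the right $\on{Diff}(S^1)$-action, so the order of quotienting is immaterial. Your proposal therefore goes well beyond what the paper offers, and the iterated-quotient strategy you outline is exactly the natural way to fill in the details.

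Your identification of the genuine issue is correct and worth emphasizing: the $SE(d)$-action on $B_{i,f}(S^1,\R^d)$ is \emph{not} free in general (a round circle is fixed by a circle's worth of rotations, for instance), so the theorem as literally stated in the paper cannot hold on all of $\Imm_f(S^1,\R^d)$ without a further genericity restriction of the kind you describe. The paper glosses over this point. Your proposed fix---passing to the dense open set of curves with no nontrivial Euclidean self-symmetry---is the standard remedy and is consistent with how the analogous restriction to $\Imm_f$ was handled for the $\on{Diff}(S^1)$-action. So your proof sketch is both more careful and more complete than the paper's treatment; the only caveat is that you are, strictly speaking, proving a slightly corrected version of the stated theorem rather than the theorem verbatim.
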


Note that the left action of $SE(d)$ commutes with the right action of $\on{Diff}(S^1)$ and hence the order of the quotient operations does not matter.  The induced geodesic distance on the quotient space is given by the infimum
\begin{equation*}
\on{dist}\big(\pi(c_0), \pi(c_1)\big) = \inf \left\{ L(c)\colon c(0) = c_0,\, c(1) \in \pi(c_1) = SE(d) .c_1 \circ \on{Diff}(S^1 )\right\} \,,
\end{equation*}
with the infimum being taken over paths $c : [0, 1] \to \on{Imm}(S^1,\R^d)$.

Similarily as in the previous section geodesics on $\mathcal S(S^1,\R^d)$ can be lifted to horizontal geodesics on $\Imm_f(S^1,\R^d)$ and, conversely, horizontal geodesics on $\on{Imm}(S^1,\R^d)$ project down to geodesics on $ \mathcal S(S^1,\R^d)$. Thus the space $\mathcal S(S^1,\R^d)$ inherits again some of the completeness properties of $\Imm(S^1,\R^d)$ and we obtain the equivalent of Thm.~\ref{thm:quotient_metric} also for the space $\mathcal B^2(S^1,\mathbb R^d)/SE(d)$.

\emph{\begin{remark}
\emph{~The Sobolev metric \eqref{def:sobolev_metric2} is not invariant with respect to scalings. However, this lack of invariance can be addressed by introducing weights depending on the length $\ell_c$ of the curve $c$. The modified metric
\begin{equation*}
\widetilde G_c(h,k) = \int_{S^1} \frac{a_0}{\ell^3_c}\langle h,k \rangle+\frac{a_1}{\ell_c} \langle D_s h,D_s k \rangle+a_2\ell_c \langle D_s^2 h,D_s^2 k \rangle \ud s
\end{equation*}
is invariant with respect to scalings. It induces a metric on the quotient space $\mathcal S(S^1,\R^d)/\mathbb R_+$, where $\mathbb R_+$ is the scaling group acting by multiplication $(\la, c) \mapsto \la.c$ on curves.
}\end{remark}}

\section{Discretization}

In order to numerically compute geodesics, the infinite-dimensional space of curves must be discretized. The method we choose is standard: we construct an appropriate finite-dimensional function space and perform optimization therein. We choose B-splines among the many possible options because B-splines and their derivatives have piecewise polynomial representations and can be evaluated efficiently. This permits fast and simple computation of the energy functional and its derivatives. Furthermore, in contrast to standard finite-element discretization, it is possible to control the global regularity of the functions. For details regarding B-splines, their definition, efficient computations, etc., we refer to \cite{Schumaker2007} and the vast literature on the subject. 

For simplicity, we shall work only with \textit{simple} B-splines, i.e., splines where all interior knots have multiplicity one. Hence the splines have maximal regularity at the knots. We will define splines of degrees $n_t$ and $n_\theta$ in the variables $t \in [0,1]$ and $\theta\in[0,2\pi]$, respectively. The corresponding numbers of control points are denoted by $N_t$ and $N_\theta$. For $t$ we use a uniform knot sequence on the interval $[0,1]$ with full multiplicity at the boundary knots:
\[
\De_t = \{ t_i \}_{i=0}^{2n_t+N_t}\,,\qquad
t_i = \begin{cases} 
0 & 0 \leq i < n_t \\
\displaystyle\frac{i-n_t}{N_t} & n_t \leq i < n_t + N_t \\
1 & n_t + N_t \leq i \leq 2n_t + N_t\,.
\end{cases}
\]
For $\th$ we want the splines to be periodic on the interval $[0,2\pi]$. Therefore we choose knots
\[
\De_\th = \{ \th_j \}_{j=0}^{2n_\th + N_\th}\,,\qquad
\th_j = \frac{j-n_\th}{2\pi N_\th}\,,\quad 0 \leq j \leq 2n_\th+N_\th\,.
\]
The corresponding normalized B-spline basis functions are denoted by $B_i(t)$ and $C_j(\theta)$. Note that all interior knots have multiplicity one, i.e., the splines are simple. Therefore, they have maximal regularity at the knots,
\begin{align*}
B_i \in C^{n_t-1}([0,1])\,, \qquad C_j \in C^{n_\theta-1}(S^1)\,, \qquad i=1,\dots,N_t\,, \qquad j=1,\dots,N_\theta\,.
\end{align*}
Let $\mathcal S^{n_t}_{N_t}$ denote the orthogonal projection from $H^{n_t}([0,1])$ onto the span of the basis functions $B_i$. Similarly, let $\mathcal S^{n_\theta}_{N_\theta}$ denote the orthogonal projection from $H^{n_\theta}(S^1)$ onto the span of the basis functions $C_j$. Then
\begin{align*}
\lim_{N_t\to\infty} \| \mathcal S^{n_t}_{N_t} f-f\|_{H^{n_t}([0,1])}=0\,, \qquad
\lim_{N_\theta\to\infty} \| \mathcal S^{n_\theta}_{N_\theta} g-g\|_{H^{n_\theta}(S^1)}=0\,,
\end{align*}
holds for each $f \in H^{n_t}([0,1])$ and each $g \in H^{n_\theta}(S^1)$. This is a well-known result on the approximation power of one-dimensional splines (c.f. Lem.~\ref{lem:onesplines}); a detailed analysis can be found in \cite{Schumaker2007}. 

The generalization of this statement to multiple dimensions involves tensor product splines and mixed-order Sobolev spaces. Tensor product splines are linear combinations of $B_i\otimes C_j$, where the basis functions $B_i$ are interpreted as functions of $t$ and $C_j$ as functions of $\theta$. To be explicit, a path of curves is represented as a tensor product B-spline with control points $c_{i,j} \in \R^d$ as follows:
\begin{equation}
\label{eq:TensorPath}
c(t, \th) = \sum_{i=1}^{N_t} \sum_{j=1}^{N_\th} c_{i,j} B_i(t) C_j(\th)\,.
\end{equation}
Sobolev spaces of mixed order are Hilbert spaces defined for each $k,\ell\in \mathbb N$ as
\begin{equation}\label{Eq:mixedorderSob}\begin{aligned}
H^{k,\ell}([0,1]\times S^1) = \big\{ f \in L^2([0,1]\times S^1): \exists f^{(k,0)},f^{(0,\ell)},f^{(k,\ell)} \in L^2([0,1]\times S^1)\big\}\,,\\
\langle f,g \rangle_{H^{k,\ell}} = 
\langle f,g\rangle_{L^2} 
+ \langle f^{(k,0)},g^{(k,0)}\rangle_{L^2} 
+ \langle f^{(0,\ell)},g^{(0,\ell)}\rangle_{L^2} 
+ \langle f^{(k,\ell)},g^{(k,\ell)}\rangle_{L^2} \,.
\end{aligned}
\end{equation}
Function spaces of this type were first defined in \cite{nikol1962boundary,nikol1965stable}. We refer to \cite{vybiral2006function} and  \cite{schmeisser2007recent} for detailed expositions and further references.  As before we define for each number of control points $N_t,N_\theta$ the spline approximation operator $\mathcal{S}^{n_t,n_\theta}_{N_t,N_\theta}$ to be the orthogonal projection from $H^{n_t,n_\theta}([0,1]\times S^1)$ onto the span of the tensor product splines $B_i\otimes C_j$. It can be shown that $\mathcal{S}^{n_t,n_\theta}_{N_t,N_\theta} = \mathcal{S}^{n_t}_{N_t} \otimes \mathcal S^{n_\theta}_{N_\theta}$.
\begin{lemma}\label{lem:tensorspline}
For each $n_t \geq k, n_\th\geq \ell$ and each $c \in H^{k,\ell}([0,1]\times S^1)$, 
\begin{align*}
\lim_{N_t,N_\theta\to\infty} \| c-\mathcal{S}^{n_t,n_\theta}_{N_t,N_\theta}c\|_{H^{k,\ell}([0,1]\times S^1)} = 0\,.
\end{align*} 
\end{lemma}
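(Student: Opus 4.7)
The plan is to exploit the tensor product factorization $\mathcal{S}^{n_t,n_\theta}_{N_t,N_\theta} = \mathcal{S}^{n_t}_{N_t} \otimes \mathcal{S}^{n_\theta}_{N_\theta}$, which is asserted just before the lemma, and reduce the two-dimensional statement to the one-dimensional approximation result (Lem.~\ref{lem:onesplines}). The starting point is the algebraic identity
\begin{equation*}
I - \mathcal{S}^{n_t}_{N_t} \otimes \mathcal{S}^{n_\theta}_{N_\theta}
= \bigl(I - \mathcal{S}^{n_t}_{N_t}\bigr) \otimes I
+ \mathcal{S}^{n_t}_{N_t} \otimes \bigl(I - \mathcal{S}^{n_\theta}_{N_\theta}\bigr),
\end{equation*}
which splits the error $c - \mathcal{S}^{n_t,n_\theta}_{N_t,N_\theta} c$ into two contributions, each of which mixes only one of the two projections with the identity in the other variable. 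It suffices to show that each contribution tends to zero in the $H^{k,\ell}$ norm.

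For the first contribution, the operator $(I - \mathcal{S}^{n_t}_{N_t}) \otimes I$ acts only in the $t$-variable and therefore commutes with $\partial_\theta^\ell$. Unfolding the definition of the $H^{k,\ell}$ norm in \eqref{Eq:mixedorderSob} and applying Fubini, the four $L^2$-contributions reduce to integrals over $\theta$ of expressions of the form
\begin{equation*}
\bigl\| (I - \mathcal{S}^{n_t}_{N_t})\, g(\cdot,\theta) \bigr\|_{H^{k}([0,1])}^2,
\end{equation*}
where $g \in \{c, \partial_\theta^\ell c\}$ and where for almost every $\theta$ the slice $g(\cdot,\theta)$ lies in $H^k([0,1])$. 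By the one-dimensional approximation lemma (Lem.~\ref{lem:onesplines}), each such integrand tends to zero pointwise a.e.\ as $N_t \to \infty$. Combined with the uniform boundedness of $\mathcal{S}^{n_t}_{N_t}$ on $H^k$ (another output of Lem.~\ref{lem:onesplines}, used to dominate the integrand by $(1+C)^2\|g(\cdot,\theta)\|_{H^k}^2 \in L^1_\theta$), Lebesgue's dominated convergence theorem yields the desired decay. The second contribution is treated analogously, swapping the roles of $t$ and $\theta$ and combining the $H^\ell(S^1)$-convergence of $\mathcal{S}^{n_\theta}_{N_\theta}$ with a uniform $H^k([0,1])$-bound on $\mathcal{S}^{n_t}_{N_t}$.

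The main technical obstacle is that $\mathcal{S}^{n_t}_{N_t}$ and $\mathcal{S}^{n_\theta}_{N_\theta}$ were introduced only as orthogonal projections on the full-regularity spaces $H^{n_t}$ and $H^{n_\theta}$, whereas the dominated convergence step requires them to be defined, uniformly bounded, and convergent on the lower-regularity spaces $H^k$ and $H^\ell$ with $k \le n_t$ and $\ell \le n_\theta$. Establishing this extension and uniform bound is the heart of the argument and must be packaged into Lem.~\ref{lem:onesplines}; once it is available, the rest of the proof is a routine tensorization via Fubini and dominated convergence.
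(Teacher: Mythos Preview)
Your approach is correct but differs substantively from the paper's. The paper proves the lemma by first establishing (Lem.~\ref{lem:tensor}) that $H^{k,\ell}([0,1]\times S^1)$ is isometrically isomorphic to the Hilbert tensor product $H^k([0,1])\,\widehat\otimes\,H^\ell(S^1)$, so that finite sums of simple tensors $\sum_i f_i\otimes g_i$ are dense (Cor.~\ref{cor:simpletensors}). Given $c$ and $\varepsilon>0$, one picks such a finite sum within $\varepsilon/2$ of $c$, applies the one-dimensional result (Lem.~\ref{lem:onesplines}) to each $f_i$ and $g_i$ separately to get a tensor-product spline within $\varepsilon/2$ of the finite sum, and closes with the best-approximation property of the orthogonal projection $\mathcal S^{n_t,n_\theta}_{N_t,N_\theta}$.

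Your route bypasses the tensor-product isomorphism entirely: you split the error operator as $(I-\mathcal S^{n_t}_{N_t})\otimes I + \mathcal S^{n_t}_{N_t}\otimes(I-\mathcal S^{n_\theta}_{N_\theta})$, write the $H^{k,\ell}$-norm as an iterated integral via Fubini, and apply the one-dimensional result slice by slice together with dominated convergence. The price is that you need \emph{uniform} $H^k$-boundedness of the one-dimensional projections (which, as you observe, follows from Lem.~\ref{lem:onesplines} via Banach--Steinhaus), whereas the paper leans on the best-approximation property instead. Your argument is more direct and elementary; the paper's is shorter once the tensor machinery of Lem.~\ref{lem:tensor} and Cor.~\ref{cor:simpletensors} is in place. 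Both proofs share, and neither resolves, the definitional issue you flag in your last paragraph---namely that $\mathcal S^{n_t}_{N_t}$ is introduced as the $H^{n_t}$-orthogonal projection yet must act on the larger space $H^k$.
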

The lemma is proven in App.~\ref{app:convergence} by showing that $H^{n_t,n_\theta}([0,1]\times S^1)$ is isometrically isomorphic to the  Hilbert space tensor product of $H^{n_t}([0,1])$ and $H^{n_\theta}(S^1)$.

\subsection{Discretization of the energy functional}
\label{sec:EnergyFunctional} 

The energy of a path of curves $c:[0,1]\times S^1\to\R^d$ is given by 
\begin{equation}
\label{eq:energyH2_explicit}
\begin{aligned}
E(c) = \int_0^1 G_c(\dot c, \dot c) \ud t 
&= \int_0^1 \int_0^{2\pi} a_0 |c'| \langle \dot c, \dot c \rangle + \frac{a_1}{|c'|} \langle \dot c', \dot c' \rangle
+ \frac{a_2}{|c'|^7} \langle c', c'' \rangle^2 \langle \dot c', \dot c' \rangle \\
&\qquad\quad
- \frac{2a_2}{|c'|^5} \langle c', c'' \rangle \langle \dot c', \dot c'' \rangle +
\frac{a_2}{|c'|^3} \langle \dot c'', \dot c'' \rangle
\ud \th \ud t\,,
\end{aligned}
\end{equation}
as can be seen by combining~\eqref{def:sobolev_metric2} and~\eqref{eq:EnergyFunctional}. 
In the following let $U$ denote the set of all paths $c \in H^{1,2}([0,1]\times S^1;\mathbb R^d)$ with nowhere vanishing spatial derivative, i.e., $c'(t,\theta)=\partial_\theta c(t,\theta)\neq 0$ holds for all $(t,\theta)\in[0,1]\times S^1$. Then $U$ is an open subset of $H^{1,2}([0,1]\times S^1;\mathbb R^d)$ because $H^{1,2}([0,1]\times S^1;\mathbb R^d)$ embeds continuously into $C^{0,1}([0,1]\times S^1;\mathbb R^d)$ by Lem.~\ref{lem:embedding}. The following lemma shows that the energy of a spline tends to the energy of the approximated curve as the number of control points tends to infinity.

\begin{lemma}\label{lem:continuity}
If $n_t\geq 1$ and $n_\theta\geq 2$, then 
\[
\lim_{N_t,N_\theta\to\infty} E(\mathcal S^{n_t,n_\theta}_{N_t,N_\theta} c)= E(c)
\]
holds for each $c \in U$. 
\end{lemma}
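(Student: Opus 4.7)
The plan is to derive the claim from Lem.~\ref{lem:tensorspline} together with continuity of $E:U\to\mathbb R$ in the $H^{1,2}$-topology. Applying Lem.~\ref{lem:tensorspline} with $k=1$ and $\ell=2$ (this uses exactly the hypothesis $n_t\geq 1$, $n_\theta\geq 2$) to the given $c\in U\subset H^{1,2}([0,1]\times S^1;\mathbb R^d)$, one obtains $\|c-c_N\|_{H^{1,2}}\to 0$ for $c_N:=\mathcal S^{n_t,n_\theta}_{N_t,N_\theta}c$. It therefore suffices to prove that $E$ is continuous at $c$ with respect to the $H^{1,2}$-topology.

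Next I would exploit the continuous embedding $H^{1,2}\hookrightarrow C^{0,1}$ of Lem.~\ref{lem:embedding}: it implies that $c_N\to c$ and $c_N'\to c'$ uniformly on the compact domain $[0,1]\times S^1$. Since $c\in U$, there is $\ep>0$ with $|c'|\geq \ep$, hence $|c_N'|\geq \ep/2$ for all sufficiently large $N_t,N_\theta$; in particular $c_N\in U$ and $|c_N'|^{-m}\to |c'|^{-m}$ uniformly for every $m\in\mathbb N$ appearing in~\eqref{eq:energyH2_explicit}.

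The continuity of $E$ at $c$ is then established term by term. Each integrand in \eqref{eq:energyH2_explicit} splits as $\Phi_N(\theta,t)\,\Psi_N(\theta,t)$, where $\Phi_N$ is a rational function of $c'_N$ with denominator $|c'_N|^m$ and $\Psi_N$ is a polynomial contraction of the higher derivatives $c_N''$, $\dot c_N$, $\dot c_N'$, $\dot c_N''$. By the previous step, $\Phi_N\to\Phi$ uniformly, so it remains to show $\Psi_N\to\Psi$ in $L^1([0,1]\times S^1)$; by a standard polarization, this follows from convergence of each factor in an appropriate function space and Cauchy--Schwarz. The $L^2$-convergences $\dot c_N\to \dot c$, $c_N''\to c''$, $\dot c_N'\to \dot c'$, $\dot c_N''\to \dot c''$ are all immediate from $c_N\to c$ in $H^{1,2}$.

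The delicate term will be $|c'|^{-7}\langle c',c''\rangle^2 |\dot c'|^2$, since $c''$ and $\dot c'$ are only $L^2$ on the full domain and their pointwise square product need not be integrable. To handle this I would invoke the anisotropic embeddings $H^{1,2}\hookrightarrow L^\infty_t H^2_\theta\cap H^1_t H^2_\theta$ obtained by applying 1D Sobolev embeddings separately in the two variables: the bound $c''\in L^\infty_t L^2_\theta$ follows from $c_{\theta\theta}\in H^1_t L^2_\theta\hookrightarrow C([0,1];L^2(S^1))$, and the bound $\dot c'\in L^2_t L^\infty_\theta$ follows from $c_t\in L^2_t H^2_\theta\hookrightarrow L^2_t C^1(S^1)$. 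Convergence of $c_N$ to $c$ in $H^{1,2}$ produces convergence in these mixed spaces as well, so
\[
\int_0^1\!\!\int_0^{2\pi}|c_N''|^2|\dot c_N'|^2\,\mathrm{d}\theta\,\mathrm{d}t \;\le\; \|c_N''\|_{L^\infty_t L^2_\theta}^2 \,\|\dot c_N'\|_{L^2_t L^\infty_\theta}^2
\]
remains uniformly bounded and the relevant $L^1$-convergence follows. Summing over all terms of \eqref{eq:energyH2_explicit} yields $E(c_N)\to E(c)$, completing the proof.
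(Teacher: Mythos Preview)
Your proof is correct and follows exactly the paper's strategy: spline convergence in $H^{1,2}$ via Lem.~\ref{lem:tensorspline}, then continuity of $E$ on $U$ in the $H^{1,2}$-topology. The paper simply asserts this continuity without proof, whereas you supply the details; note that your anisotropic-embedding argument for the term $|c'|^{-7}\langle c',c''\rangle^2|\dot c'|^2$ equally handles the cross-term $|c'|^{-5}\langle c',c''\rangle\langle \dot c',\dot c''\rangle$, which you did not explicitly single out but which also involves a product of $c''$, $\dot c'$, and $\dot c''$ that is not controlled by plain $L^2$-bounds alone.
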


\begin{proof}
By Lem.~\ref{lem:tensorspline} the spline approximations $\mathcal{S}^{n_t,n_\theta}_{N_t,N_\theta}c$ converge to $c$ in $H^{1,2}([0,1]\times S^1)$. As $U$ is open, $E(\mathcal S^{n_t,n_\theta}_{N_t,N_\theta} c)$ is well-defined for $N_t,N_\theta$ sufficiently large. The convergence $E(\mathcal S^{n_t,n_\theta}_{N_t,N_\theta} c)\to E(c)$ follows from the $H^{1,2}$-continuity of the energy functional. 
\end{proof}

To discretize the integrals in the definition of the energy functional we use Gaussian quadrature with $m_t$ and $m_\th$ quadrature points on each interval between consecutive knots. The total number of quadrature points is therefore $M_t = m_tN_t$ in time and $M_\th = m_\th N_\th$ in space, and the discrete approximations of the Lebesgue measures on $[0,1]$ and $S^1$ are
\begin{align*}
\mu^{m_t}_{N_t} = \sum_{i=1}^{M_t} w_i \delta_{\bar t_i}, \qquad
\nu^{m_\theta}_{N_\theta} = \sum_{j=1}^{M_\theta} \om_j \delta_{\bar \theta_j}, 
\end{align*}
where $w_i, \om_j$ are the Gaussian quadrature weights and $\bar t_i, \bar \theta_j$ the Gaussian quadrature points. We define the discretized energy $E^{m_t,m_\theta}_{N_t,N_\theta}(c)$ of a curve $c \in C^{1,2}([0,1]\times S^1)\cap U$ to be given by the right-hand side of \eqref{eq:energyH2_explicit} with $\mathrm dt \ud \theta$ replaced by $\mu^{m_t}_{N_t}(\mathrm dt)\nu^{m_\theta}_{N_\theta}(\mathrm d\theta)$. The following theorem shows that the discretized energy of a path tends to the energy of the approximated path as the number of control points go to infinity, provided that the path is smooth enough.

\begin{theorem}
If $n_t\geq 2$, $n_\theta\geq 3$, and $m_t,m_\theta\geq 1$, then 
\begin{equation*}
\lim_{N_t,N_\theta\to\infty} E^{m_t,m_\theta}_{N_t,N_\theta}(\mathcal S^{n_t,n_\theta}_{N_t,N_\theta} c)= E(c)
\end{equation*}
holds for each $c \in U\cap H^{2,3}([0,1]\times S^1)$. 
\end{theorem}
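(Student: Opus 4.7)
The plan is to decompose the error via the triangle inequality, writing
\[
\bigl|E^{m_t,m_\theta}_{N_t,N_\theta}(c_N) - E(c)\bigr|
\leq \bigl|E^{m_t,m_\theta}_{N_t,N_\theta}(c_N) - E^{m_t,m_\theta}_{N_t,N_\theta}(c)\bigr|
+ \bigl|E^{m_t,m_\theta}_{N_t,N_\theta}(c) - E(c)\bigr|\,,
\]
where $c_N:=\mc S^{n_t,n_\theta}_{N_t,N_\theta} c$. The first term compares two discrete energies on a common quadrature grid, and the second is the Gaussian quadrature error applied to the fixed curve $c$. I would avoid the more obvious split through $E(c_N)$, since analyzing the quadrature error against the moving spline $c_N$ would require uniform bounds on derivatives of the spline of order higher than the regularity of $c$, which need not hold as $N_t,N_\theta\to\infty$.

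The first step is to promote $c_N\to c$ from $H^{1,2}$ to $H^{2,3}$. Since $c\in H^{2,3}$ and $n_t\geq 2,n_\theta\geq 3$, Lem.~\ref{lem:tensorspline} with $k=2,\ell=3$ gives $\|c_N-c\|_{H^{2,3}}\to 0$. Combined with a mixed-order Sobolev embedding $H^{2,3}([0,1]\x S^1)\hookrightarrow C^{1,2}([0,1]\x S^1)$ in the spirit of Lem.~\ref{lem:embedding}, this yields uniform convergence of $c_N,c_N',c_N'',\dot c_N,\dot c_N',\dot c_N''$ to the corresponding derivatives of $c$ on the compact set $[0,1]\x S^1$. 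The hypothesis $c\in U$ and compactness give $|c'|\geq \ep$ for some $\ep>0$, and uniform convergence of $c_N'$ yields $|c_N'|\geq \ep/2$ for $N_t,N_\theta$ large.

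For the first term I would denote the integrand of~\eqref{eq:energyH2_explicit} applied to a path $\ga$ by $f[\ga](t,\theta)$; it is a smooth function of the quantities $\ga',\ga'',\dot\ga,\dot\ga',\dot\ga''$ and of $1/|\ga'|$. On the relevant range all these quantities converge uniformly and $|\ga'|$ is uniformly bounded below, so $\|f[c_N]-f[c]\|_\infty\to 0$. The quadrature measure $\mu^{m_t}_{N_t}\otimes\nu^{m_\theta}_{N_\theta}$ has nonnegative weights summing to $2\pi$, hence
\[
\bigl|E^{m_t,m_\theta}_{N_t,N_\theta}(c_N) - E^{m_t,m_\theta}_{N_t,N_\theta}(c)\bigr| \leq 2\pi\,\|f[c_N]-f[c]\|_\infty \to 0\,.
\]

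For the second term, $f[c]$ is continuous on the compact domain $[0,1]\x S^1$ by the same embedding plus the lower bound on $|c'|$. A standard convergence result for composite tensor-product Gaussian quadrature on uniform meshes of widths $1/N_t$ and $2\pi/N_\theta$ with $m_t,m_\theta\geq 1$ nodes per cell, applied to a continuous integrand, yields $E^{m_t,m_\theta}_{N_t,N_\theta}(c)\to E(c)$. Combining the two bounds finishes the proof. The main technical obstacle is the mixed-order embedding $H^{2,3}\hookrightarrow C^{1,2}$, which powers the uniform control on the nonlinear integrand; once it is available, the remainder reduces to elementary continuity arguments together with classical one-dimensional Gaussian quadrature convergence.
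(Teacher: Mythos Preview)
Your proposal is correct and follows the paper's proof closely: the same triangle-inequality split through $E^{m_t,m_\theta}_{N_t,N_\theta}(c)$, and the same handling of the first term via $H^{2,3}$-convergence of the spline projections (Lem.~\ref{lem:tensorspline}), the embedding $H^{2,3}\hookrightarrow C^{1,2}$ (Lem.~\ref{lem:embedding}), and continuity of the integrand on $C^{1,2}\cap U$.

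The one genuine difference is the second (quadrature) term. The paper uses the quantitative first-order error bound for composite Gaussian quadrature, $\left|\int f\,(\mu^{m_t}_{N_t}-\mathrm dt)\right|\leq K N_t^{-1}\|f'\|_{C}$, which forces it to invoke continuity of $\partial_t F(c)$ and $\partial_\theta F(c)$. You instead use only that composite Gaussian quadrature with positive weights converges for every continuous integrand as the mesh width goes to zero---an elementary consequence of uniform continuity and exactness on constants. Your route is cleaner here: it needs only $F(c)\in C([0,1]\times S^1)$, which is immediate from $c\in C^{1,2}$ and $|c'|\geq\ep>0$, whereas the paper's bound nominally requires one more derivative of $F(c)$ in each variable. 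The paper's version, on the other hand, gives an explicit rate $O(N_t^{-1}+N_\theta^{-1})$ for this term when that extra regularity is available.
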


\begin{proof}
The total error can be decomposed into a spline approximation error and a quadrature error:
\begin{equation}\label{equ:triangle}
|E^{m_t,m_\theta}_{N_t,N_\theta}(\mathcal S^{n_t,n_\theta}_{N_t,N_\theta} c)-E(c)|
\leq
|E^{m_t,m_\theta}_{N_t,N_\theta}(\mathcal S^{n_t,n_\theta}_{N_t,N_\theta} c)-E^{m_t,m_\theta}_{N_t,N_\theta}(c)|+|E^{m_t,m_\theta}_{N_t,N_\theta}(c)-E(c)|\,.
\end{equation}
To show that the first summand on the right-hand side tends to zero, note that the spline approximations $\mathcal S^{n_t,n_\theta}_{N_t,N_\theta} c$ converge to $c$ in $H^{2,3}([0,1]\times S^1)$ by Lem.~\ref{lem:tensorspline}. They also converge in $C^{1,2}([0,1]\times S^1)$ by Lem.~\ref{lem:embedding}. Let $F(c)$ denote the integrand in \eqref{eq:energyH2_explicit}. Then $F$ is locally Lipschitz continuous when seen as a mapping from $U\cap C^{1,2}([0,1]\times S^1)$ to $C([0,1]\times S^1)$. Let $L$ denote the Lipschitz constant of $F$ near $c$. Then the first summand in \eqref{equ:triangle} can be estimated for sufficiently large $N_t,N_\theta$ via
\begin{equation*}\begin{aligned}
|E^{m_t,m_\theta}_{N_t,N_\theta}(\mathcal S^{n_t,n_\theta}_{N_t,N_\theta} c)-E^{m_t,m_\theta}_{N_t,N_\theta}(c)|
&\leq
\iint |F(\mathcal S^{n_t,n_\theta}_{N_t,N_\theta} c)-F(c)|\mu^{m_t}_{N_t}\nu^{m_\theta}_{N_\theta}
\\&\leq 
L \|\mathcal S^{n_t,n_\theta}_{N_t,N_\theta} c-c\|_{C^{1,2}([0,1]\times S^1)} \to 0\,.
\end{aligned}\end{equation*}
It remains to show that the second summand in \eqref{equ:triangle} tends to zero. As the Gaussian quadrature rules $\mu^{m_t}_{N_t}$ and $\nu^{m_\theta}_{N_\theta}$ are of order $m_t,m_\theta\geq 1$, there is $K>0$ such that the following estimates hold for all $f \in C^1([0,1])$ and $g\in C^1(S^1)$:
\begin{equation*}
\int_{[0,1]}f(t)\big(\mu^{m_t}_{N_t}(\mathrm dt)-\mathrm dt\big) \leq K N_t^{-1} \|f'\|_{C([0,1])}\,, \quad
\int_{S^1}g(\theta)\big(\nu^{m_\theta}_{N_\theta}(\mathrm d\theta)-\mathrm d\theta\big) \leq K N_\theta^{-1} \|g'\|_{C(S^1)}\,. 
\end{equation*}
See e.g. \cite[Thm.~4.3.1]{brass2011quadrature} for this well-known result. Therefore, the second summand in \eqref{equ:triangle} satisfies
\begin{equation}\begin{aligned}
&|E^{m_t,m_\theta}_{N_t,N_\theta}(c)-E(c)|
=
\left|\iint F(c)(t,\theta)(\mu^{m_t}_{N_t}(\mathrm dt)\nu^{m_\theta}_{N_\theta}(\mathrm d\theta)-\mathrm d t\ud \theta)\right|
\\&\qquad\leq
\left|\iint F(c)(t,\theta)\big(\mu^{m_t}_{N_t}(\mathrm dt)-\mathrm dt\big)\nu^{m_\theta}_{N_\theta}(\mathrm d\theta)\right|
+\left|\iint F(c)(t,\theta)\ud t \big(\nu^{m_\theta}_{N_\theta}(\mathrm d\theta)-\mathrm d\theta\big)\right|
\\&\qquad\leq
K N_t^{-1}\|\partial_t F(c)\|_{C([0,1]\times S^1)} 
+ K N_\theta^{-1}\|\partial_\theta F(c)\|_{C([0,1]\times S^1)}  \to 0\,.
\end{aligned}\end{equation}
This shows that the total error \eqref{equ:triangle} tends to zero as $N_t,N_\theta$ tend to infinity.
\end{proof}

\begin{figure}
\includegraphics[width=0.12\textwidth]{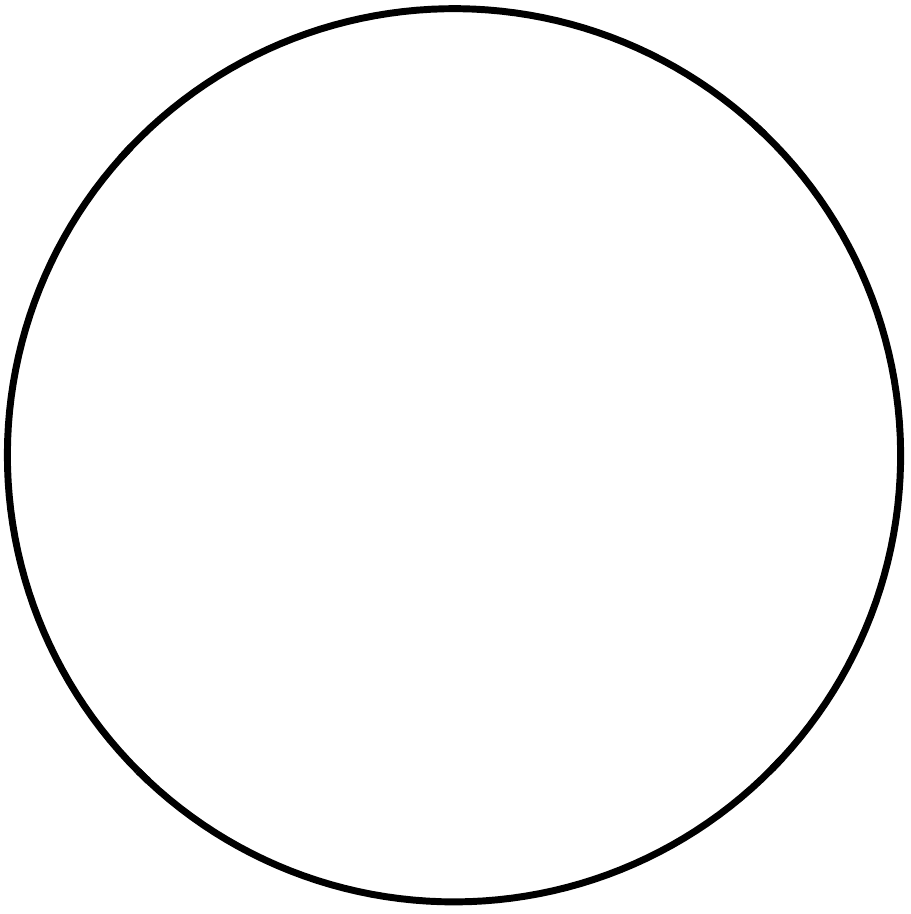}
\includegraphics[width=0.12\textwidth]{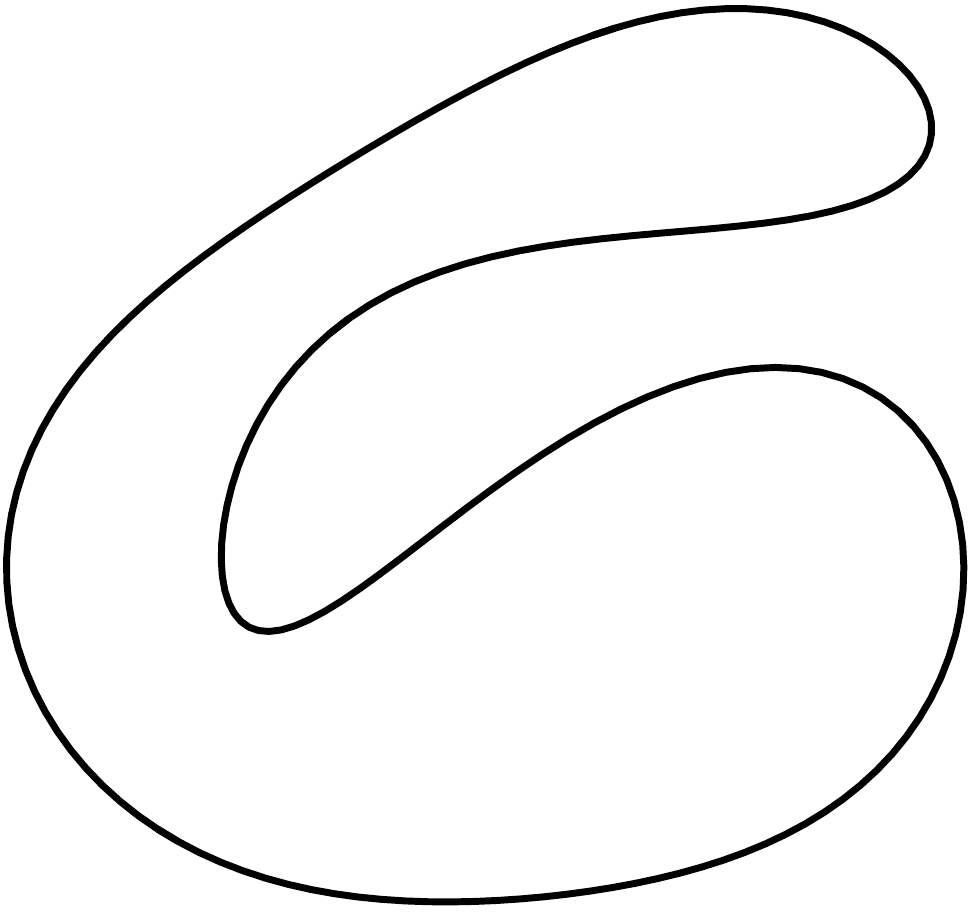}
\includegraphics[width=0.12\textwidth]{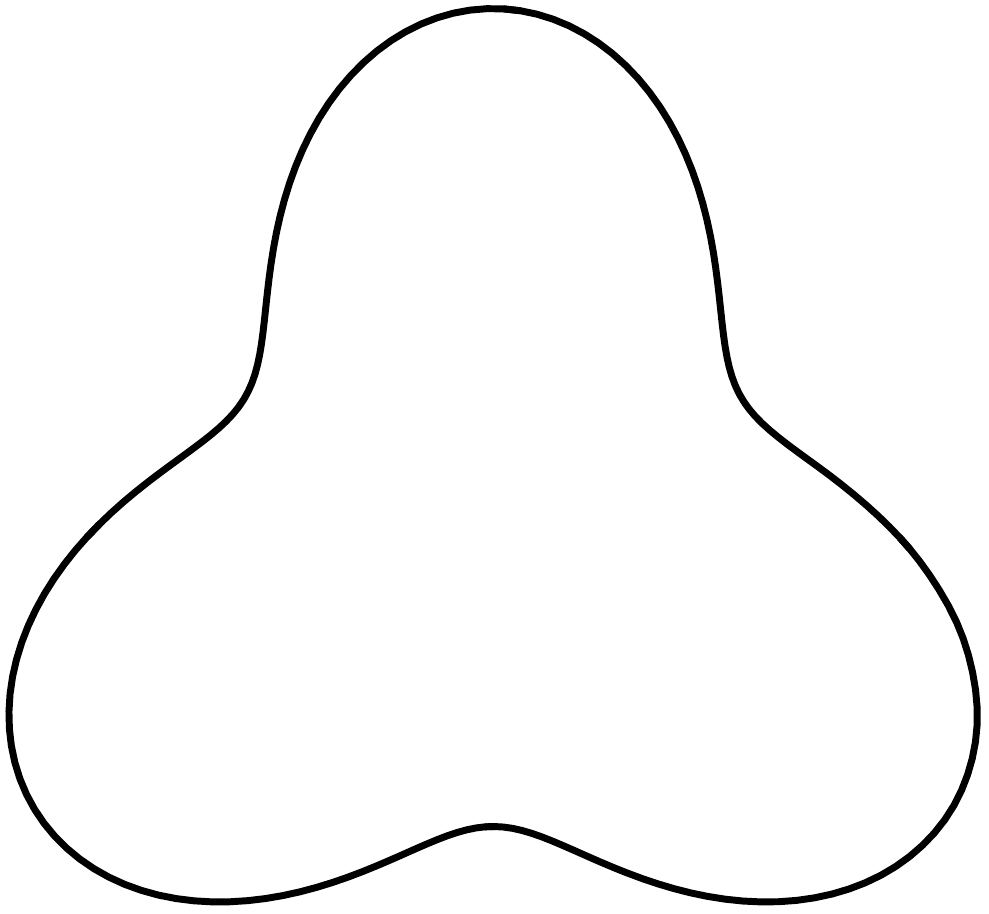}
\includegraphics[width=0.12\textwidth]{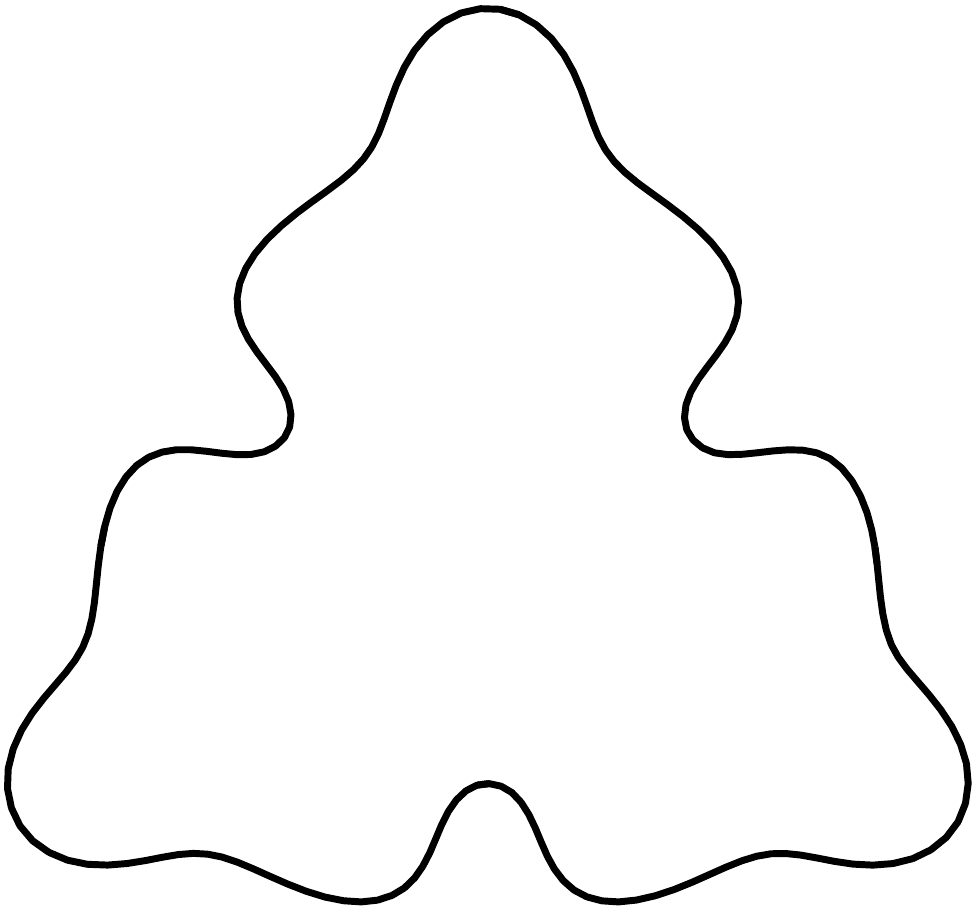}
\includegraphics[width=0.12\textwidth]{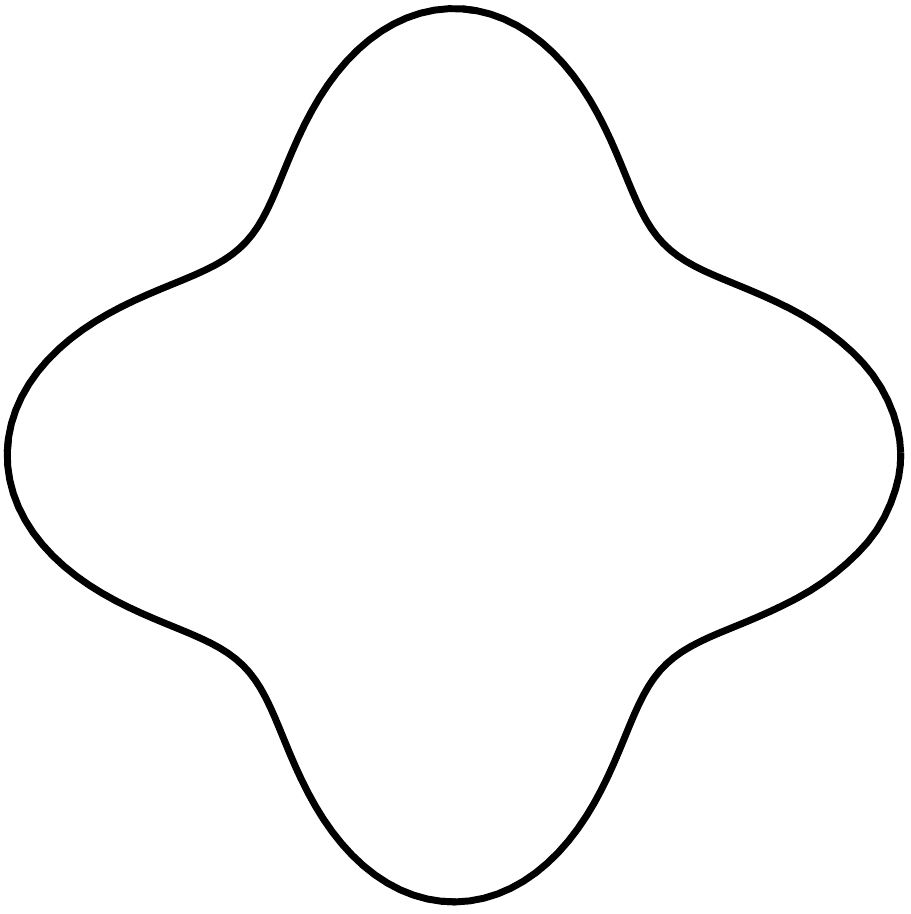}
\includegraphics[width=0.12\textwidth]{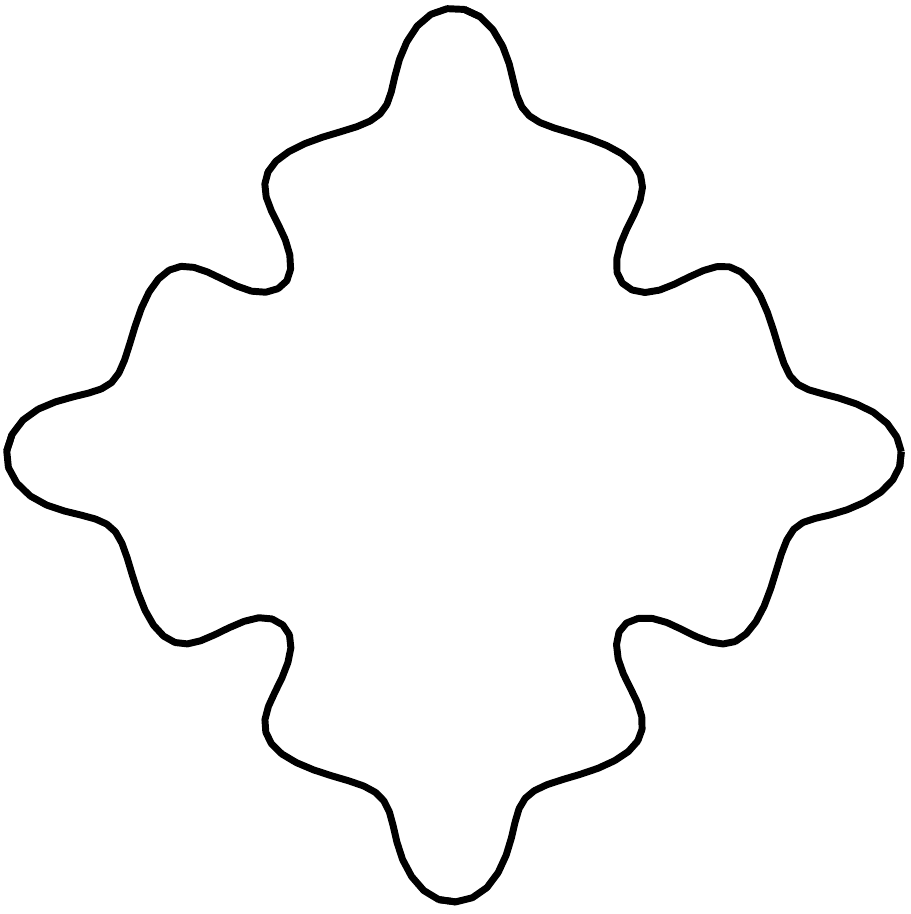}
\includegraphics[width=0.12\textwidth]{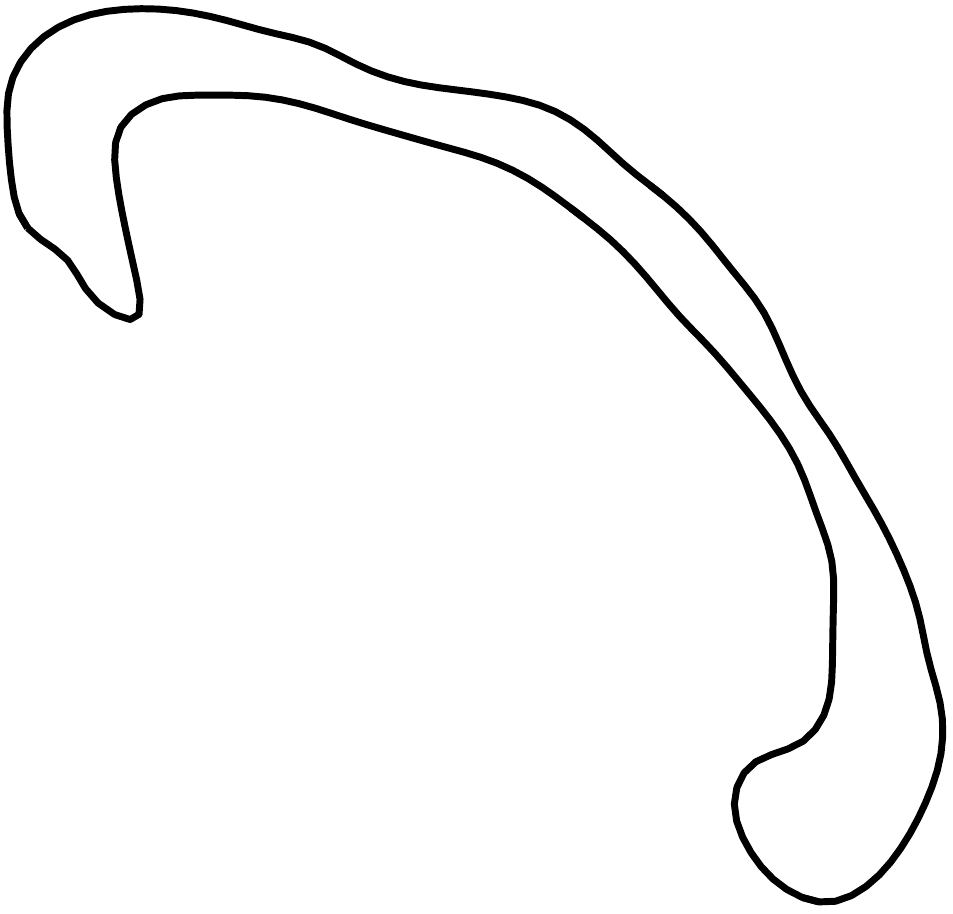}
\includegraphics[width=0.12\textwidth]{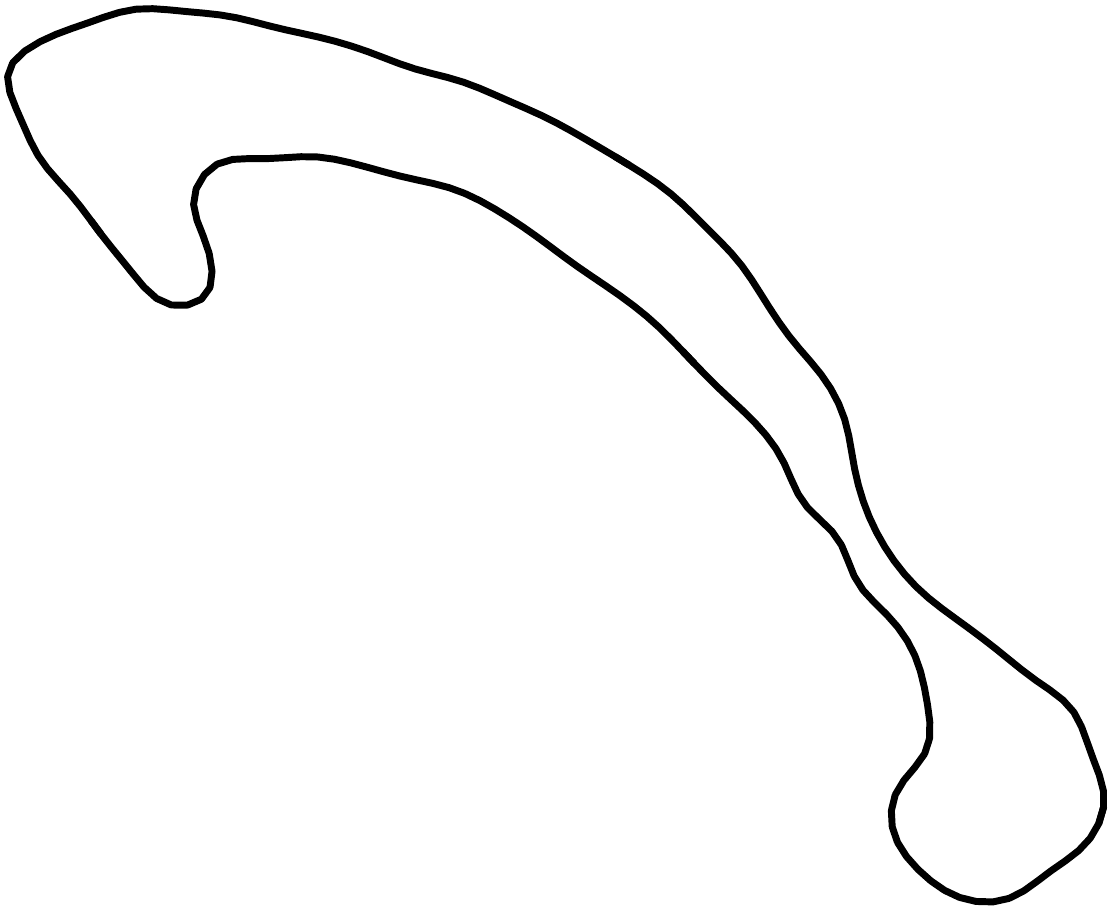}
\caption[Curves that are used in the remainder of the section to test convergence of the proposed algorithms: circle, wrap, 3- and 4-bladed propellers without and with noise and two corpus callosum shapes.]{Curves that are used in the remainder of the section to test convergence of the proposed algorithms: circle, wrap, 3- and 4-bladed propellers without and with noise, and two corpus callosum shapes.\footnotemark}
\label{fig:basic_curves}
\end{figure}
\footnotetext{The acquisition of the corpus callosum shapes is described in \cite{kucharsky2015corpus}.}

To confirm this theoretical result, we run a series of numerical experiments to test the convergence of the discrete energy, whose results are displayed in Fig.~\ref{fig:con_energy}.
The set of basic curves that we will use throughout the whole section in all numerical experiments is displayed in Fig.~\ref{fig:basic_curves}. 

\begin{figure}
\includegraphics[width=0.49\textwidth]{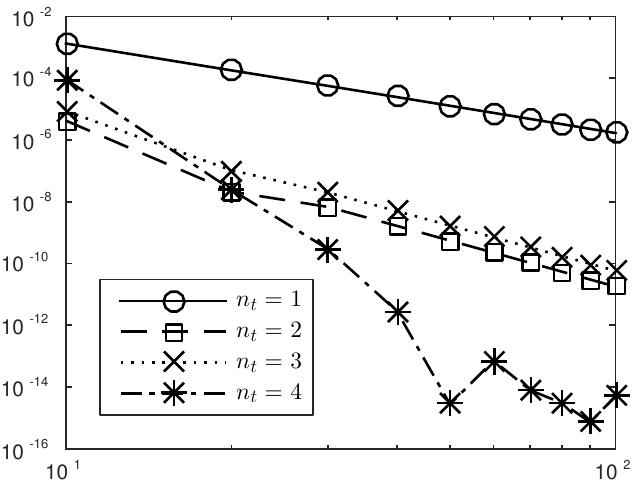}
\includegraphics[width=0.49\textwidth]{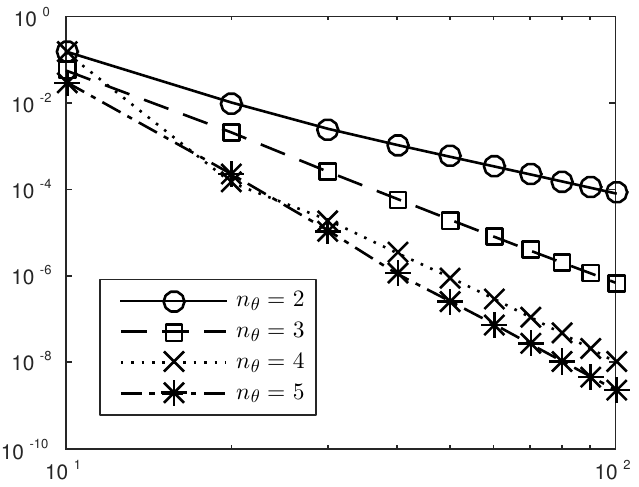}
\caption{Convergence of the discrete energy: relative energy differences for increasing number of control points of the non-linear path 
$c(t,\theta)=c_0(\theta) \sin(1 -t\pi/2)) + c_1(\theta) \sin(t\pi/2)$ connecting the circle $c_0$ to the wrap $c_1$. Left: varying $N_t$ with fixed $n_{\theta}=4,\; N_{\theta}=60$. Right: varying $N_\th$ and fixed $n_{t}=3,\; N_{t}=20$.
}
\label{fig:con_energy}
\end{figure}

\subsection{Boundary value problem for parameterized curves}

Solving the geodesic boundary problem means, for given boundary curves $c_0$ and $c_1$, to find a path $c$ which is a (local) minimum of the energy functional~\eqref{eq:EnergyFunctional} among all paths with the given boundary curves. For existence of minimizers see Theorem \ref{thm:completenessImm}. We will assume that the curves $c_0, c_1$ are discretized, i.e., given as linear combinations of the basis functions $C_j$. Should the curves be given in some other form, one would first approximate them by splines using a suitable approximation method.

The choice of full multiplicity for the boundary knots (in $t$) implies that the identity~\eqref{eq:TensorPath} for $t\in\{0,1\}$ and a spline path $c$ becomes
\begin{equation*}
c(0, \th) = \sum_{j=1}^{N_\th} c_{1,j} C_j(\th) \,, \quad c(1, \th) = \sum_{j=1}^{N_\th} c_{N_t,j} C_j(\th)\,.
\end{equation*}
If the controls $c_{1,j}$ and $c_{N_t,j}$ are fixed, then  \eqref{eq:TensorPath} defines a family of paths between between the boundary curves $c_0(\th) = \sum_{j=1}^{N_\th} c_{1,j} C_j(\th)$ and $c_1(\th) = \sum_{j=1}^{N_\th} c_{N_t,j} C_j(\th)$. The family is indexed by the remaining control points $c_{2,j},\dots,c_{N_t-1, j}$. 
Discretizing the energy functional as described in Sect.~\ref{sec:EnergyFunctional} transforms the geodesic boundary value problem to the finite-dimensional optimization problem
\begin{align}
\label{eq:unconstrainedEMinimization}
{\on{arg min}} \; E_{\on{discr}}(c_{2,1},\ldots, c_{N_{t-1},N_{\theta}})\,.
\end{align}
where $E_{\on{discr}}$ denotes the discretized energy functional $E^{m_t,m_\theta}_{N_t,N_\theta}$ applied to the spline defined by the control points $c_{i,j}$.
This finite-dimensional minimization problem can be solved by conventional black-box methods, specifically we use Matlab's \texttt{fminunc} function.
 To speed up the optimization we analytically calculated the gradient and Hessian of the energy functional $E$. We notice that
\begin{equation*}
\frac{\partial E_{\on{discr}}}{\partial c_{i,j}} = dE_c(B_i(t) C_j(\th) )\,.
\end{equation*}
The formulas for the derivative and the Hessian are provided in App.~\ref{sec:AppendixEnergyDerivatives}. 

\begin{figure}
\includegraphics[width=0.49\textwidth]{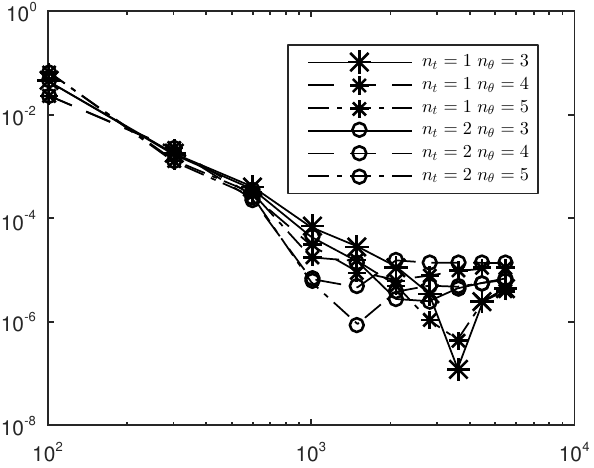}
\includegraphics[width=0.49\textwidth]{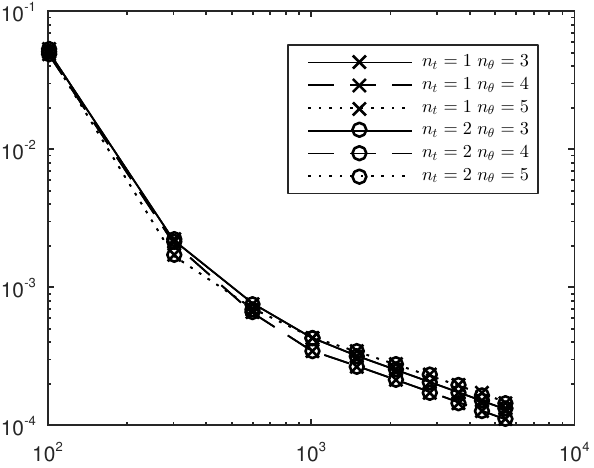}
\caption{Left/Right: Relative energy difference $\frac{|E_{i} - E_{i-1}|}{E_{i-1}}$ and $L^2$-distance $\frac{\|c_{i} - c_{i-1}\|_{L^2}}{\|c_{i-1}\|_{L^2}}$, for the propeller shapes, as a function of increasing number of control points. The values of $(N_t,N_\theta)$ are $(10,10),(15,20), \dots,(60,110)$.} 
\label{fig:geodesicdistance_conv}
\end{figure}

\emph{\begin{remark}
\emph{~For gradient-based optimization methods to work one must provide an initial path. An obvious choice for a path between two curves $c_0, c_1$ is the linear path $(1-t)c_0 + t c_1$. This path can always be constructed, but it is not always a valid initial path for the optimization procedure. For plane curves the space $\Imm(S^1,\R^2)$ is disconnected with the winding number of a curve determining the connected component \cite{Kodama2006}. The metric~\eqref{eq:sobolev_metric} is defined only for immersions, and a path leaving the space of immersion -- for example as it passes from one connected component to another -- will lead to a blow up of the energy~\eqref{eq:EnergyFunctional}. Hence an initial path connecting two curves must not leave $\on{Imm}(S^1,\R^d)$. For most examples considered in this paper the linear path is a valid initial guess; for more complicated cases a different strategy might be needed.
}\end{remark}}

\emph{\begin{remark}
\emph{~Note that the tensor product structure in \eqref{eq:TensorPath} allows us to evaluate $\on{Log}_{c_0}c_1$ by taking a time derivative of the path $c(t,\theta)$ and evaluating it at $t=0$ to obtain $\on{Log}_{c_0}c_1 = \p_t c(0,\cdot)$, where $c$ is a solution of the geodesic boundary value problem.
}\end{remark}}

\begin{figure}
\includegraphics[width=0.49\textwidth]{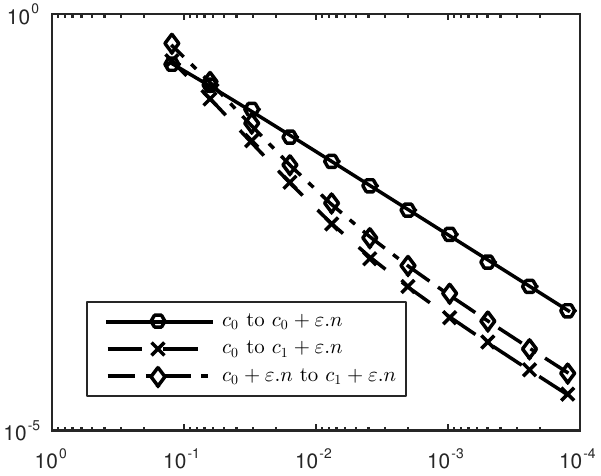}
\includegraphics[width=0.49\textwidth]{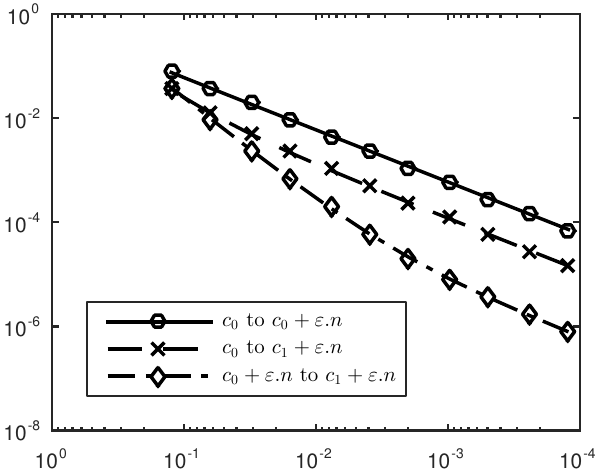}
\caption{Continuity of the geodesic distance function. Left: $c_0, c_1$ are 3- and 4-bladed propeller shapes, perturbed by a sinusoidal displacement in the normal direction of the curve. Right: $c_0, c_1$ are corpus callosum shapes with the perturbation applied directly to the control points. The plots show the relative change in distance against the amplitude of the sinusoidal noise $\ep$, i.e., $\on{dist}(c_0, c_1 + \ep.n) / \on{dist}(c_0, c_1)$.
}
\label{fig:continuity}
\end{figure}

Now we prove a result about $\Gamma$-convergence of the discrete energy functionals. Before stating the theorem, we set up some notation.
For brevity we denote the spline approximation operators in Lemma \ref{lem:tensorspline} by $\mathcal S_N$ with $N = (N_t,N_\th)$, the space $H^{1,2}([0,1]\times S^1;\mathbb R^d)$ by $H^{1,2}$, and the space $H^2(S^1;\mathbb R^d)$ by $H^2$. Fix $c_0, c_1 \in H^2$ and let $\Omega_{c_0,c_1}H^{1,2}$ denote the weakly closed subset of paths $c \in H^{1,2}$ with $c(0,\cdot) = c_0$ and $c(1,\cdot) = c_1$. We define the following discrete energy functionals on the set $U = \left\{ c \in H^{1,2} \,:\, c'(t,\th) \neq 0\,\forall (t,\th) \right\} $:
\begin{equation*}
E_{N,\Omega}(c) = \begin{cases}
E(c)\,,& c \in \mathcal S_N\left(\Omega_{c_0, c_1} H^{1,2}\right) \\
\infty\,,& \text{otherwise}
\end{cases}\,.
\end{equation*}
In other words, $E_{N,\Om}(c)$ equals $E(c)$, if $c$ is a spline path connecting the spline approximations $\mathcal S_{N_\th}(c_0)$, $\mathcal S_{N_\th}(c_1)$ of $c_0$, $c_1$, and equals $\infty$ otherwise. We will show that the limit of these functionals as $(N_t,N_\th) \to \infty$ is
\begin{equation*}
E_\Omega(c) = \begin{cases}
E(c)\,,& c \in \Omega_{c_0, c_1} H^{1,2} \\
\infty\,,& \text{otherwise}
\end{cases}\,.
\end{equation*}
Our result is the following.
\begin{theorem}
Let $n_t\geq 1$ and $n_\theta\geq 2$. Then the discretized energy functionals $E_{N,\Omega}$ are equi-coercive on $U$ with respect to the weak $H^{1,2}$-topology and $\Gamma$-converge on $U$ with respect to the weak $H^{1,2}$-topology to the energy functional $E_\Omega$ as $N = (N_t,N_\theta)\to\infty$. It follows that every sequence of minimizers of the discretized energy functionals $E_{N,\Omega}$ has a subsequence that converges weakly to a minimizer of $E_\Omega$. 
\end{theorem}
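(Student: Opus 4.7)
The plan is to verify the three standard ingredients of $\Gamma$-convergence---the liminf inequality, existence of a recovery sequence, and equi-coercivity---from which the convergence of minimizers follows by the fundamental theorem of $\Gamma$-convergence. I would start with the \textbf{recovery sequence}: given $c \in \Omega_{c_0,c_1}H^{1,2}\cap U$, begin with the tensor-product spline approximation $\mathcal S_N c$ from Lem.~\ref{lem:tensorspline} and replace its first and last $t$-columns of control points so that the boundary traces become $\mathcal S^{n_\theta}_{N_\theta} c_0$ and $\mathcal S^{n_\theta}_{N_\theta} c_1$, respectively. Since the one-dimensional spline approximations $\mathcal S^{n_\theta}_{N_\theta} c_i$ converge to $c_i$ in $H^2(S^1)$ and the boundary basis functions $B_1, B_{N_t}$ have $H^1([0,1])$-norms tending to zero, this correction tends to zero in $H^{1,2}$. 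Thus $c_N\to c$ strongly in $H^{1,2}$; openness of $U$ ensures $c_N\in U$ eventually, and Lem.~\ref{lem:continuity} gives $E(c_N)\to E(c)$.

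For the \textbf{liminf inequality}, I would consider $c_N \rightharpoonup c$ in $H^{1,2}$ with $\liminf E_{N,\Omega}(c_N)<\infty$; along a subsequence, $c_N\in\mathcal S_N(\Omega_{c_0,c_1}H^{1,2})\cap U$ and $E(c_N)$ is bounded. Weak continuity of the trace $c\mapsto(c(0,\cdot),c(1,\cdot))$ combined with the $H^2$-convergence $\mathcal S^{n_\theta}_{N_\theta}c_i\to c_i$ places $c$ in $\Omega_{c_0,c_1}H^{1,2}$. The continuous embedding $H^{1,2}\hookrightarrow C^{0,1}$ (Lem.~\ref{lem:embedding}) together with Rellich compactness in the $\theta$-variable yields $c'_N\to c'$ in $C^0$ and $c''_N\to c''$ in $L^2$, while $\dot c_N,\dot c'_N,\dot c''_N$ converge only weakly in $L^2$. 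The integrand in~\eqref{eq:energyH2_explicit} is a positive-definite quadratic form in $(\dot c,\dot c',\dot c'')$ with coefficients continuous in $(c',c'')$, so Ioffe's lower semi-continuity theorem gives $\liminf E(c_N)\geq E(c)$, provided the weak limit $c$ lies in $U$.

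\textbf{Equi-coercivity} will be the heart of the argument and the main obstacle. For $c$ with $E_{N,\Omega}(c)\leq k$, the prescribed boundary splines $\mathcal S^{n_\theta}_{N_\theta}c_i$ are uniformly bounded in $H^2(S^1)$, and the energy bound controls $\int_0^1 G_{c(t)}(\dot c,\dot c)\,\mathrm dt$. On paths where $|c'|$ is bounded above and below, $G_{c(t)}$ is equivalent to the $\theta$-parametrized $H^2(S^1)$ inner product, so integrating $\dot c$ in $t$ from the controlled boundary data produces a uniform bound on $\|c(t,\cdot)\|_{H^2(S^1)}$ and hence on $\|c'\|_{L^\infty}$. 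The delicate point is the lower bound $|c'|\geq\delta(k)>0$, without which the weak limit could leave $U$ and the liminf argument would fail at the limit point; I would extract this bound from Thm.~\ref{thm:completenessImm}, since $G$-energy dominates the squared geodesic distance, so bounded-energy paths between two fixed $H^2$-immersions stay in a ball of $\mathcal I^2(S^1,\mathbb R^d)$, where $|c'|$ remains uniformly positive. Combining these estimates yields a uniform $H^{1,2}$-bound while keeping the weak limit in $U$, so sublevel sets are relatively weakly compact in $U$, and the fundamental theorem of $\Gamma$-convergence then delivers the convergence of minimizers.
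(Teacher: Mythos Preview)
Your overall architecture matches the paper's exactly: equi-coercivity, liminf inequality, recovery sequence, then the fundamental theorem of $\Gamma$-convergence. The liminf and equi-coercivity parts are essentially the paper's argument; the paper simply quotes the sequential weak lower semicontinuity of $E$ from \cite{Bruveris2014b_preprint} where you invoke Ioffe, and for equi-coercivity the paper cites the specific estimates \cite[Prop.~3.5 and Lem.~4.2]{Bruveris2014b_preprint} (uniform comparison of $G_c$ with $\|\cdot\|_{H^2}$ and of geodesic with $H^2$-distance on metric balls) to produce a uniform $H^{1,2}$-bound, which is precisely the mechanism you sketch via Thm.~\ref{thm:completenessImm}.

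There is, however, a genuine error in your recovery-sequence construction. Your claim that the boundary basis functions $B_1, B_{N_t}$ have $H^1([0,1])$-norms tending to zero is false: already for $n_t=1$ one has $B_1(t)=(1-N_t t)_+$, so $\|B_1'\|_{L^2}^2=N_t$ and $\|B_1\|_{H^1}\to\infty$. Hence your boundary-correction term need not vanish in $H^{1,2}$, and the argument as written breaks. The good news is that the correction is unnecessary. By definition $E_{N,\Omega}$ is finite exactly on the \emph{image} $\mathcal S_N\bigl(\Omega_{c_0,c_1}H^{1,2}\bigr)$, so for $c\in\Omega_{c_0,c_1}H^{1,2}\cap U$ the choice $c_N=\mathcal S_N c$ lies in that set tautologically; then $c_N\to c$ in $H^{1,2}$ by Lem.~\ref{lem:tensorspline} and $E_{N,\Omega}(c_N)=E(c_N)\to E(c)$ by Lem.~\ref{lem:continuity}. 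This is exactly the paper's one-line recovery argument. Once you replace your boundary-adjusted construction by $c_N=\mathcal S_N c$, your proof aligns with the paper's.
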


We refer to \cite[Definitions~4.1 and 7.6]{Maso1993} for the concepts of equi-coercivity and $\Gamma$-convergence and to \cite[Chapter~8]{Maso1993} for $\Gamma$-convergence under weak topologies. 

\begin{proof}
First we show that the functionals $E_{N,\Omega}$ are equi-coercive with respect to the weak topology on $H^{1,2}$. This means that for each $r>0$ there is a weakly compact set $K_r$ such that for each $N$, $\{c \in U: E_{N,\Omega}(c)\leq r^2 \}\subseteq K_r$. To see this let $r>0$ and $c \in U$ with $E_{N,\Omega}(c)\leq r^2 $. Then $E(c)\leq r^2 $ and consequently $L(c) \leq r$. Therefore, $\on{dist}(c(0),c(t))\leq r$ for all $t \in [0,1]$. By \cite[Prop.~3.5 and Lem.~4.2]{Bruveris2014b_preprint} there exist constants $C_1,C_2>0$ such that $\|h\|_{H^2}^2\leq C_1 G_{\tilde c}(h,h)$ and $\|c(0) - \tilde{c}\|_{H^2} \leq C_2 \on{dist}(c(0),\tilde{c})$ for all $\tilde c \in \mathcal I^2(S^1,\mathbb R^d)$ satisfying $\on{dist}(c(0),\tilde c)\leq r$ and all $h \in H^2$. Since we have full multiplicity at the ends we have $c(0) = S_{N_\theta}^{n_\theta}(c_0)$, by classical approximation results there exists a constant $C_3$ such that $\| S_{N_\theta}^{n_\theta}(c_0) \|_{H^2} \leq C_3 \|c_0\|_{H^2}$, where $C_3$ does not depend on $N_\theta$. This allows us to estimate
\begin{align*}
\|c\|_{H^{1,2}}^2 
&=
\int_0^1 \| c(t) \|_{H^2}^2 + \|\dot c(t)\|^2_{H^2} \ud t \\
&\leq
\| c(0) \|^2_{H^2} + C^2_2 r^2 + C_1 E(c) \\
&\leq
C^2_3\, \| c_0 \|^2_{H^2} + \left(C_2^2 + C_1\right) r^2 =:R_r^2\,.
\end{align*} 
This shows that $\{c \in U:E_{N,\Omega}(c)\leq r^2 \}$ is contained in the set $K_r$, defined as the closed ball of radius $R_r$ around the origin in $H^{1,2}$. Since closed balls in Hilbert spaces are weakly compact, the functionals $E_{N,\Omega}$ are equi-coercive.

Equi-coercivity allows us to apply \cite[Prop.~8.16]{Maso1993}, giving the following sequential characterization of $\Gamma$-convergence: the functionals $E_{N,\Omega}$ $\Gamma$-converge to $E_\Omega$ with respect to the weak $H^{1,2}$-topology, if 
\begin{align}\label{equ:gamma1}
\forall c\ \forall c_N &\rightharpoonup c: & E_\Omega(c)&\leq \liminf_{N\to\infty}E_{N,\Omega}(c_N)\,, 
\\\label{equ:gamma2}
\forall c\ \exists c_N &\rightharpoonup c: & E_\Omega(c) &= \lim_{N\to\infty}E_{N,\Omega}(c_N)\,.
\end{align}
To prove \eqref{equ:gamma1} let $c_N \rightharpoonup c$ in $H^{1,2}$. First we consider the case $c \in \Omega_{c_0,c_1}H^{1,2}$. Then $E(c) = E_\Omega(c)$. Notice that $E(c) \leq E_{N,\Omega}(c)$ always. It is shown in the proof of \cite[Thm~5.2]{Bruveris2014b_preprint} that $E$ is sequentially weakly lower semicontinuous, so we obtain
\begin{equation*}
E_\Omega(c) = E(c) \leq \liminf_{n \to \infty} E(c_N) \leq \liminf_{n \to \infty} E_{N,\Omega}(c_N)\,.
\end{equation*}
Now if $c \notin \Omega_{c_0,c_1}H^{1,2}$ then $c_N \notin \Omega_{c_0,c_1}H^{1,2}$ for almost all $N$, because the latter set is weakly closed. Thus we have both $E_{\Om}(c) = E_{N,\Om}(c) = \infty$ and \eqref{equ:gamma1} is satisfied.
Equation \eqref{equ:gamma2} follows directly from Lem. \ref{lem:continuity} by choosing the sequence $c_N = S^{n_\th}_{N_\th}(c)$. Thus, we have shown that the functionals $E_{N,\Omega}$ are equi-coercive and $\Gamma$-converge to $E_\Omega$ in the weak $H^{1,2}$ topology. The statement about the convergence of minimizers follows by applying \cite[Thm.~7.23]{Maso1993}.
\end{proof}

Note that the result concerns the energy functional restricted to spline spaces, but evaluation of integrals is assumed to be exact. For the case where we approximate the integrals by gaussian quadrature, we were not able to prove a $\Gamma$-convergence result. However, in numerical experiments we still observe convergence for the solution of the boundary value problem, as can be seen in Fig.~\ref{fig:geodesicdistance_conv} for varying numbers of control points. Convergence holds for both the optimal energy and the $L^2$-norm of the minimizing paths. In Fig.~\ref{fig:continuity} we show that the geodesic distance function is continuous: by adding a sinusoidal displacement in the normal direction to the curves, the geodesic distance converges to 0 as the noise becomes smaller. 

\subsection{Boundary value problem for unparameterized curves}
To numerically solve the boundary value problem on the space of unparametrized curves, we first have to discretize the diffeomorphism group.  
By the identification of $S^1$ with $\mathbb{R}/[0,2\pi]$, diffeomorphisms $\ph\colon S^1 \to S^1$ can be written as $\ph = \operatorname{Id} + f$, where  $f$ is a periodic function. Periodic functions can be discretized as before using simple knot sequences with periodic boundary conditions. This leads to the spline representation
\[
\ph(\theta) = \sum_{i=1}^{N_{\ph}} \ph_i D_i(\theta)=\sum_{i=1}^{N_{\ph}} \left(\xi_i + f_i\right) D_i(\theta)\,.
\]
Here $D_i$ are B-splines of degree $n_\ph$, defined on a uniform periodic knot sequence, 
$f_i$ are the control points of $f$, i.e., $f(\theta)=\sum_{i=1}^{N_\ph} f_i D_i(\theta)$, and 
$\xi_i$ are the \emph{Greville abscissas}, i.e., control points of the identity represented in a B-spline basis, $\operatorname{Id} = \sum_{i=1}^{N_\ph} \xi_i D_i$. 

\begin{figure}
\includegraphics[width=0.49\textwidth]{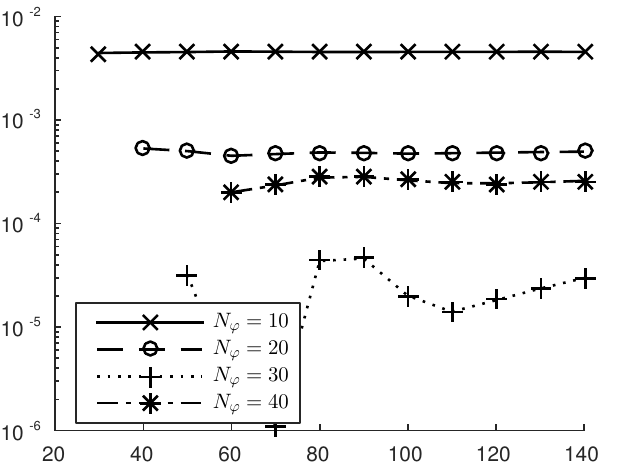}
\includegraphics[width=0.49\textwidth]{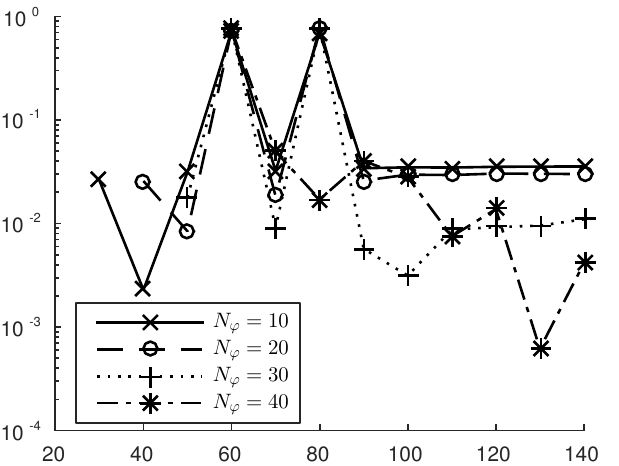}
\caption{Symmetry of the geodesic distances for the 3- and 4-bladed propeller shapes on the left and the corpus callosum shapes on the right. 
The relative difference $|\on{dist}(c_0, c_1) - \on{dist}(c_1, c_0)| / \on{dist}(c_0, c_1)$ is plotted against $N_\th$ for different choices of $N_\ph$.
}
\label{fig:symmetry}
\end{figure}

The constraint that $\varphi$ is a diffeomorphism is $\varphi' > 0$. By the fact that the B-spline basis functions are nonnegative and by the recursive formula for the derivatives of B-splines, see  \cite[Chap.~4]{schmeisser2007recent}, a sufficient condition to ensure that $\varphi' > 0$ is
\begin{equation}
f_{i-1} - f_i < \xi_{i} - \xi_{i-1}\,.
\label{eq:PhiDiffeoCondition}
\end{equation}  
This is a linear inequality constraint. To speed up convergence, we introduce an additional variable $\alpha \in \mathbb R$ representing constant shifts of the reparametrization. The resulting redundancy is eliminated by the constraint
\begin{equation}\label{eq: shift constraint}
	\sum_{i=1}^{N_\ph} f_i = 0\,,
\end{equation}
which ensures that the average shift of $\ph$ is 0.

We have to minimize the energy functional \eqref{eq:EnergyFunctional} over all paths $c\colon[0,1] \times [0,2\pi] \to \mathbb{R}^2$, and diffeomorphisms $\ph$, subject to the constraints
\begin{equation*}
	c(0, \cdot) = c_0\,, \qquad
	c(1, \cdot) = c_1 \circ \ph\,.
\end{equation*}
It is important to note that the reparametrization $(c, \ph) \mapsto c \circ \ph$ does not preserve splines: if $c_1$ and $\ph$ are represented by splines, then the function $c_1 \circ \ph$ is in general not. To overcome this difficulty 
we approximate the reparameterized curve $c_1 \circ \ph$ by a new spline in each optimization step. This then leads to a 
finite-dimensional constrained minimization problem
\begin{align}
\label{eq:constrainedEMinimization}
\on{argmin} E_{\on{discr}}(c_{2,1},\ldots,c_{N_{t}-1,N_{\theta}},f_1,\ldots,f_{N_\ph},\alpha)\,,
\end{align}
where $f_1,\ldots,f_{N_\ph}$ are the controls used to construct the diffeomorphism $\ph$ and $\alpha$ is the constant shift in the parametrization. Similar to the unconstrained problem \eqref{eq:unconstrainedEMinimization}, we can analytically compute the gradient and hessian and then solve this by standard methods for constrained minimization problems, specifically we use Matlab's \texttt{fmincon} function. In order to use \texttt{fmincon} we replace \eqref{eq:PhiDiffeoCondition} by $f_{i-1} - f_i \leq \xi_i - \xi_{i-1} + \ep$ with $\ep$ small.

From a mathematical point of view we would expect the geodesic distance between two shapes to be symmetric, i.e., interchanging the curves $c_0$ and $c_1$ should have no effect on the resulting geodesic distance. This is, however, only approximately true numerically: in our numerical examples the relative error is below 5\% if sufficiently many grid points are chosen, see Fig.~\ref{fig:symmetry}. We want to point out that Fig.~\ref{fig:symmetry} does not demonstrate convergence of the relative error to zero. Indeed, our numerical experiments do not confirm this. This problem has not received sufficient attention in some of the previous literature; in particular, \cite{Jermyn2011,Vialard2014_preprint,Esl2014} seem to sidestep this question.
An example of a forward and backward geodesic is plotted in Fig.~\ref{fig:symmetry_bvp}.

\subsection{Boundary value problem on shape space}
To numerically solve the boundary value problem on shape space, it remains to discretize the finite-dimensional motion group.  
To simplify the presentation we will assume in the following that $d=2$, so that we can parametrize rotations by the one-dimensional parameter $\beta$. We have to add translations and rotations to the minimization problem, i.e., minimize the energy functional \eqref{eq:EnergyFunctional} over all paths $c\colon[0,1] \times [0,2\pi] \to \mathbb{R}^2$, diffeomorphisms $\ph$, rotations $R_{\beta}$ and translations $a$, subject to the constraints
\begin{equation*}
	c(0, \cdot) = c_0\,, \qquad
	c(1, \cdot) = R_{\beta}(c_1 \circ \ph+a)\,.
\end{equation*}
Note that rotations and translations preserve splines: if $c_1$ is represented by a spline then for any rotation $R_{\beta}$ and translations $a$ the function $R_{\beta}(c_1+a)$ is 
a spline of the same type. 
This then leads to a 
finite-dimensional constrained minimization problem
\begin{align}
\on{argmin} E_{\on{discr}}(c_{2,1},\ldots,c_{N_{t}-1,N_{\theta}},f_1,\ldots,f_{N_\ph},\alpha,\beta,a)\,,
\end{align}
where $f_1,\ldots,f_{N_\ph}$ are the controls used to construct the diffeomorphism $\ph$, $\alpha$ is the constant shift in the parametrization, $\beta$ the rotation angle and $a$ the translation vector.

\begin{figure}
\centering
\includegraphics[width=0.80\textwidth]{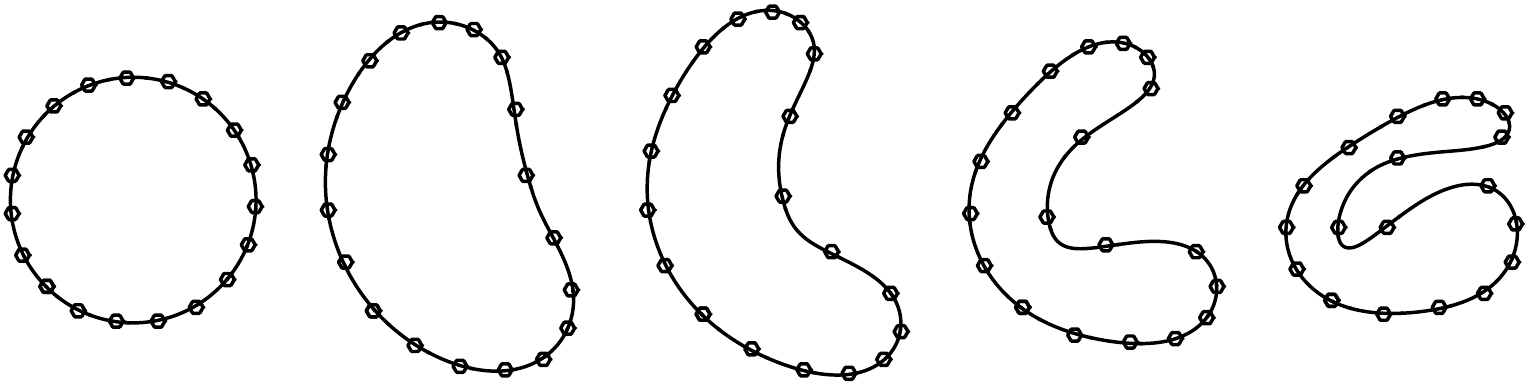}\hspace{0.02\textwidth}
\includegraphics[width=0.80\textwidth]{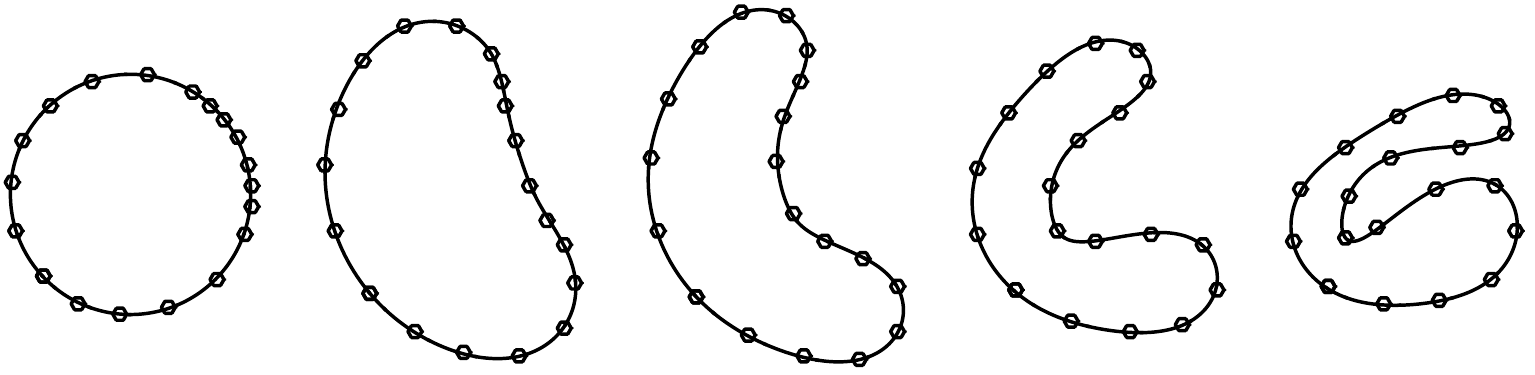}\hspace{0.02\textwidth}
\caption{Symmetry of the geodesic boundary value problem on the space of unparametrized curves. For the circle and the wrap the geodesic boundary value problem is solved forwards and backwards. 
To better compare the results the second geodesic is plotted backwards in time. The plot markers visualize the optimal parametrization of the curves.}
\label{fig:symmetry_bvp}
\end{figure}

\subsection{Initial value problem}

To solve the geodesic initial value problem we use the variational discrete geodesic calculus developed in \cite{Rumpf2014}. For a discrete path $(c_0, \dots, c_K)$, $K \in \mathbb N$, one defines the discrete energy 
\[
E_K(c_0, \dots, c_K) = K \sum_{k=1}^K W(c_{k-1}, c_k)\,,
\]
where $W(c, \tilde c)$ is an approximation of $\on{dist}(c, \tilde c)^2$. Since our Riemannian metric $G$ is smooth, it approximates the squared distance sufficiently well in the sense that $G_c(c-\tilde c,c-\tilde c)-\on{dist}(c,\tilde c)^2=O(\on{dist}(c,\tilde c)^3)$, and we can take the approximation to be
\[
W(c, \tilde c) = \frac 12 G_{c}(c - \tilde c , c - \tilde c)\,.
\]
We call $(c_0, \dots, c_K)$ a discrete geodesic if it is a minimizer of the discrete energy with fixed endpoints $c_0, c_K$. To define the discrete exponential map we consider discrete paths $(c_0, c_1, c_2)$ consisting of three points. The discrete energy of such a path is 
\[
E_2(c_0, c_1, c_2) = G_{c_0}(c_1 - c_0, c_1 - c_0) + G_{c_1}(c_2 - c_1, c_2 - c_1)\,.
\]
Given $c_0, c_1$, we define $c_2 = \on{Exp}_{c_0}c_1$ if $(c_0, c_1, c_2)$ is a discrete geodesic, i.e., if $c_2$ is such  that $c_1 = \on{argmin} E_2(c_0, \cdot, c_2)$. Given an initial curve $c_0$, an initial velocity $v_0$, and a number $K$ of time steps, our solution of the geodesic initial value problem is $c_K = \on{Exp}_{c_{K-2}}c_{K-1}$, where the intermediate points $c_1, \dots, c_{K-1}$ are defined iteratively via
\[
c_1 = c_0 + \frac 1K v_0\,,\quad
c_2 = \on{Exp}_{c_0}c_1\,,\quad
c_3 = \on{Exp}_{c_1}c_2\,,\quad\dots\,,\quad
c_{K-1} = \on{Exp}_{c_{K-3}}c_{K-2}\,.
\]

To compute a discrete geodesic we need to find minima of the function $E_2(c_0,\cdot,c_2)$, that is, we compute $c_2$ such that $c_1$ is a minimizer of $E_2(c_0,\cdot,c_2)$. Differentiating $E_2$ with respect to $c_1$ leads to the following system of nonlinear equations
\[
2G_{c_0}(c_1 - c_0, \cdot) - 2G_{c_1}(c_2 - c_1, \cdot) + D_{c_1}G_{\cdot}(c_2 - c_1, c_2 - c_1) = 0\,.
\]
This system has to be solved for $c_2$, with the argument replaced by all basis functions $C_j$ defining the spline space. We use the solver \texttt{fsolve} in Matlab to solve this system of equations. Some examples of discrete geodesics are depicted in Fig.~\ref{fig:hela_5pc}. The discretizations of the geodesic initial and boundary value problems are compatible as demonstrated in Fig.~\ref{fig:compatible}.

\begin{figure}
\includegraphics[width=0.49\textwidth]{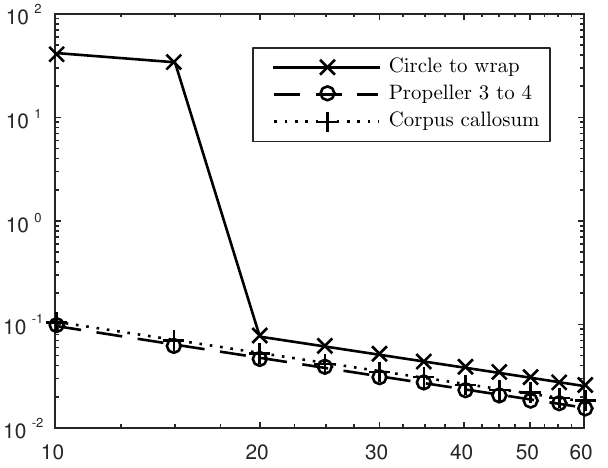}
\includegraphics[width=0.49\textwidth]{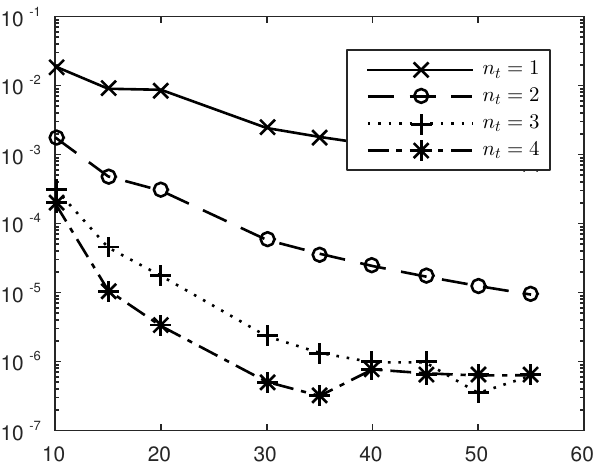}
\caption{Compatibility of the geodesic IVP and BVP with increasing $N_t$. On the left one computes $c_1 = \on{Exp}_{c_0}(v)$ for given $c_0, v$ and then solves the BVP for $\tilde v = \on{Log}_{c_0}(c_1)$. The plot shows the relative distance $\| v - \tilde v \|_{c_0} / \| v \|_{c_0}$ agains $N_t$. On the right one computes $v = \on{Log}_{c_0}(c_1)$ for given $c_0, c_1$ and plots the relative difference $\| v - \tilde v\|_{c_0} / \on{dist}(c_0,c_1)$ between two consecutive (w.r.t. $N_t$) initial velocities $v, \tilde v$ against $N_t$.
}\label{fig:compatible}
\end{figure}

\subsection{Karcher mean}

The Karcher mean $\overline{c}$ of a set $\{c_1,\dots,c_n\}$ of curves is the minimizer of
\begin{equation}
\label{eq: KarcherEnergy}
F(c) = \frac{1}{n} \sum_{j=1}^n \on{dist}(c, c_j)^2\,.
\end{equation}
It can be calculated by a gradient descent on $(\on{Imm}(S^1,\R^d), G)$. Letting $\on{Log}_{c}c_j$ denote the Riemannian logarithm, the gradient of $F$ with respect to $G$ is \cite{Pennec2006b}
\begin{equation}
\on{grad}^GF (c) = -\frac 2n \sum_{j=1}^n \on{Log}_c c_j\,.
\end{equation}
Fig.~\ref{fig:propellerMean} illustrates the computation of the Karcher mean of 8 propeller shapes, which have all been modified by adding a 10\% uniform noise to their control points. 
\begin{figure}
\centering
\includegraphics[width=0.99\textwidth]{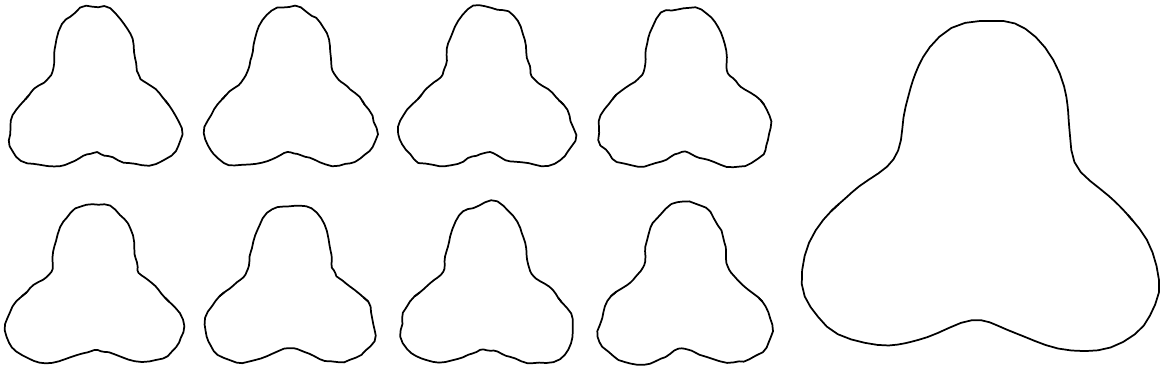}
\caption{Eight propellers with 10\% uniform noise added to their control points, along with their Karcher mean.}
\label{fig:propellerMean}
\end{figure}

\section{Shape analysis of HeLa cells}

We used second order metrics to characterize the nuclear shape variation in HeLa cells. The data consists of fluorescence microscope images of HeLa cell nuclei\footnote{The dataset was downloaded from \url{http://murphylab.web.cmu.edu/data}.} (87 images in total). The acquisition of the images is described in \cite{Boland2001}.

To extract the boundary of the nucleus we apply a thresholding method \cite{Otsu1979} to obtain a binary image, and then we fit -- using least squares -- a spline with $N_\th=12$ and $n_\th=4$ to the longest $4$-connected component of the thresholded image. This provides a good balance between capturing shape details and not overfitting the image noise; see Fig.~\ref{fig:hela_images}. Then we rescale all curves by the same factor to arrive at an average length $\bar\ell_c = 2\pi$. The choice $\bar\ell_c = 2\pi$ has the following nice property: if a curve $c$ has $\ell_c = 2\pi$ and $c$ has a constant speed parametrization, then $|c'| = 1$, and the arc length derivative $D_s h$ coincides with the regular derivative $h'$. The scaling matters because the metric we work with is not scale invariant. Had we decided to work with curves of a different average length we would have to change the constants $a_j$ of the metric in order to arrive at the same results. 

\begin{figure}
\centering
\includegraphics[width=0.8\textwidth]{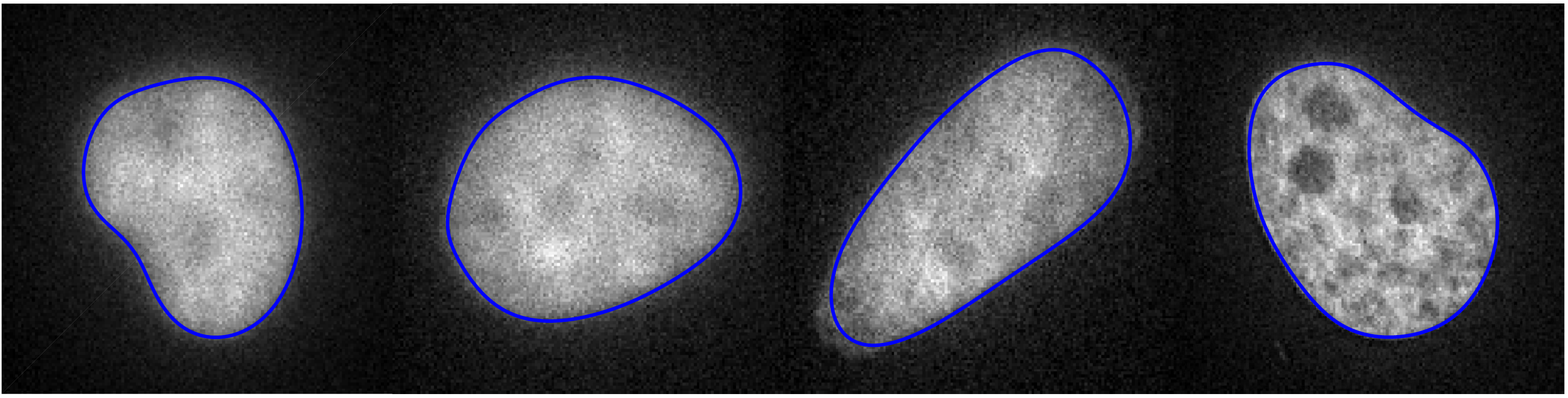}
\caption{Examples of HeLa cell nuclei and the spline representation of the boundary.}
\label{fig:hela_images}
\end{figure}

For the subsequent analysis we use splines with $N_\th=40$ and $n_\th=3$. The increased number of control points compared to the data acquisition allows us to preserve shape information even after reparametrizing the curves. To parametrize the diffeomorphism group we use splines with $N_\ph=20$ and $n_\ph=3$. This leaves us with roughly $2\cdot 40 - 20 - 2 - 1 = 57$ degrees of freedom to represent the population of $87$ given shapes of cell nuclei. The influence of the number of control points on the geodesic BVP can be seen in Fig.~\ref{fig:hela_Nth}. All analysis is performed modulo translations, rotations, and reparametrizations. 

\begin{figure}
\centering
\includegraphics[width=0.6\textwidth]{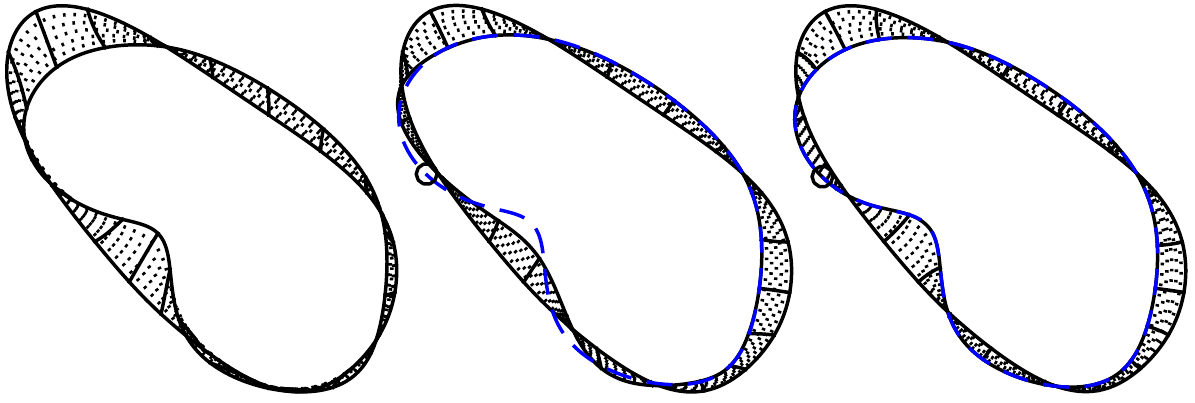}
\caption{Geodesic between two cells (solid lines); the dashed line shows the exact endpoint before reparametrization. The geodesic is computed between parametrized curves with $N_\th=12$ (left), unparametrized curves with $N_\th=12$ (middle) and $N_\th=40$ (right).}
\label{fig:hela_Nth}
\end{figure}

The choice of constants $a_0$, $a_1$, and $a_2$ of the Riemannian metric has a significant impact on the results; see Fig~\ref{fig:var_expl}. One constant may be chosen freely, so we set $a_0=1$. To simplify the interpretation of the results, we set $a_1=0$; our metric shall have no $H^1$-part. This leaves us with one more parameter, which we choose by looking at the $L^2$- and $H^2$-contributions to the energy of geodesics between shapes in the dataset. For a geodesic $c$ between two curves $c_0$ and $c_1$ these contributions are
\[
\on{dist}(c_0, c_1)^2 = E_{L^2}(c) + E_{H^2}(c) 
= \int_0^1 \int_{S^1} |c_t|^2 \ud s \ud t 
+ a_2 \int_0 \int_{S^1} \left| D_s^2 c_t\right|^2 \ud s \ud t\,.
\]
The relative contribution of the $H^2$-term to the total energy is $\rh_{H^2} = E_{H^2} / (E_{L^2} + E_{H^2})$. We denote the population mean and standard deviation of the variable $\rh_{H^2}$ by $\bar\rh_{H^2}$ and $\si$, respectively. The following table shows that the choices $a_2=2^{-12} \approx 0.00024$ and $a_2 = 2^{-8} \approx 0.0039$ both lead to balanced energy contributions of the zero and second order terms:
\begin{align*}
a_2 &= 2^{-12}\,, &
\bar\rh_{H^2} & = 0.032 \,,&
\si &= 0.027 \,,\\
a_2 &= 2^{-8} \,,&
\bar\rh_{H^2} & = 0.203 \,,&
\si &= 0.119 \,.
\end{align*}
We will use these parameter choices in our subsequent analysis. Note that from a physical point of view the parameter $a_2$ has units $[m^4]$, $m$ being meters.

The average shape of the nucleus can be captured by the Karcher mean $\bar c$. To solve the minimization problem \eqref{eq: KarcherEnergy} for the Karcher mean of the 87 nuclei we 
use a conjugate gradient method on the Riemannian manifold of curves as implemented in the Manopt library \cite{Manopt2014}. For each choice of parameters the optimization is performed until the gradient of the objective function $F(\bar c)$ satisfies $\|\on{grad}^G F(\bar c)\|_{\bar c} < 10^{-3}$.

Having computed the mean $\bar c$, we represent each nuclear shape $c_j$ by the initial velocity $v_j = \on{Log}_{\bar c}(c_j)$ of the minimal geodesic from $\bar c$ to $c_j$. We perform principal component analysis with respect to the inner product $G_{\bar c}$ on the set of initial velocities $\{v_j \,:\, j = 1,\dots,87\}$. Geodesics from the mean in the first five directions can be seen in Fig.~\ref{fig:hela_5pc}. A projection of the dataset onto the subspace spanned by the first two principal components is depicted in Fig.~\ref{fig:hela_2d}.

\begin{figure}
\centering
\includegraphics[width=0.8\textwidth]{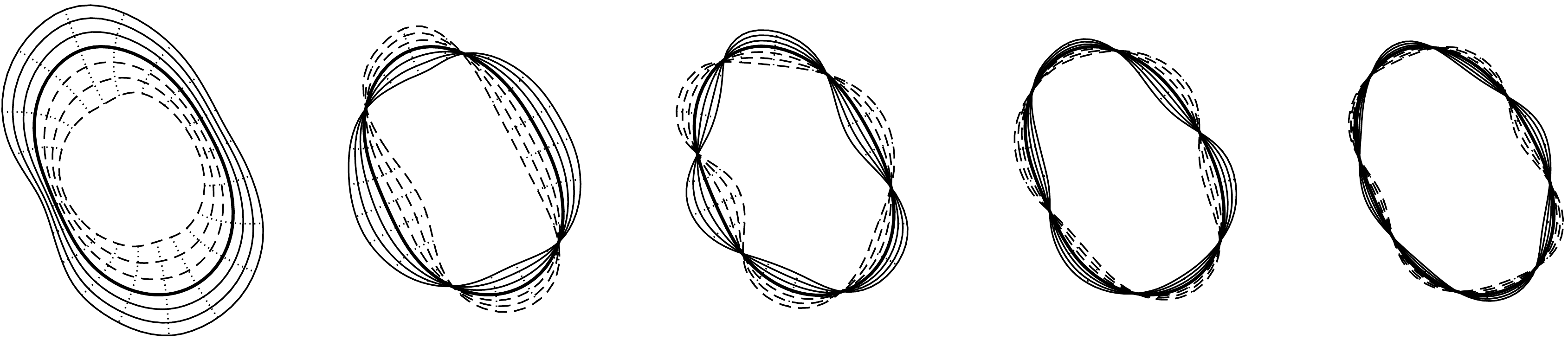} \\
\includegraphics[width=0.8\textwidth]{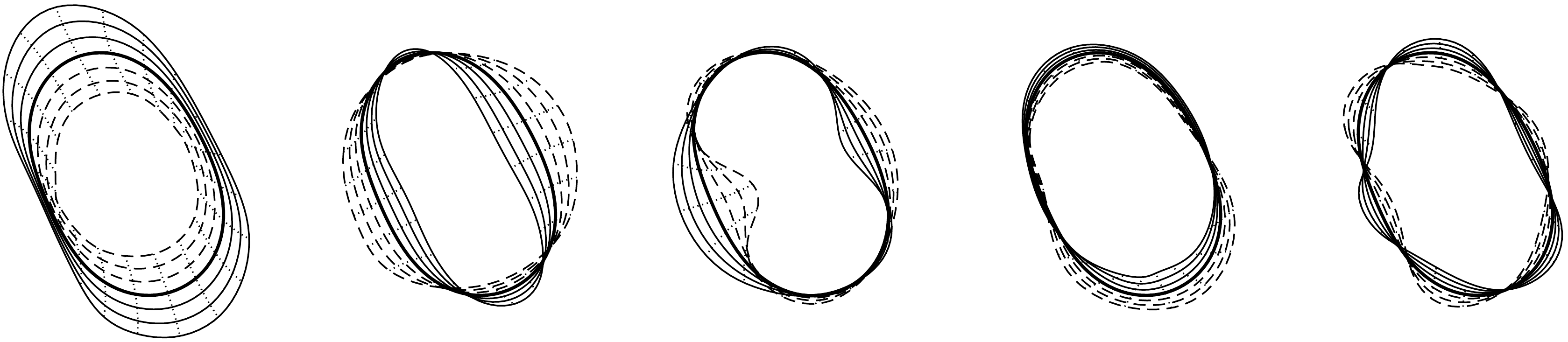}
\caption{Geodesics from the mean in the first five principal directions. The curves show geodesics at times $-3, -2, \dots, 2, 3$; the mean is shown in bold. One can see characteristic deformations of the curve: expansion, stretching, compressing and bending. The first row shows principal components calculated for curves modulo reparametrizations; the second row for parametrized curves.}
\label{fig:hela_5pc}
\end{figure}

\begin{figure}
\centering
\includegraphics[width=0.6\textwidth]{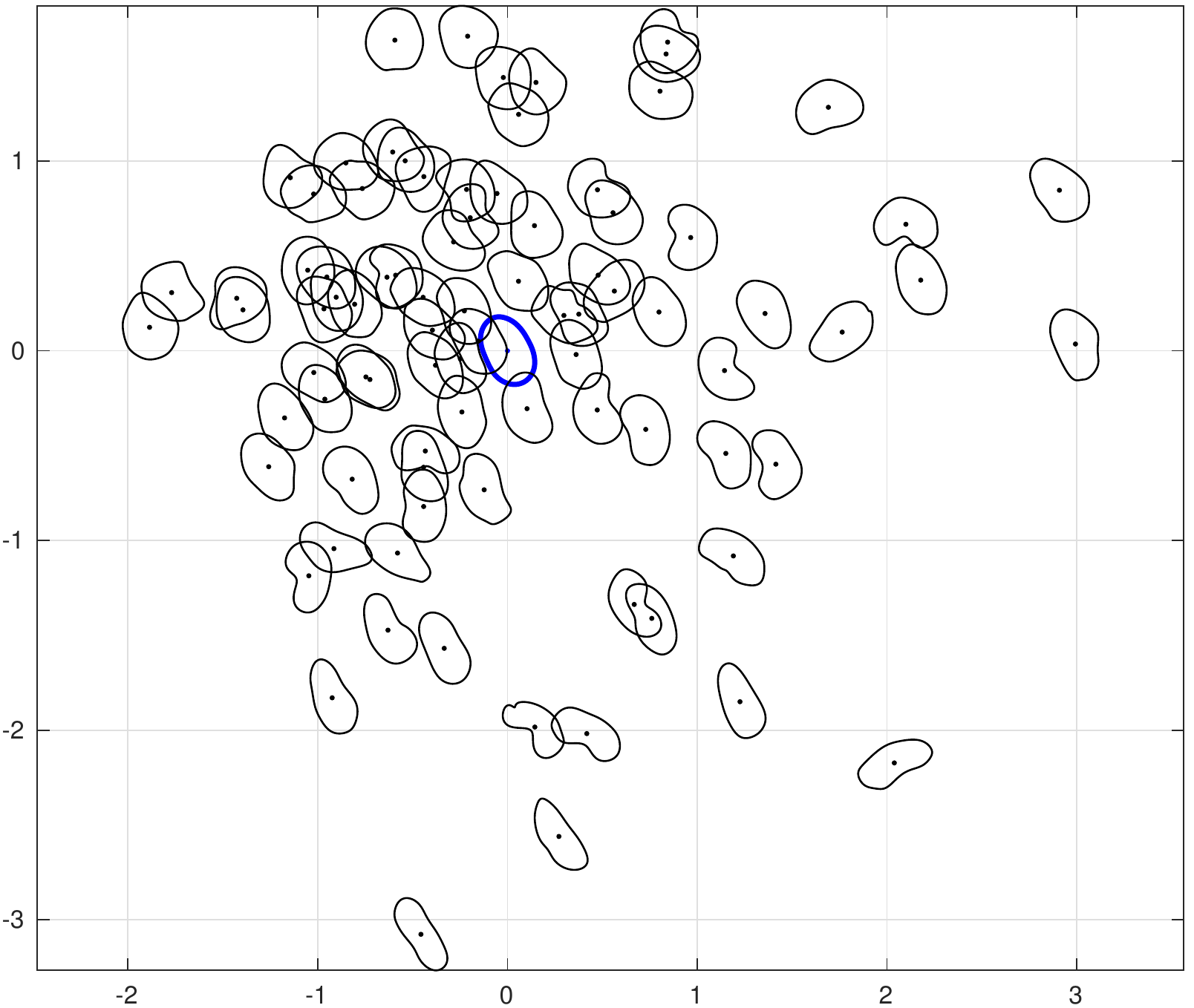}
\caption{Cell nuclei projected to the plane in the tangent plane, spanned by the first two principal components. The mean (in blue) is situated at the origin. The units on the coordinate axes are standard deviations.}
\label{fig:hela_2d}
\end{figure}

For unparametrized curves and for the parameter choice $a_2=2^{-12}$ the first five principal components explain $57.6\%$, $78.3\%$, $90.0\%$, $94.2\%$ and $98.0\%$ of the total variance; see Fig.~\ref{fig:var_expl}. Under the choice  $a_2=2^{-8}$ the first five principal components explain only $93.3\%$ of the variance as compared to $98.0\%$ in the previous case. This demonstrates that approximation power of the principal components depends on the choice of the metric. Fig.~\ref{fig:var_expl} also shows that fewer principal components are needed to explain the same amount of variance when the reparametrization group is factored out.

\begin{figure}
\centering
\raisebox{-.5\height}{\includegraphics[width=0.18\textwidth]{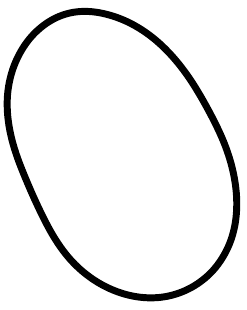}}
\hspace{0.01\textwidth}
\raisebox{-.5\height}{\includegraphics[width=0.15\textwidth]{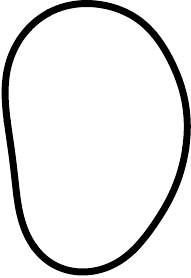}}
\hspace{0.02\textwidth}
\raisebox{-.5\height}{\includegraphics[width=0.49\textwidth]{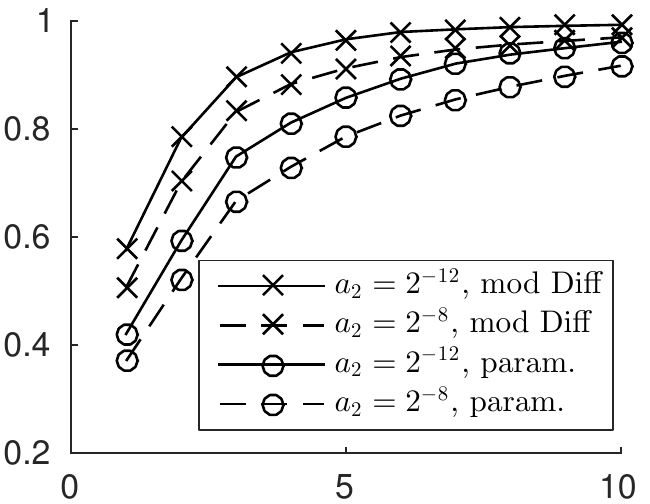}}
\caption{Left: the mean shape of cell nuclei. Middle: for comparison,  the mean shape as computed in \cite{Rohde2008} via the Christensen--Rabbitt--Miller method \cite{Christensen1996}. Right: the proportion of the total variance explained by the first 10 eigenvectors.}
\label{fig:var_expl}
\end{figure}

The results we obtain are comparable to those of \cite{Rohde2008}, where diffeomorphic matching was used to compare cells. It turns out that the mean shape with respect to our metrics is symmetric, while the mean shape obtained in \cite{Rohde2008} is bent towards one side; see Fig.~\ref{fig:var_expl}.

\appendix
\section{Convergence of spline approximations}
\label{app:convergence}

The Hilbert space tensor product $H^k([0,1])\widehat\otimes H^\ell(S^1)$ is the completion of the algebraic tensor product $H^k([0,1])\otimes H^\ell(S^1)$ with respect to the uniform cross norm 
\begin{align}
\beta\bigg(\sum_i f_i\otimes g_i\bigg)^2 = \sum_{i,j} \langle f_i,f_j\rangle_{H^k([0,1])} \langle g_i,g_j\rangle_{H^\ell(S^1)}\,.
\end{align}
The following result connects the mixed order Sobolev space \eqref{Eq:mixedorderSob} to a Hilbert space tensor product. 

\begin{lemma}\label{lem:tensor}
$H^{k,\ell}([0,1]\times S^1)$ is isometrically isomorphic to $H^k([0,1])\widehat\otimes H^\ell(S^1)$.
\end{lemma}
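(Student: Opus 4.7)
The plan is to define a natural map on elementary tensors by pointwise multiplication, verify it is an isometry there, extend by density to the Hilbert space tensor product, and finally show surjectivity using a Fourier expansion in the periodic variable.

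First I would define $\Phi_0 \colon H^k([0,1]) \otimes H^\ell(S^1) \to H^{k,\ell}([0,1]\times S^1)$ on elementary tensors by $\Phi_0(f \otimes g)(t,\theta) = f(t) g(\theta)$ and extend by linearity. For a pure tensor $f \otimes g$, the weak mixed partials are $f^{(k)}(t)g(\theta)$, $f(t)g^{(\ell)}(\theta)$, and $f^{(k)}(t)g^{(\ell)}(\theta)$, all of which are $L^2$ on the product by Fubini. A direct calculation then gives
\begin{align*}
\|\Phi_0(f\otimes g)\|_{H^{k,\ell}}^2
&= \|f\|_{L^2}^2 \|g\|_{L^2}^2 + \|f^{(k)}\|_{L^2}^2 \|g\|_{L^2}^2 + \|f\|_{L^2}^2 \|g^{(\ell)}\|_{L^2}^2 + \|f^{(k)}\|_{L^2}^2 \|g^{(\ell)}\|_{L^2}^2 \\
&= \|f\|_{H^k}^2 \|g\|_{H^\ell}^2,
\end{align*}
which matches $\beta(f\otimes g)^2$. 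By polarization and bilinearity, the same identity yields $\langle \Phi_0(\sum_i f_i \otimes g_i), \Phi_0(\sum_j \tilde f_j \otimes \tilde g_j)\rangle_{H^{k,\ell}} = \sum_{i,j} \langle f_i,\tilde f_j\rangle_{H^k} \langle g_i,\tilde g_j\rangle_{H^\ell}$ for finite sums, so $\Phi_0$ is an isometry from the algebraic tensor product into $H^{k,\ell}$. By standard completion arguments this extends uniquely to an isometric linear map $\Phi \colon H^k([0,1]) \widehat\otimes H^\ell(S^1) \to H^{k,\ell}([0,1]\times S^1)$.

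It remains to show $\Phi$ is surjective, and this is the main obstacle. The cleanest route is via the Fourier basis on the circle. Let $e_n(\theta) = (2\pi(1+n^{2\ell}))^{-1/2} e^{in\theta}$, which is an orthonormal basis of $H^\ell(S^1)$. Given $F \in H^{k,\ell}([0,1]\times S^1)$, I would define the Fourier-in-$\theta$ coefficients $F_n(t) = \int_{S^1} F(t,\theta) \overline{e_n(\theta)} \ud\theta$. Using Parseval in $\theta$ pointwise almost everywhere in $t$ and integrating in $t$, together with the corresponding identities applied to the weak partials $F^{(k,0)}$, $F^{(0,\ell)}$, $F^{(k,\ell)}$ (which are legitimate since $F \in H^{k,\ell}$), one obtains the key identity
\begin{equation*}
\|F\|_{H^{k,\ell}}^2 = \sum_{n \in \mathbb Z} (1+n^{2\ell}) \bigl( \|F_n\|_{L^2([0,1])}^2 + \|F_n^{(k)}\|_{L^2([0,1])}^2 \bigr) = \sum_{n} \|F_n\|_{H^k}^2 \cdot \|e_n\|_{H^\ell}^{-2} \cdot \|e_n\|_{H^\ell}^{2},
\end{equation*}
so that in particular each $F_n \in H^k([0,1])$ and $\sum_n \|F_n \otimes e_n\|^2_{H^k \widehat\otimes H^\ell} = \|F\|_{H^{k,\ell}}^2 < \infty$. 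Thus $\sum_n F_n \otimes e_n$ converges in $H^k \widehat\otimes H^\ell$, and its image under $\Phi$ is $F$, because partial sums $\sum_{|n|\le N} F_n(t) e_n(\theta)$ converge to $F$ in $H^{k,\ell}$ by the same norm identity applied to the tail. This proves surjectivity and completes the isometric isomorphism.

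The only delicate step is justifying the term-by-term differentiation and the applicability of Parseval to all four mixed derivatives; I would handle this by first approximating $F$ by smooth functions (using standard mollification in $t$ and trigonometric polynomials in $\theta$, which is routine on $[0,1]\times S^1$), verifying the identity for smooth $F$, and passing to the limit via the continuity of all operations involved in the $H^{k,\ell}$ norm.
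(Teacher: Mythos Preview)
Your proof is correct in outline and takes a genuinely different route from the paper for surjectivity. The paper argues abstractly: since the map is an isometry its range is closed, so it suffices to show the orthogonal complement of the range is trivial; this is done by testing against all pure tensors $f\otimes g$ and using the factorization $\langle c, f\otimes g\rangle_{H^{k,\ell}} = \langle f, \langle c, g\rangle_{H^\ell}\rangle_{H^k}$. Your argument is constructive: Fourier-expand in the periodic variable and exhibit an explicit preimage $\sum_n F_n\otimes e_n$. The paper's route is shorter and does not use any special structure of $S^1$; yours yields an explicit series representation and makes the tensor structure very concrete, at the cost of tracking normalizations and justifying term-by-term differentiation (which you correctly flag).

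One slip to fix: with your definition $F_n(t)=\int_{S^1} F(t,\theta)\,\overline{e_n(\theta)}\,d\theta$ (the $L^2$ pairing against the $H^\ell$-normalized exponentials), the element $\sum_n F_n\otimes e_n$ does \emph{not} map to $F$ under $\Phi$; each mode is off by a factor $(1+n^{2\ell})$, and your displayed norm identity is correspondingly wrong. What you want are the $H^\ell$-Fourier coefficients $F_n(t)=\langle F(t,\cdot), e_n\rangle_{H^\ell}$, which for the exponentials equal $(1+n^{2\ell})\int_{S^1} F(t,\theta)\,\overline{e_n(\theta)}\,d\theta$. With that correction both the norm identity $\|F\|_{H^{k,\ell}}^2=\sum_n\|F_n\|_{H^k}^2$ and the reconstruction $\Phi\bigl(\sum_n F_n\otimes e_n\bigr)=F$ go through cleanly.
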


A similar result for $H^{k,\ell}(\R\times \R)$ is shown in \cite[Thm.~2.1]{sickel2009tensor}. Our proof follows the lines of \cite[Thm.~1.39]{light1985approximation}, where the result is shown for the case $k=\ell=0$.

\begin{proof}
Each tensor $c=\sum_i f_i\otimes g_i \in H^k([0,1]) \otimes H^\ell(S^1)$ defines a function $Jc \in H^{k,\ell}([0,1]\times S^1)$, 
via  $Jc(t_1,t_2) = \sum_i f_i(t_1) g_i(t_2)$. It is not hard to verify that $J$ is an isometric 
embedding of $H^k([0,1]) \otimes H^\ell(S^1)$ in $H^{k,\ell}([0,1]\times S^1)$, i.e., $\beta(c) = \|c\|_{H^{k,\ell}([0,1]\times S^1)}$. 
To complete the proof, we show that $J$ is onto. Being an isometry, the range of $J$ is closed and so it suffices to show that its orthogonal complement is trivial in $H^{k,\ell}([0,1]\times S^1)$. Let $c \in H^{k,\ell}([0,1]\times S^1)$ 
and suppose that $\langle c, f\otimes g\rangle_{H^{k,\ell}([0,1]\times S^1)}=0$ for all $f \in H^k([0,1])$ and $g\in H^\ell(S^1)$. Let $\langle c,g\rangle_{H^\ell(S^1)}$ 
denote the function $t_1\mapsto \int_{S^1} c(t_1,t_2)g(t_2)dt_2$. Then $\langle c,g\rangle_{H^\ell(S^1)} \in H^k(S^1)$ with $\partial_{t_1}^k \langle c,g\rangle_{H^\ell(S^1)} = \langle \partial_{t_1}^k c,g\rangle_{H^\ell(S^1)}$. It follows that
\begin{align*}
\langle c,f\otimes g\rangle_{H^{k,\ell}([0,1]\times S^1)}
=
\langle f,\langle c, g\rangle_{H^\ell(S^1)}\rangle_{H^k([0,1])} = 0\,.
\end{align*}
As $f$ is arbitrary, it follows that $\langle c,g\rangle_{H^\ell(S^1)}$ 
vanishes at almost every $t_1$. Similarly, since $g$ is arbitrary, $c$ vanishes  at almost every $t_1, t_2$.
Therefore, $c=0$ in $H^{k,\ell}([0,1]\times S^1)$.
\end{proof}

\begin{corollary}\label{cor:simpletensors}
The multiplicatively decomposable functions $(t, \theta) \mapsto f(t)g(\theta) = (f \otimes g)(t, \theta)$ with $f \in H^k([0,1])$, $g\in H^\ell(S^1)$, span a dense subspace of $H^{k,\ell}([0,1]\times S^1)$.  
\end{corollary}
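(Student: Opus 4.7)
The plan is to deduce the corollary as a direct consequence of Lemma~\ref{lem:tensor}. By construction, the Hilbert space tensor product $H^k([0,1])\widehat\otimes H^\ell(S^1)$ is defined as the completion of the algebraic tensor product $H^k([0,1])\otimes H^\ell(S^1)$ with respect to the uniform cross norm $\beta$. Hence the algebraic tensor product is dense in $H^k([0,1])\widehat\otimes H^\ell(S^1)$, and since the algebraic tensor product is by definition the linear span of elementary tensors $f\otimes g$, these elementary tensors span a dense subspace of the completion.

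Next I would transport this density statement along the isometric isomorphism $J$ from Lemma~\ref{lem:tensor}. Since $J$ is a linear isometry onto $H^{k,\ell}([0,1]\times S^1)$, it maps dense sets to dense sets. The image of the elementary tensor $f\otimes g$ under $J$ is exactly the multiplicatively decomposable function $(t,\theta)\mapsto f(t)g(\theta)$. Therefore the span of such decomposable functions is dense in $H^{k,\ell}([0,1]\times S^1)$.

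There is essentially no obstacle here; the only subtlety is to make sure the corollary is stated in the right function space, namely that $f\in H^k([0,1])$ and $g\in H^\ell(S^1)$ yield $f\otimes g\in H^{k,\ell}([0,1]\times S^1)$ with the expected norm identity $\|f\otimes g\|_{H^{k,\ell}} = \|f\|_{H^k}\|g\|_{H^\ell}$. This is already contained in the isometry statement $\beta(c) = \|Jc\|_{H^{k,\ell}([0,1]\times S^1)}$ established in the proof of Lemma~\ref{lem:tensor}, so no additional work is required.
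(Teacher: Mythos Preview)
Your proof is correct and follows exactly the same route as the paper's one-line argument: density of the algebraic tensor product in its completion, transported to $H^{k,\ell}$ via the isometric isomorphism of Lemma~\ref{lem:tensor}. The paper just states this more tersely.
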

\begin{proof}
This follows from the denseness of $H^k([0,1])\otimes H^\ell(S^1)$ in $H^k([0,1])
\operatorname{\widehat \otimes} H^\ell(S^1)$ and Lem.~\ref{lem:tensor}.
\end{proof}

The following lemma shows that the Sobolev embedding theorem in one dimension extends to higher dimensions via tensor products. 

\begin{lemma}\label{lem:embedding}
For each $k,\ell\geq 0$, the space $H^{k+1,\ell+1}([0,1]\times S^1)$ is continuously embedded in the space $C^{k,\ell}([0,1]\times S^1)$.
\end{lemma}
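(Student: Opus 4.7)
The plan is to exploit the Hilbert tensor-product identification $H^{k+1,\ell+1}([0,1] \times S^1) \cong H^{k+1}([0,1]) \widehat\otimes H^{\ell+1}(S^1)$ supplied by Lem.~\ref{lem:tensor}, together with the classical one-dimensional Sobolev embeddings $H^{k+1}([0,1]) \hookrightarrow C^k([0,1])$ and $H^{\ell+1}(S^1) \hookrightarrow C^\ell(S^1)$. The key observation is that point-evaluation of each mixed partial derivative can be realized as a tensor product of bounded linear functionals, and a Hilbert tensor product of two bounded functionals is itself bounded, with operator norm at most the product of the factor norms.

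Concretely, for $0 \leq a \leq k$ and $t_0 \in [0,1]$, the evaluation $\delta_{t_0}^a\colon f \mapsto f^{(a)}(t_0)$ is a bounded linear functional on $H^{k+1}([0,1])$ whose norm is bounded by a constant $C_1$ independent of $t_0$, by the one-dimensional Sobolev embedding. Similarly $\eta_{\theta_0}^b\colon g \mapsto g^{(b)}(\theta_0)$ is bounded on $H^{\ell+1}(S^1)$ by a constant $C_2$, uniformly in $\theta_0$ and $0 \leq b \leq \ell$. Representing these functionals via Riesz and applying Cauchy--Schwarz against the cross norm $\beta$ defining the Hilbert tensor product, one checks that $\delta_{t_0}^a \otimes \eta_{\theta_0}^b$ extends uniquely to a bounded functional on $H^{k+1}([0,1]) \widehat\otimes H^{\ell+1}(S^1)$ of norm at most $C_1 C_2$. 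On a simple tensor $f \otimes g$, this extended functional returns precisely $\partial_t^a \partial_\theta^b (f \otimes g)(t_0, \theta_0) = f^{(a)}(t_0)\,g^{(b)}(\theta_0)$.

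To finish, let $c \in H^{k+1,\ell+1}$ and choose, via Cor.~\ref{cor:simpletensors}, a sequence $c_n$ of finite linear combinations of simple tensors converging to $c$ in the $H^{k+1,\ell+1}$-norm. Each $\partial_t^a \partial_\theta^b c_n$ is a continuous function on $[0,1] \times S^1$, and the uniform bound above shows that this sequence is Cauchy in $C^0$, hence converges to a continuous function $g_{a,b}$ with $\|g_{a,b}\|_{C^0} \leq C_1 C_2 \|c\|_{H^{k+1,\ell+1}}$. On the other hand, $\partial_t^a \partial_\theta^b c_n \to \partial_t^a \partial_\theta^b c$ in $L^2$, using the standard one-dimensional fact that $\|f\|_{L^2}+\|f^{(k+1)}\|_{L^2}$ controls $\|f^{(a)}\|_{L^2}$ for $a \leq k+1$, applied separately in each variable to show that all intermediate mixed derivatives of $c_n-c$ tend to zero in $L^2$. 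Uniqueness of limits then identifies $g_{a,b}$ as a continuous representative of $\partial_t^a \partial_\theta^b c$, and taking the maximum over $a \leq k$, $b \leq \ell$ yields the desired continuous embedding.

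The main obstacle is really just a careful bookkeeping issue: verifying that the Hilbert tensor-product structure of Lem.~\ref{lem:tensor} makes point-evaluation functionals tensor-product-compatible, and that all intermediate mixed derivatives up to order $(k+1,\ell+1)$ in fact lie in $L^2$ (not only the three explicitly required ones in \eqref{Eq:mixedorderSob}). Both are standard facts about Hilbert tensor products and one-dimensional Sobolev spaces, respectively; once these are noted, the argument reduces to a routine density extension.
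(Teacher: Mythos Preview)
Your proof is correct and takes essentially the same approach as the paper: both exploit the tensor-product identification of Lem.~\ref{lem:tensor} together with the one-dimensional Sobolev embeddings $H^{k+1}\hookrightarrow C^k$ and $H^{\ell+1}\hookrightarrow C^\ell$. The only difference is organizational---the paper expands $c$ in a tensor-product orthonormal basis and bounds the sup-norm of mixed derivatives coefficient-wise, whereas you phrase the same Hilbert-space estimate via tensor products of bounded point-evaluation functionals followed by a density argument---but these are equivalent formulations of the same idea.
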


\begin{proof}
Let $\{f_i\}$ be an orthonormal basis of $H^{k+1}([0,1])$ and $\{g_j\}$ an orthonormal basis of $H^{\ell+1}(S^1)$. Then $\{f_i\otimes g_j\}$ is an orthonormal basis of $H^{k+1}([0,1])\widehat\otimes H^{\ell+1}(S^1)$. By Lem.~\ref{lem:tensor} this space is equal to $H^{k+1,\ell+1}([0,1]\times S^1)$. Therefore, any element in this space can be expressed as $c=\sum_{i,j} c_{i,j} f_i\otimes g_j$ for some $c_{i,j}\in\mathbb R$. By the Sobolev embedding theorem in one dimension there exists $C>0$ such that
\begin{align*}
\|\partial_t^{k}\partial_\theta^l c\|_\infty^2
&= 
\left\|\sum_{ij}c_{ij}(\partial_t^k f_i)\otimes(\partial_\theta^l g_j)\right\|_\infty^2
\leq 
\sum_{ij} c_{ij}^2 \|\partial_\theta^k f_i\|^2_\infty \|\partial_t^l g_j\|^2_\infty
\\&\leq 
C \sum_{ij} c_{ij}^2 \|f_i\|_{H^{k+1}([0,1])}^2 \|g_j\|_{H^{\ell+1}(S^1)}^2
= C \|c\|_{H^{k+1,\ell+1}([0,1]\times S^1)}^2\,.
\end{align*}
Similar estimates hold for lower derivatives of $c$. This shows that the $C^{k,\ell}$-norm is bounded by the $H^{k+1,\ell+1}$-norm. 
\end{proof}

To prove Lem.~\ref{lem:tensorspline} we need a result on the approximation power of one-dimensional splines.

\begin{lemma}\label{lem:onesplines}
Let $I=[0,1]$ or $I=S^1$, $n,k \in \mathbb N$ with $n \geq k$, and $f \in H^{k}(I)$. Then
\begin{equation*}
\lim_{N\to\infty} \| f - \mathcal S^n_{N}f \|_{H^{k}(I)} = 0\,.  
\end{equation*}
\end{lemma}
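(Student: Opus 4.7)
The plan is a standard density-plus-stability argument, leveraging the classical approximation power of B-splines on quasi-uniform knot sequences. First I would note that smooth functions are dense in $H^k(I)$, so for any $f\in H^k(I)$ and any $\varepsilon>0$ there exists $g\in C^\infty(I)$ (periodic if $I=S^1$) with $\|f-g\|_{H^k(I)}<\varepsilon$. This reduces the proof to the two ingredients described next.

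The first ingredient is the well-known approximation estimate for B-splines of degree $n$ on a uniform knot sequence with spacing $h=1/N$: for each $g\in H^{n+1}(I)$ there exists a quasi-interpolant $Q_N g$ in the spline space satisfying
\begin{equation*}
\|g-Q_N g\|_{H^j(I)} \leq C_{n,j}\,h^{n+1-j}\,\|g\|_{H^{n+1}(I)}, \qquad 0\leq j\leq n+1,
\end{equation*}
as given in \cite{Schumaker2007}. Since $\mathcal S^n_N$ is the $H^n$-orthogonal projection onto the spline space, it is optimal in the $H^n$-norm, hence for smooth $g$,
\begin{equation*}
\|g-\mathcal S^n_N g\|_{H^k(I)} \leq \|g-\mathcal S^n_N g\|_{H^n(I)} \leq \|g-Q_N g\|_{H^n(I)} \to 0
\end{equation*}
as $N\to\infty$, using $k\leq n$ for the first inequality.

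The main obstacle is to control the residual $\|\mathcal S^n_N(f-g)\|_{H^k(I)}$, which requires uniform boundedness of $\mathcal S^n_N$ as an operator from $H^k(I)$ to $H^k(I)$ (with a constant independent of $N$) together with a canonical extension of $\mathcal S^n_N$ from $H^n(I)$ to $H^k(I)$. A priori $\mathcal S^n_N$ is merely a contraction in the $H^n$-norm, and this does not automatically carry over to a weaker Sobolev norm. I would establish the stability by combining two classical B-spline facts on a quasi-uniform mesh: the uniform $L^2$-stability of the normalized B-spline basis (de Boor), $\|\sum_i c_i B_i\|_{L^2}\sim h^{1/2}\|(c_i)\|_{\ell^2}$ with constants independent of $N$, and the inverse inequality $\|s\|_{H^n(I)}\leq C h^{-n}\|s\|_{L^2(I)}$ valid for every spline $s$ of degree $n$. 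Combined via Sobolev interpolation between $L^2$ and $H^n$ and with the dual characterisation of the $H^n$-projection, these give $\|\mathcal S^n_N\|_{H^k(I)\to H^k(I)}\leq M$ with $M$ independent of $N$, and also provide the required extension by density.

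Once the uniform bound $M$ is in hand, the triangle inequality
\begin{equation*}
\|f-\mathcal S^n_N f\|_{H^k(I)} \leq \|f-g\|_{H^k(I)} + \|g-\mathcal S^n_N g\|_{H^k(I)} + \|\mathcal S^n_N(g-f)\|_{H^k(I)}
\end{equation*}
bounds the right-hand side by $(1+M)\varepsilon+\|g-\mathcal S^n_N g\|_{H^k(I)}$, so that $\limsup_{N\to\infty}\|f-\mathcal S^n_N f\|_{H^k(I)}\leq (1+M)\varepsilon$, and letting $\varepsilon\to 0$ concludes the argument.
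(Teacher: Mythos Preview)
Your proposal is correct, but the paper takes a markedly shorter route. Both start by approximating $f$ in $H^k$ by a smooth function $g$. At that point, however, the paper does not split into three terms or invoke any operator stability. It simply quotes the one-dimensional spline approximation result (Schumaker) to produce, for $N$ large, a spline $h$ with $\|g-h\|_{H^k(I)}<\varepsilon/2$, and then applies the best-approximation property of the orthogonal projection in a single stroke:
\[
\|f-\mathcal S^n_N f\|_{H^k(I)}\;\le\;\|f-h\|_{H^k(I)}\;\le\;\|f-g\|_{H^k(I)}+\|g-h\|_{H^k(I)}<\varepsilon\,.
\]
So the ``main obstacle'' you identify---the uniform $H^k\!\to\!H^k$ bound on $\mathcal S^n_N$, with its attendant inverse inequalities and interpolation---never arises in the paper's argument.

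What your longer route buys is consistency with the stated definition of $\mathcal S^n_N$. The paper defines $\mathcal S^n_N$ as the $H^n$-orthogonal projection, yet its final inequality uses the best-approximation property in the $H^k$-norm, which is literally the defining property of the $H^k$-projection; it also applies $\mathcal S^n_N$ to an $f$ that is only assumed to lie in $H^k$. Your density-plus-stability argument resolves both points: the uniform bound lets you extend $\mathcal S^n_N$ from $H^n$ to $H^k$ and then control $\|\mathcal S^n_N(f-g)\|_{H^k}$ directly. The cost is that the stability step you sketch is itself a nontrivial piece of spline theory and would need to be written out in full.
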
 

\begin{proof}
The set of smooth functions in dense in $H^k(I)$. Therefore, there is for each $\ep >0$ a function $g \in C^\infty(I)$ such that $\|f-g\|_{H^k(I)}<\ep/2$. If $N$ is sufficiently large, there is a spline $h$ of order $n$ defined on the uniform grid with $N$ points such that $\|g-h\|_{H^k(I)}<\ep/2$. This follows from \cite[Cor.~6.26]{Schumaker2007}. By the best approximation property of the orthogonal projection $\mathcal S^n_N$,  
\begin{equation*}
\|f-\mathcal S^n_Nf\|_{H^k(I)}
\leq \|f-h\|_{H^k(I)} 
\leq \|f-g\|_{H^k(I)} +\|g-h\|_{H^k(I)} <\ep\,.
\end{equation*}
Since $\ep$ was arbitrary, this shows that $\mathcal S^n_N f \to f$ in $H^n(I)$ as $N\to \infty$.
\end{proof} 	

Collecting these results we are able to prove Lem.~\ref{lem:tensorspline}.

\color{black}{\mbox{\color{header1}{\it Proof of Lem.~\ref{lem:tensorspline}}.$\,$}\color{black}}
Let $c\in H^{k,\ell}([0,1]\times S^1)$ and $\ep > 0$. By Cor.~\ref{cor:simpletensors} there exist functions $f_i\in H^{k}([0,1])$ and $g_i \in H^{\ell}(S^1)$, $i=1,\dots,n$, such that 
\begin{equation*}
\left\|c-\sum_{i=1}^{n}f_i\otimes g_i\right\|_{H^{k,\ell}([0,1]\times S^1)}<\ep/2\,.
\end{equation*}
By Lem.~\ref{lem:onesplines} and by the fact that the tensor norm is a reasonable cross norm (i.e., $\| f_i\otimes g_i \|_{H^{k,\ell}([0,1]\times S^1)}\leq \| f_i \|_{H^{k}([0,1])}\| g_i \|_{H^{\ell}(S^1)}$) it is possible to choose $N_t$ and $N_\theta$ large enough such that
\begin{equation*}
\left\|\sum_{i=1}^n f_i\otimes g_i-\sum_{i=1}^n \mathcal S^{n_t}_{N_t}f_i \otimes \mathcal S^{n_\theta}_{N_\theta}g_i\right\|_{H^{k,\ell}([0,1]\times S^1)}<\ep/2\,.
\end{equation*}
These two estimates and the best approximation property of the orthogonal projection $\mathcal S^{n_t,n_\theta}_{N_t,N_\theta}$ yield
\begin{equation*}
\left\|c-\mathcal S^{n_t,n_\theta}_{N_t,N_\theta}c\right\|_{H^{k,\ell}([0,1]\times S^1)}
\leq 
\left\|c-\sum_{i=1}^n \mathcal S^{n_t}_{N_t}f_i \otimes \mathcal S^{n_\theta}_{N_\theta}g_i\right\|_{H^{k,\ell}([0,1]\times S^1)} 
<\ep\,.\qquad{\color{header1}\rule{1.5ex}{1.5ex}}
\end{equation*}

\section{Derivatives of the energy functional}
In this appendix we list the derivatives of the energy 
functional~\eqref{eq:EnergyFunctional}. The first derivative is
\label{sec:AppendixEnergyDerivatives}
\begin{align*}
dE_c(k) = \int_0^1 \int_0^{2\pi} &t_1 \ip{c'}{k'} + t_2 \left( \ip{c''}{k'} + \ip{c'}{k''} \right)  + t_3 \ip{\dot{c}}{\dot{k}} + t_4 \ip{\dot{c}'}{\dot{k}'} \\
&  + t_5 ( \ip{\dot{c}''}{\dot{k}'} + \ip{\dot{c}'}{\dot{k}''} ) + t_6 \ip{\dot{c}''}{\dot{k}''} 
 \ud \theta \ud t\,,
\end{align*}
with
\begin{align*}
t_1 &=  \frac{a_0}{|c'|} \ip{\dot{c}}{\dot{c}} -
\frac{a_1}{|c'|^3} \ip{\dot{c}'}{\dot{c}'}
 - 7\frac{a_2}{|c'|^9} \ip{c'}{c''}^2 \ip{\dot{c}'}{\dot{c}'}
 + 10\frac{a_2}{|c'|^7} \ip{c'}{c''} \ip{\dot{c}'}{\dot{c}''}
 - 3\frac{a_2}{|c'|^5} \ip{\dot{c}''}{\dot{c}''} \,,\\
t_2 &=  2\frac{a_2}{|c'|^7} \ip{c'}{c''} \ip{\dot{c}'}{\dot{c}'}
 - 2\frac{a_2}{|c'|^5} \ip{\dot{c}'}{\dot{c}''}\,, \quad
t_3 = 2a_0 |c'| \,, \quad
t_4 = 2\frac{a_1}{|c'|} + 2\frac{a_2}{|c'|^7} \ip{c'}{c''} \,,\\
t_5 &= - 2\frac{a_2}{|c'|^5} \ip{c'}{c''} \,, \quad
t_6 = 2\frac{a_2}{|c'|^3}\,. 
\end{align*}
The Hessian is
\begin{align*}
d^2 E_c(h,k) &= \int_0^1 \int_0^{2\pi} w_1 \ip{c'}{h'} \ip{c'}{k'} \\
& + w_2 \left( \ip{c''}{h'}\ip{c'}{k'} + \ip{c'}{h'}\ip{c''}{k'} + \ip{c'}{h''}\ip{c'}{k'} + \ip{c'}{k''}\ip{c'}{h'} \right) \\
& + w_3 ( \ip{c''}{h'}\ip{c''}{k'} + \ip{c'}{h''}\ip{c'}{k''} + \ip{c'}{h''}\ip{c''}{k'} + \ip{c'}{k''}\ip{c''}{h'} )  \\
& + w_4 ( \ip{\dot{c}}{\dot{h}}\ip{c'}{k'} + \ip{\dot{c}}{\dot{k}}\ip{c'}{h'}  )
 + w_5 ( \ip{\dot{c}'}{\dot{h}'}\ip{c'}{k'} + \ip{\dot{c}'}{\dot{k}'}\ip{c'}{h'} ) \\
& + w_6 ( \ip{\dot{c}''}{\dot{h}'}\ip{c'}{k'} + \ip{\dot{c}''}{\dot{k}'}\ip{c'}{h'} + \ip{\dot{c}'}{\dot{h}''}\ip{c'}{k'} + \ip{\dot{c}'}{\dot{k}''}\ip{c'}{h'} ) \\
& + w_7 ( \ip{\dot{c}'}{\dot{h}'}\ip{c''}{k'} + \ip{\dot{c}'}{\dot{k}'}\ip{c''}{h'} + \ip{\dot{c}'}{\dot{h}'}\ip{c'}{k''} + \ip{\dot{c}'}{\dot{k}'}\ip{c'}{h''} ) \\
& + w_8 \left( \ip{\dot{c}''}{\dot{h}'}\ip{c''}{k'} + \ip{\dot{c}''}{\dot{k}'}\ip{c''}{h'} + \ip{\dot{c}'}{\dot{h}''}\ip{c''}{k'} + \ip{\dot{c}'}{\dot{k}''}\ip{c''}{h'} \right. \\
&\quad\qquad + \left. \ip{\dot{c}''}{\dot{h}'}\ip{c'}{k''} + \ip{\dot{c}''}{\dot{k}'}\ip{c'}{h''} + \ip{\dot{c}'}{\dot{h}''}\ip{c'}{k''} + \ip{\dot{c}'}{\dot{k}''}\ip{c'}{h''} \right) \\
& + w_9 ( \ip{\dot{c}''}{\dot{h}''}\ip{c'}{k'} + \ip{\dot{c}''}{\dot{k}''}\ip{c'}{h'} ) \\
& + t_1 \ip{h'}{k'} 
+ t_2 \left( \ip{h''}{k'} + \ip{h'}{k''} \right)
+ t_3 \ip{\dot{h}}{\dot{k}}
+ t_4 \ip{\dot{h}'}{\dot{k}'}  \\
& + t_5 \left( \ip{\dot{h}''}{\dot{k}'} + \ip{\dot{h}'}{\dot{k}''} \right)
+ t_6 \ip{\dot{h}''}{\dot{k}''} \ud \theta \ud t\,,
\end{align*}
with
\begin{align*}
w_1 &= -a_0 \frac{1}{|c'|}\ip{\dot{c}}{\dot{c}} + a_1 \frac{3}{|c'|^5}\ip{\dot{c}'}{\dot{c}'} + a_2 \frac{63}{|c'|^{11}}\ip{c'}{c''}^2\ip{\dot{c}'}{\dot{c}'} \\
&\qquad - a_2 \frac{70}{|c'|^9}\ip{c'}{c''}\ip{\dot{c}'}{\dot{c}''} + a_2 \frac{15}{|c'|^7}\ip{\dot{c}''}{\dot{c}''}  \,,\\ 
w_2 &= - a_2 \frac{14}{|c'|^9}\ip{c'}{c''}\ip{\dot{c}'}{\dot{c}'} + a_2 \frac{10}{|c'|^7}\ip{\dot{c}'}{\dot{c}''}\,, \quad
w_3 = a_2 \frac{2}{|c'|^7}\ip{\dot{c}'}{\dot{c}'} \,, \quad
w_4 = a_0 \frac{2}{|c'|} \,,\\
w_5 &= -a_1 \frac{2}{|c'|^3} - a_2 \frac{14}{|c'|^9}\ip{c'}{c''}^2\,, \quad
w_6 = a_2 \frac{10}{|c'|^7}\ip{c'}{c''}\,, \quad
w_7 = a_2 \frac{4}{|c'|^7}\ip{c'}{c''} \,,\\
w_8 &= - a_2 \frac{2}{|c'|^5} \,, \quad
w_9 = - a_2 \frac{6}{|c'|^5}\,.
\end{align*}

\subsection*{Acknowledments}

We thank Peter Michor, Jens Gravesen, Hermann Schichl, and the participants of the Math on the Rocks workshop in Grundsund for helpful discussions and valuable comments. We are grateful to the anonymous referees for their careful reading of our manuscript. 

\bibliographystyle{siam}

\end{document}